\date{May 16, 2013}
\newtheorem{theorem}{Theorem}
\newtheorem{proposition}[theorem]{Proposition}
\newtheorem{lemma}[theorem]{Lemma}
\newtheorem{corollary}[theorem]{Corollary}
\theoremstyle{definition}
\theoremstyle{remark}
\newtheorem{remark}{Remark}
\numberwithin{equation}{section}
\newcommand{\lk}{\left(}
\newcommand{\rk}{\right)}
\newcommand{\tr}{\textnormal{Tr}}
\newcommand{\N}{\mathbb{N}}
\newcommand{\R}{\mathbb{R}}
\newcommand{\Sph}{\mathbb{S}}
\newcommand{\Le}{L^{(1)}_{s,d}}
\newcommand{\Lz}{L^{(2)}_{s,d}}
\newcommand{\Lzt}{\tilde L^{(2)}_{s,d}}
\DeclareMathOperator{\ran}{ran}
\DeclareMathOperator{\re}{Re}
\begin{document}

\title[Refined Semiclassical Asymptotics]{Refined Semiclassical Asymptotics for Fractional Powers of the Laplace Operator}

\author{Rupert L. Frank}
\address{Rupert L. Frank, Department of Mathematics, Princeton University, Washington Road, Princeton, NJ 08544, USA; Current address: Mathematics 253-37, Caltech, Pasadena, CA 91125, USA}
\email{rlfrank@math.princeton.edu, rlfrank@caltech.edu}

\author{Leander Geisinger}
\address{Leander Geisinger, Department of Mathematics, Universit\"at Stuttgart, Pfaffenwaldring 57, 70569 Stuttgart, Germany; Current address: Department of Mathematics, Princeton University, Washington Road, Princeton, NJ 08544, USA}
\email{leander.geisinger@mathematik.uni-stuttgart.de, leander@princeton.edu}

\thanks{\copyright\, 2013 by the authors. This paper may be reproduced, in its entirety, for non-commercial purposes.}

\begin{abstract}
We consider the fractional Laplacian on a domain and investigate the asymptotic behavior of its eigenvalues. Extending methods from semi-classical analysis we are able to prove a two-term formula for the sum of eigenvalues with the leading (Weyl) term given by the volume and the subleading term by the surface area. Our result is valid under very weak assumptions on the regularity of the boundary.
\end{abstract}


\maketitle

\section{Introduction and main result}
\label{sec:int}


\subsection{Introduction}

In this paper we study the asymptotic behavior of eigenvalues for fractional
powers of the Laplacian. The operator $(-\Delta)^s$ with $0<s<1$ appears in
numerous fields of mathematical physics, mathematical biology and mathematical finance and
has attracted a lot of attention recently. The key difference between this
operator and the usual Laplacian is the non-locality of $(-\Delta)^s$, which
allows one to model long-range interactions in applications and leads to
challenging mathematical problems.

{F}rom a probabilistic point of view, the fractional Laplacian of order $s$ on a
domain $\Omega\subset\R^d$ can be defined as the generator of the $2s$-stable
process killed upon exiting $\Omega$. A more operator theoretic definition,
which we employ here, is in terms of the quadratic form
\begin{equation}\label{eq:quadform}
 C_{s,d} \, \int_{\R^d} \int_{\R^d} \frac{|u(x)- u(y)|^2}{|x-y|^{d+2s}} dx
\, dy  
= \int_{\R^d} |p|^{2s} | \hat u(p) |^2 \, dp \,,
\end{equation}
restricted to functions $u\in H^s(\R^d)$ which satisfy $u\equiv 0$ in
$\R^d\setminus\overline\Omega$. Here $H^s(\R^d)$ is the Sobolev space of order
$s$, $\hat u(p) = (2\pi)^{-d/2} \int e^{-ip\cdot x} u(x)\,dx$ is the Fourier
transform of $u$ and $C_{s,d}$ is an explicit constant
given in \eqref{eq:csd}. The identity in \eqref{eq:quadform} is an easy consequence of Plancherel's theorem.

For bounded domains $\Omega$ the spectrum of the fractional
Laplacian is discrete and we denote its eigenvalues (in increasing order,
repeated according to multiplicities) by $\lambda_n^{(s)}$. Our main result in
this paper is a two-term asymptotic expansion of the sum of these eigenvalues,
\begin{equation}
 \label{eq:mainintro}
\frac1N \sum_{n=1}^N \lambda_n^{(s)} = C_{d,s}^{(1)} |\Omega|^{-2s/d}\ N^{2s/d}
+ C_{d,s}^{(2)} |\partial\Omega| |\Omega|^{-(d-1+2s)/d}\ N^{(2s-1)/d}\ (1+o(1))
\quad \mathrm{as}\ N\to\infty \,.
\end{equation}
Here $|\Omega|$ and $|\partial\Omega|$ denote the $d$-dimensional measure of
$\Omega$ and the $(d-1)$-dimensional surface measure of $\partial\Omega$,
respectively, and $C_{d,s}^{(1)}$ and $C_{d,s}^{(2)}$ are positive, universal
constants, depending only on $d$ and $s$, for which we shall obtain explicit expressions. Our result is valid for non-smooth
domains, requiring only that $\partial\Omega\in C^{1,\alpha}$ for some
(arbitrarily small) $\alpha>0$. It is remarkable that, despite the fact that we
are dealing with a non-local operator, both coefficients in \eqref{eq:mainintro}
have a local form, depending only on $\Omega$ and $\partial\Omega$, just like
in the case of the Laplacian. This will become clearer from the reformulation
given in Theorem \ref{thm:main} below.

In order to avoid confusion, we emphasize that the fractional Laplacian of order $s$ on a domain $\Omega$ is different from
the Dirichlet Laplacian on $\Omega$ raised to the $s$-th power. For the
Dirichlet Laplacian, and hence for its fractional powers, asymptotics analogous
to \eqref{eq:mainintro} are well-known. One of our results is that, while the
first terms in \eqref{eq:mainintro} coincide for both operators, the second
terms do \emph{not}. This means, in particular, that our result cannot be
obtained from the study of the (local) Dirichlet Laplacian, and that our
analysis needs to take into account the non-locality inherent in
\eqref{eq:mainintro}. For further results about the relation between the
fractional Laplacain on a domain and the fractional power of the Dirichlet
Laplacian we refer to \cite{CheSon05}; see also Section \ref{sec:snd} below.

The one-term asymptotics $\lambda_N^{(s)} = \frac{d+2s}d \ C_{d,s}^{(1)}
|\Omega|^{-2s/d}\ N^{2s/d}(1+o(1))$, which is a fractional version of Weyl's
law, is a classical result of Blumenthal and Getoor \cite{BluGet59}. More
recently, Ba{\~n}uelos and Kulczycki \cite{BanKul08} and Ba{\~n}uelos, Kulczycki
and Siudeja \cite{BaKuSi09} have shown a two-term asymptotic formula for
$\sum_{n=1}^\infty \exp(-t\lambda_n^{(s)})$ as $t\to0$. Note that
$\sum_{n=1}^\infty \exp(-t\lambda_n^{(s)})$ and $N^{-1} \sum_{n=1}^N
\lambda_n^{(s)}$ correspond to the Abel and Ces\`aro summation of the sequence
$\lambda_n^{(s)}$, respectively. As is well-known, asymptotics of Ces\`aro means
imply asymptotics of Abel means, but not vice versa. Hence for $C^{1,\alpha}$
domains we recover and improve upon the result of \cite{BanKul08,BaKuSi09}.

This is, actually, a significant improvement since our asymptotics are no longer
derived for the infinitely smooth function $e^{-tE}$ of the fractional
Laplacian, but, as we shall see shortly, for the Lipschitz function
$(\Lambda-E)_+$.
Moreover, since we are no longer able to apply the probabilistic machinery
available for the partition function, we have to find new and
more robust tools. Our methods also work for the ordinary Dirichlet
Laplacian on a bounded domain, and in \cite{FraGei11a} we use the techniques
developed here to give an elementary and short proof of two-term asymptotics in
that case.

Another point in which we go beyond \cite{BanKul08,BaKuSi09} is that we give an expression for the constant $C_{d,s}^{(2)}$ in \eqref{eq:mainintro} in terms of a model operator on a \emph{half-line} instead of a model operator on a \emph{half-space}. In this way our expression is similar to familiar two-term formulas in semi-classical analysis; see, for instance, \cite{SaVa}. This is possible due to some recent beautiful results of Kwa\'snicki \cite{Kwasni10a} about a general class of half-line operators.

We find it convenient to prove \eqref{eq:mainintro} in an equivalent form,
namely,
\begin{equation}
 \label{eq:mainintroramp}
\sum_{n=1}^\infty \lk \Lambda - \lambda_n^{(s)} \rk_+ 
=  \Le \, |\Omega| \, \Lambda^{1+d/2s} - \Lz \, |\partial \Omega|
\, \Lambda^{1+(d-1)/2s} (1+o(1))
\quad \mathrm{as}\ \Lambda\to\infty \,.
\end{equation}
Here $x_+:=\max\{x,0\}$ denotes the positive part of a number $x$. (The fact
that \eqref{eq:mainintro} and \eqref{eq:mainintroramp} are equivalent is
well-known to experts in the field, but we include a short proof in the appendix
for the sake of completeness, see Lemma \ref{asymptequiv}.) Note also that
\eqref{eq:mainintroramp} can be rewritten as
\begin{equation}
 \label{eq:mainintrosc}
\sum_{n=1}^\infty \lk 1 - h^{2s} \lambda_n^{(s)} \rk_+
=  \Le \, |\Omega| \, h^{-d} - \Lz \, |\partial \Omega|
\, h^{-d+1} (1+o(1))
\quad \mathrm{as}\ h\to0+ \,,
\end{equation}
and this is the form in which we shall state and prove our main theorem. The
small parameter $h$ has the interpretation of Planck's constant and
\eqref{eq:mainintrosc} emphasizes the semi-classical nature of the problem.

Our approach extends the multiscale analysis to the fractional setting. By this
we mean that we localize simultaneously on different length scales according to
the distance from the boundary. Of
course, a main difficulty when dealing with our non-local operator comes from
the treatment of the localization error. At this point we have to improve upon
previous results from \cite{LieYau88,SoSoSp10}. Another major impass, as
compared
to the local case, is the analysis of a one-dimensional model operator for which
an (almost) explicit diagonalization is far from trivial. This is where
Kwa\'snicki's work \cite{Kwasni10a} enters. It requires, however, still
substantial work to bring these results into a form which is useful for us.
We will explain the strategy of
our proof in more detail in Subsection \ref{ssec:strategy} after a precise
statement of our main result.

Throughout this paper we assume that the dimension $d\geq 2$. In the
one-dimensional case (the fractional Laplacian on an interval) considerably
stronger results are known \cite{KKMS10,Kwasni10b}. The powerful methods
developed there are, however, intrinsically one-dimensional and seem of little
help in the multi-dimensional case. The question raised in \cite{BaKuSi09} of
whether an analogue of Ivrii's two-term asymptotics \cite{Ivrii80a} holds for
$\lambda_n^{(s)}$ in $d\geq 2$ without Abel or Ces\`aro averaging remains a
challenging open problem.


\subsection{Main Result}
\label{ssec:res}

Let $\Omega\subset\R^d$, $d\geq 2$, be a bounded open set.  For $h > 0$ and $0 < s < 1$ let
$$
H_\Omega = (-h^2 \Delta)^s - 1
$$
be the self-adjoint operator in $L^2(\Omega)$ generated by the quadratic form
$$
\lk u , H_\Omega u \rk \, = \,  \int_{\R^d} \lk |hp|^{2s}-1 \rk | \hat u(p) |^2 \, dp 
$$
with form domain
$$
\mathcal{H}^s(\Omega) \, = \, \left\{ u \in H^s(\R^d) \, : \, u \equiv 0 \ \textnormal{on} \ \R^d \setminus \overline \Omega \right\} \, .
$$
For $0 < s < 1$ we have the representation
$$
\lk u , H_\Omega u \rk \, = \,  C_{s,d} \, h^{2s} \int_{\R^d} \int_{\R^d} \frac{|u(x)- u(y)|^2}{|x-y|^{d+2s}} dx \, dy  - \int_\Omega |u(x)|^2 \, dx \, 
$$
with constant
\begin{equation}
\label{eq:csd}
C_{s,d} \, = \, 2^{2s-1} \pi^{-d/2} \frac{\Gamma(d/2+s)}{|\Gamma(-s)|} \, > \, 0
\, .
\end{equation}

Our main results hold without any global geometric conditions on $\Omega$. We
only require weak  smoothness conditions on the boundary - namely that the
boundary belongs to the class $C^{1,\alpha}$ for some $\alpha > 0$. That is, the
local charts of $\partial \Omega$ are differentiable and the
derivatives are H\"older continuous with exponent $\alpha$.

\begin{theorem}
\label{thm:main}
Let $0<s<1$ and assume that the boundary of $\Omega$ satisfies $\partial \Omega
\in C^{1,\alpha}$ with some $0 < \alpha \leq 1$. Then
\begin{equation}
\label{eq:main}
\tr (H_\Omega)_- \, =  \, \Le \, |\Omega| \, h^{-d} - \Lz \, |\partial \Omega|
\, h^{-d+1} + R_h
\end{equation}
with $R_h=o(h^{-d+1})$ as $h\to 0+$. Here
\begin{equation}
\label{eq:weylconst}
\Le \, = \, \frac{1}{(2\pi)^d} \int_{\R^d} \lk |p|^
{2s}-1 \rk_- \, dp
\end{equation}
and the positive constant $\Lz$ is given in (\ref{eq:mod:lz}).

More precisely, we have the lower bound $R_h \geq -C h^{-d+1+\epsilon_-}$ for
any
$$
0< \epsilon_- \, < \, \left\{ \begin{array}{ll} 
\frac{\alpha}{\alpha+2} & \textnormal{if} \ 1/2 \leq s < 1 \,, \\
\frac{2s\alpha}{\alpha+1+2s} & \textnormal{if} \ 0 < s < 1/2 \,,
\end{array} \right.
$$
and the upper bound $R_h \leq C h^{-d+1+\epsilon_+}$ for
any
\begin{align*}
& 0< \epsilon_+ < \frac{\alpha}{\alpha+2} \qquad \textnormal{if} \ 1-d/4 \leq s
< 1 \,, \\
& 0< \epsilon_+ \leq \frac{\alpha(2s-1+d/2)}{\alpha+2s+d/2}  \qquad
\textnormal{if} \ 0< s < 1-d/4 \,.
\end{align*}
\end{theorem}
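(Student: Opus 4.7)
The plan is to prove \eqref{eq:main} via a multiscale localization adapted to the boundary, which splits $\tr (H_\Omega)_-$ into an interior contribution producing the Weyl term $\Le |\Omega| h^{-d}$ and a boundary contribution producing the subleading term $-\Lz |\partial\Omega| h^{-d+1}$. Throughout I would work with the variational characterization of $\tr(H_\Omega)_-$: a lower bound on this trace is obtained from trial density matrices built from localized coherent states, and an upper bound from a quadratic-form lower bound on $H_\Omega$ combined with a Berezin-type inequality. A first ingredient is an IMS-type non-local localization formula: for any smooth partition $\sum_j \phi_j^2 \equiv 1$,
$$
\bigl\langle u, (-h^2\Delta)^s u \bigr\rangle \;=\; \sum_j \bigl\langle \phi_j u, (-h^2\Delta)^s \phi_j u \bigr\rangle \;-\; \mathcal{E}[u],
$$
where $\mathcal{E}[u]$ is a non-local quadratic form controlled pointwise by $\int |u(x)|^2 L_h(x)\,dx$ with $L_h(x) = C_{s,d} h^{2s} \int \sum_j |\phi_j(x)-\phi_j(y)|^2 |x-y|^{-d-2s}\,dy$. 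The key improvement over \cite{LieYau88,SoSoSp10} is a scale-sensitive pointwise bound on $L_h$ that gains both when $|x-y|$ is small (via the smoothness of $\phi_j$) and when $|x-y|$ is large (via support separation).

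Next I would introduce a regularized distance $\ell(x) \asymp \min\{\mathrm{dist}(x,\partial\Omega),\, h^\rho\}$ for a parameter $\rho\in(0,1)$ to be optimized, and build a Whitney-type cube cover of $\Omega$ with cubes of side $\sim \ell(x_j)$ together with a subordinate partition satisfying $|\partial^\beta \phi_j| \lesssim \ell(x_j)^{-|\beta|}$. On each interior cube $Q_j$, at distance $\gtrsim \ell(x_j)$ from $\partial\Omega$, the operator $\phi_j H_\Omega \phi_j$ coincides with $\phi_j\bigl((-h^2\Delta)^s_{\R^d}-1\bigr)\phi_j$ up to a controllable non-local tail, and standard bulk semiclassical analysis (Berezin for the upper bound, coherent states for the lower) produces a contribution of size $\Le |Q_j| h^{-d}$ per cube, with error governed by $h/\ell(x_j)$. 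Summing over interior cubes reproduces $\Le\,|\Omega|\,h^{-d}$ modulo a remainder that is absorbed into the boundary-layer contribution.

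For boundary cubes $Q_j$, I would flatten $\partial\Omega$ locally using a $C^{1,\alpha}$ chart: the defining diffeomorphism differs from an affine map by $O(\ell(x_j)^{1+\alpha})$, and under it $(-h^2\Delta)^s$ on $\Omega$ transfers to $(-h^2\Delta)^s$ on the half-space $\R^d_+$ (with Dirichlet condition on $\R^d\setminus\overline{\R^d_+}$) up to a correspondingly small perturbation. The Fourier transform in the $d-1$ tangential variables then fibers the half-space operator into a one-parameter family of operators on the half-line $\R_+$ indexed by the tangential momentum $\xi$. For each $\xi$, Kwa\'snicki's results in \cite{Kwasni10a} provide the spectral information needed to compute $\tr(\cdot)_-$ of the fiber; integrating over $\xi$ and summing over boundary cubes produces $-\Lz\,|\partial\Omega|\,h^{-d+1}$ and simultaneously identifies the constant $\Lz$.

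The final step is to balance the scale parameter $\rho$. The interior semiclassical error and the non-local localization error both grow as $\rho$ increases (cubes shrink, the boundary layer thickens), while the error from straightening $\partial\Omega$ decreases like $h^{\rho\alpha}$. Optimizing $\rho$ against these competing constraints yields the stated exponents $\epsilon_\pm$; the split into the two subcases $s\geq 1/2$ and $s<1/2$ in the lower bound corresponds to different regimes in which the non-local localization kernel $L_h$ can be controlled, and the asymmetry between the upper and lower bounds reflects the different tools used in each direction. In my view the main obstacle is the boundary analysis: because $(-\Delta)^s$ is genuinely non-local, the straightening of $\partial\Omega$ and the passage to the half-space model must be carried out so that long-range interactions between distant boundary pieces do not spoil the estimate; and translating Kwa\'snicki's diagonalization into an expression for the half-line $\tr(\cdot)_-$ sufficiently uniform in $\xi$ to integrate against the tangential momentum, and matching the resulting constant with $\Lz$, is the technical heart of the argument.
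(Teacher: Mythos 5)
Your proposal matches the paper's strategy in all essential respects: multiscale localization with a length scale $l(u)$ adapted to the distance from $\partial\Omega$, an IMS-type non-local localization identity together with a refined (two-regime) estimate on the localization error, Berezin--Li--Yau and trial-density-matrix bounds in the bulk, local $C^{1,\alpha}$ straightening reducing the boundary analysis to the half-space $\R^d_+$, the tangential Fourier transform fibering the half-space operator into Kwa\'snicki's half-line model $A^+$ to identify the constant $\Lz$, and a final optimization of the scale parameter $l_0\sim h^\beta$. The only technical deviation is that the paper uses a \emph{continuous} family of coherent states $(\phi_u)_{u\in\R^d}$ with the resolution of identity $\int\phi_u^2(x)\,l(u)^{-d}\,du=1$ (following Solovej--Spitzer) rather than a discrete Whitney cube partition, which streamlines the bookkeeping but is not a substantive difference.
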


We do not claim that our remainder estimates are sharp. They show, however, that our methods are rather explicit and they correctly reflect the intuitive fact that the estimate worsens as the boundary gets rougher. We also mention that for not too small $s$ we (almost) get the same remainder estimate $h^{-d+1+\alpha/(\alpha+2)}$ that our method yields in the local case $s=1$ \cite{FraGei11a}.

In Section \ref{sec:snd} we will derive several representations of the constant $\Lz$ in \eqref{eq:main}. One of these, which emphasizes the semi-classical nature of the problem, leads to a rewriting of \eqref{eq:main} as
\begin{equation}
\label{eq:main2}
\tr (H_\Omega)_-  =   \iint_{T^*\Omega} \lk |p|^
{2s}-1 \rk_- \frac{dp dx}{(2\pi h)^d}  - \iint_{T^*\partial \Omega}
\zeta(|p'|^{-2s}) \frac{dp' d\sigma(x)}{(2\pi h)^{d-1}} + R_h \, ,
\end{equation}
where $T^*\Omega=\Omega\times\R^d$ and
$T^*\partial\Omega=\partial\Omega\times\R^{d-1}$ are the cotangent bundles over
$\Omega$ and $\partial\Omega$, respectively, and where $d\sigma$ is the surface
element of $\partial \Omega$. Here $\zeta$ is a universal (i.e., depending on
$s$, but independent of $\Omega$ or $d$) function, which has the interpretation
of an energy shift (the integral of a spectral shift). It is given in terms of a
one-dimensional model operator $A^+$ on the half-line $\R_+$ and its analogue
$A$ on the whole line (see Section \ref{sec:half}) by
$$
\zeta(\mu) \, = \, \mu^{-1} \int_0^\infty \lk a(t,t,\mu) - a^+(t,t,\mu) \rk dt \, , \quad \mu > 0 \, ,
$$
where $a(t,u,\mu)$ and $a^+(t,u,\mu)$ denote the integral kernels of $(A-\mu)_-$
and $(A^+-\mu)_-$, respectively. Another representation, derived in Remark
\ref{rephp}, shows that our result is consistent with the result of
\cite{BanKul08,BaKuSi09}.

In Section \ref{sec:snd} we also prove that
$$
\Lz>0 \,.
$$
Moreover, we compare this constant with the one obtained from the corresponding fractional power of the Dirichlet Laplacian.

\begin{proposition}\label{comp}
Let $0<s<1$ and assume that the boundary of $\Omega$ satisfies $\partial \Omega \in C^{1,\alpha}$ with some $0 < \alpha \leq 1$. Let $-\Delta_\Omega$ be the Dirichlet Laplacian on $\Omega$. Then
\begin{equation}
\label{eq:maincomp}
\tr\left( \left(-h^2\Delta_\Omega\right)^s -1 \right)_- \, =  \, \Le \, |\Omega| \, h^{-d} - \Lzt \, |\partial \Omega|
\, h^{-d+1} + R_h
\end{equation}
with $R_h=o(h^{-d+1})$ as $h\to 0+$. Here $\Le$ is the same as in \eqref{eq:weylconst} and $\Lzt$ satisfies
\begin{equation}
\label{eq:compconst}
\Lz < \Lzt \,.
\end{equation}
\end{proposition}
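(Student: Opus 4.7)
My approach reduces the asymptotic formula~\eqref{eq:maincomp} to the two-term Weyl asymptotics for $-h^2\Delta_\Omega$ proved in~\cite{FraGei11a}, and establishes the inequality~\eqref{eq:compconst} via an operator-theoretic comparison. The starting point for~\eqref{eq:maincomp} is the elementary identity
\[
(1 - E^s)_+ \;=\; s(1-E)_+ \;+\; s(1-s)\int_0^1 t^{s-2}\,(t-E)_+\,dt \qquad (E\ge 0),
\]
obtained by one integration by parts in $(1-E^s)_+ = s\int_0^1 t^{s-1}\mathbf{1}_{E\le t}\,dt$. Applied with $E = h^2\lambda_n(-\Delta_\Omega)$ and summed over $n$, this gives
\[
\tr\bigl((-h^2\Delta_\Omega)^s-1\bigr)_- \;=\; s\,\tr(-h^2\Delta_\Omega-1)_- \;+\; s(1-s)\int_0^1 t^{s-2}\,\tr(-h^2\Delta_\Omega-t)_-\,dt.
\]
I would then substitute the two-term asymptotics $\tr(-h^2\Delta_\Omega-\mu)_- = L_1^{\mathrm{cl}}\mu^{1+d/2}|\Omega|h^{-d} - L_2^{\mathrm{cl}}\mu^{(d+1)/2}|\partial\Omega|h^{-d+1} + o(h^{-d+1})$ from~\cite{FraGei11a} and carry out the $t$-integration: the weight $t^{s-2}$ is integrable against $t^{1+d/2}$ and $t^{(d+1)/2}$ at the origin since $d\ge 2$, the scaling identity $\tr(-h^2\Delta_\Omega-t)_- = t\,\tr(-(h/\sqrt t)^2\Delta_\Omega-1)_-$ controls the remainder uniformly in $t$ down to the semiclassical threshold $t\sim h^2$, and a direct computation shows that the leading $t$-integral reassembles into $\Le$ as defined in~\eqref{eq:weylconst}. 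The surface coefficient $\Lzt$ then emerges as an explicit positive multiple of the Dirichlet boundary Weyl constant $L_2^{\mathrm{cl}}$.

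For the weak inequality $\Lz\le\Lzt$ the key input is the form comparison
\[
(-\Delta)^s\big|_\Omega \;\le\; (-\Delta_\Omega)^s.
\]
Letting $V\colon L^2(\Omega)\to L^2(\R^d)$ denote the isometry of extension by zero, the map $B\mapsto V^*BV$ is unital and positive, so operator concavity of $x\mapsto x^s$ on $[0,\infty)$ together with the Choi--Davis--Jensen inequality gives $V^*(-\Delta)^sV \le (V^*(-\Delta)V)^s = (-\Delta_\Omega)^s$. Equivalently, Bochner subordination combined with the heat-kernel domination $G_\Omega(x,y,\tau)\le G_{\R^d}(x,y,\tau)$ produces the same form inequality. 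Consequently $H_\Omega \le (-h^2\Delta_\Omega)^s - 1$ as forms, so $\tr(H_\Omega)_- \ge \tr((-h^2\Delta_\Omega)^s-1)_-$. Inserting~\eqref{eq:main} and~\eqref{eq:maincomp}, cancelling the equal $\Le\,|\Omega|\,h^{-d}$ term, dividing by $h^{-d+1}$ and letting $h\to 0+$ yields $\Lz\le\Lzt$.

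To promote this to the strict inequality in~\eqref{eq:compconst} I would pass to the one-dimensional half-line model underlying the representation of $\Lz$ recalled after~\eqref{eq:main2}. Running the integration-by-parts identity of the first step on $\R_+$ with $A_D = -d^2/dt^2$ in place of $-\Delta_\Omega$ produces a parallel half-line formula for $\Lzt$ in terms of the fractional power $(A_D)^s$, while $\Lz$ involves the Kwa\'snicki operator $A^+$. The same Choi--Davis--Jensen / subordination argument on the half-line gives $A^+\le (A_D)^s$, hence $a^+(t,t,\mu)\ge a^D(t,t,\mu)$ pointwise for the diagonal of the integral kernel of the negative part, and by the reflection identity $G_{\R_+}(t,u,\tau) = G_{\R}(t-u,\tau)-G_{\R}(t+u,\tau)$ this kernel inequality is strict on an open set, which upon integrating over $t$ and over the transverse variable $|p'|$ produces a strictly positive $\Lzt-\Lz$. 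I expect the main technical obstacle to lie in the first step: uniformly controlling the Dirichlet Riesz-mean remainder for $t\in(0,1]$ all the way down to $t\sim h^2$, and verifying that the weighted integrals of the Weyl coefficients produced by the integration-by-parts identity really do reproduce $\Le$ and $\Lzt$ as defined elsewhere in the paper.
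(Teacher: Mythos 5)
Your proposal tracks the paper's own proof very closely: the same integration-by-parts identity $(E^s-1)_-=s(E-1)_-+s(1-s)\int_0^1\tau^{s-2}(E-\tau)_-\,d\tau$, the same reduction to the two-term Dirichlet Weyl asymptotics of \cite{FraGei11a}, and the same comparison principle for the subleading constants (your Choi--Davis--Jensen argument with the isometry of extension by zero is exactly the content of the paper's Lemma \ref{song} on complete Bernstein functions, since $E\mapsto E^s$ is both operator concave and operator monotone). Two small slips in your half-line step, though. First, the correct comparison operator is $B^+=-\frac{d^2}{dt^2}+1$ with Dirichlet boundary condition on $\R_+$, not $A_D=-\frac{d^2}{dt^2}$; the $+1$ shift comes from normalizing the tangential momentum, since the half-line model behind \eqref{eq:K} is $A^+=(-\frac{d^2}{dt^2}+1)^s$, and the relevant inequality is $A^+\leq (B^+)^s$, not $A^+\leq(-\frac{d^2}{dt^2})^s$. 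Second, from the form inequality $A^+\leq(B^+)^s$ you cannot directly deduce a pointwise inequality of diagonal kernels $a^+(t,t,\mu)\geq a^D(t,t,\mu)$; that step is not justified (and not needed). The paper instead passes to the trace of the difference, $\tr\left[(A^+)^{-(d-1)/2s}-(B^+)^{-(d-1)/2}\right]>0$, using that the operators differ and that the trace functional respects the form order via eigenvalue (or, on continuous spectrum, via the heat-kernel/Laplace-transform) comparison, and your argument should be routed the same way. Your preliminary observation that $H_\Omega\leq(-h^2\Delta_\Omega)^s-1$ gives the weak inequality $\Lz\leq\Lzt$ directly from the two asymptotic expansions is correct and a nice sanity check, but is made redundant once the strict half-line comparison is in place.
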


In other words, the operators $H_\Omega$ and $\left(-h^2\Delta_\Omega\right)^s -1$ differ semi-classically to first subleading order.

\subsection{Strategy of the proof}
\label{ssec:strategy}

The proof of Theorem \ref{thm:main} is divided into
three main steps: First, we localize the operator $H_\Omega$ into balls, whose
size varies depending on the distance to the complement of $\Omega$. Then we can
analyze separately the semiclassical limit in the bulk and at the boundary.

The key idea is to choose the localization depending on the distance to the complement of $\Omega$, see \cite[Theorem 17.1.3]{Hoerma85} and \cite{SolSpi03}. 
Let $d(u) = \inf \{|x-u| \, : \,  \, x \notin \Omega  \}$ denote the distance of $u \in \R^d$ to the complement of $\Omega$. We set
\begin{equation}
\label{eq:l}
l(u) \, = \, \frac 12 \lk 1 + \lk d(u)^2 + l_0^2 \rk^{-1/2} \rk^{-1} \, , 
\end{equation}
where  $0 < l_0 \leq 1/2$ is a small parameter depending only on $h$. Indeed, we
will finally choose $l_0$ proportional to $h^\beta$ with suitable $0 < \beta <
1$.

In Section \ref{sec:loc} we construct real-valued functions $\phi_u \in C_0^\infty(\R^d)$ with support in the ball $B_u = \{ x \in \R^d \, : \, |x-u| < l(u) \}$. For all $u \in \R^d$ these functions satisfy
\begin{equation}
\label{eq:int:grad}
\left\| \phi_u \right\|_\infty \, \leq \, C \ ,  \qquad \left\| \nabla \phi_u \right\|_\infty \leq C \, l(u)^{-1}
\end{equation}
and for all $x \in \R^d$
\begin{equation}
\label{eq:int:unity}
\int_{\R^d} \phi_u^2(x) \, l(u)^{-d} \, du \, = \, 1 \, .
\end{equation}
Here and in the following the letter $C$ denotes various positive constants  that are independent of $u$, $l_0$ and $h$. 

\begin{proposition}
\label{pro:loc}
There is a constant $C > 0$ depending only on $s$ and $d$ such that for all $0 < l_0 \leq 1/2$ and all $0 < h \leq C^{-1} l_0$ the estimates
$$
0 \, \leq \,  \tr (H_\Omega)_- -  \int_{\R^d} \textnormal{Tr} \lk \phi_u
H_\Omega \phi_u \rk_- l(u)^{-d} \, du  \, 
\leq \, C \, h^{-d+2} \, l_0^{-1} R_{\textnormal{loc}}(l_0,h)
$$
hold with a remainder
$$
 R_{\textnormal{loc}}(l_0,h) \, = \, \left\{ \begin{array}{ll} 1 &
\textnormal{if} \ 1-d/4 < s < 1 \\
|\ln (l_0/h)|^{1/2} &  \textnormal{if} \ 0 < s =1-d/4  \\
(l_0/h)^{2-2s-d/2} & \textnormal{if} \  0< s < 1-d/4
\end{array} \right. \, .   
$$
\end{proposition}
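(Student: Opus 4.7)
The lower bound $\tr(H_\Omega)_- \geq \int_{\R^d} \tr(\phi_u H_\Omega \phi_u)_- l(u)^{-d} du$ is immediate from the variational principle. Let $P_u$ be the spectral projection of $\phi_u H_\Omega \phi_u$ onto its negative spectral subspace, and set $\rho := \int_{\R^d} \phi_u P_u \phi_u \, l(u)^{-d} du$. The partition-of-unity identity \eqref{eq:int:unity} together with $P_u \leq 1$ yields $0 \leq \rho \leq 1$ as an operator, and cyclicity of the trace gives $\tr(H_\Omega)_- \geq -\tr(H_\Omega \rho) = \int \tr(\phi_u H_\Omega \phi_u)_- l(u)^{-d} du$, with no error term.

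For the upper bound I would first derive an IMS-type form identity. Using the symmetric decomposition
$$
(\phi_u f)(x) - (\phi_u f)(y) = \tfrac{1}{2} (\phi_u(x) + \phi_u(y))(f(x) - f(y)) + \tfrac{1}{2} (f(x) + f(y))(\phi_u(x) - \phi_u(y))
$$
for $f \in \mathcal{H}^s(\Omega)$, squaring, and integrating against $l(u)^{-d} du$, the cross term vanishes by \eqref{eq:int:unity} since $\int (\phi_u(x)^2 - \phi_u(y)^2) l(u)^{-d} du = 0$; a short computation then yields
$$
\int \langle \phi_u f, H_\Omega \phi_u f\rangle l(u)^{-d} du = \langle f, H_\Omega f\rangle + \mathcal{E}(f),
$$
with error form
$$
\mathcal{E}(f) = 2 C_{s,d} h^{2s} \iint \frac{(1 - F(x,y))\, f(x) f(y)}{|x-y|^{d+2s}} dx dy, \qquad F(x,y) := \int \phi_u(x) \phi_u(y) l(u)^{-d} du.
$$
Calling $E$ the self-adjoint operator associated with $\mathcal{E}$ and $\gamma_*$ the negative spectral projection of $H_\Omega$, the variational principle (applied for each $u$ to $\phi_u H_\Omega \phi_u$ with $\gamma_* \leq 1$) gives
$$
\tr(H_\Omega)_- = -\int \tr(\phi_u H_\Omega \phi_u \gamma_*) l(u)^{-d} du + \tr(E \gamma_*) \leq \int \tr(\phi_u H_\Omega \phi_u)_- l(u)^{-d} du + |\tr(E \gamma_*)|.
$$

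The remaining task is to bound $|\tr(E\gamma_*)|$ by the stated remainder. The regularity bounds \eqref{eq:int:grad} imply the geometric estimate $1 - F(x,y) \leq C \min(1, |x-y|^2/l(x)^2)$, and after a Cauchy--Schwarz symmetrization $2|f(x) f(y)| \leq f(x)^2 + f(y)^2$ this yields the form bound $|\mathcal{E}(f)| \leq \langle f, V f\rangle$ with multiplier $V(x) \leq C h^{2s} l(x)^{-2s}$. Hence $|\tr(E \gamma_*)| \leq \int V(x) \rho_*(x) dx$ for the density $\rho_*(x) := K_{\gamma_*}(x,x)$. The density is controlled in two complementary ways: the pointwise bound $\rho_*(x) \leq C h^{-d}$ (from a trial-state argument) and the kinetic-energy/dual Lieb--Thirring inequality $\tr((-h^2\Delta)^s \gamma_*) \geq K h^{2s} \int \rho_*^{(d+2s)/d}$, which combined with the a priori estimate $\tr((-h^2\Delta)^s \gamma_*) \leq C h^{-d} |\Omega|$ yields $\|\rho_*\|_{L^{(d+2s)/d}} \leq C h^{-d}$. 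Interpolating these estimates against $V$ and using the geometric tail $\int l(x)^{-q} dx \leq C l_0^{1-q} |\partial\Omega|$ for $q > 1$ (reflecting the boundary layer of width $\sim l_0$) produces the three regimes in $R_{\textnormal{loc}}$: the $L^\infty$ bound on $\rho_*$ suffices when $2s + d/2 > 2$, giving $R_{\textnormal{loc}} = 1$; the marginal case $2s + d/2 = 2$ picks up a logarithm from the borderline integrability; and the kinetic-energy bound must be invoked when $2s + d/2 < 2$, producing the algebraic loss $(l_0/h)^{2 - 2s - d/2}$.

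The main obstacle is precisely this last step: the naive pointwise form estimate $|\mathcal{E}| \leq \langle \cdot, V \cdot\rangle$ is not sharp enough for small $s$, so the non-local error analysis must be refined in the spirit of \cite{LieYau88, SoSoSp10}, and the resulting Lieb--Thirring interpolation must be tracked carefully against the layered geometry encoded in $l(x)$ in order to land precisely on the stated $R_{\textnormal{loc}}$.
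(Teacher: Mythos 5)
Your treatment of the trivial direction ($\tr(H_\Omega)_-\geq\int\tr(\phi_u H_\Omega\phi_u)_-\,l(u)^{-d}du$) via the trial state $\rho=\int\phi_u(\phi_u H_\Omega\phi_u)_-^0\phi_u\,l(u)^{-d}du$ matches the paper exactly, and your IMS identity is equivalent to the paper's Lemma~\ref{lem:loc_ims}: indeed $1-F(x,y)=\tfrac12\int(\phi_u(x)-\phi_u(y))^2 l(u)^{-d}du$, so your error form $\mathcal{E}$ is the same as the paper's form $(f,Lf)$.

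The gap is in the estimate of the localization error, and it is fatal. Bounding $|\mathcal{E}(f)|\leq\langle f,Vf\rangle$ with $V(x)\leq Ch^{2s}l(x)^{-2s}$ and then pairing against the density $\rho_*$ gives at best $|\tr(E\gamma_*)|\lesssim h^{2s-d}\int_\Omega l(x)^{-2s}dx\lesssim h^{2s-d}\,l_0^{1-2s}$ (using the boundary-layer geometry). The ratio of this to the claimed remainder is
$$
\frac{h^{2s-d}\,l_0^{1-2s}}{h^{-d+2}\,l_0^{-1}\,R_{\textnormal{loc}}}\;=\;\frac{(l_0/h)^{2-2s}}{R_{\textnormal{loc}}}\,,
$$
which tends to infinity as $l_0/h\to\infty$ in \emph{every} one of the three regimes (it equals $(l_0/h)^{2-2s}$, $(l_0/h)^{d/2}/|\ln(l_0/h)|^{1/2}$, or $(l_0/h)^{d/2}$, respectively). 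So the crude potential bound loses a polynomial factor in $l_0/h$ and does not yield the proposition; in particular your assertion that ``the $L^\infty$ bound on $\rho_*$ suffices when $2s+d/2>2$'' is incorrect. Replacing the $L^\infty$ bound on $\rho_*$ by the dual Lieb--Thirring bound and interpolating against $V$ does not recover the missing factor either: the problem is not the control of $\rho_*$ but that $V$ itself is too large.

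What the paper actually does (Lemma~\ref{lem:loc_rem}, following the Lieb--Yau argument from \cite{LieYau88}) is to introduce a free cutoff scale $\delta$ and split each $L_{\phi_u}$ into a short-range part $L^1_{\phi_u}$ ($|x-y|<\delta l(u)$ after rescaling) and a long-range part $L^0_{\phi_u}$. The short-range part is dominated by a multiplication operator $\theta$ whose strength is only $C\delta^{2-2s}l(u)^{-2s}$ --- the crucial extra factor $\delta^{2-2s}$ that your one-shot bound misses. The long-range part is controlled not pointwise but through its Hilbert--Schmidt norm, $\tr(L^0_{\phi_u})^2\leq Cr(\delta)$, using the operator inequality $\tr\gamma L_{\phi_u}\leq\tr\gamma(\theta+\varepsilon\chi)+\tfrac{\|\gamma\|}{2\varepsilon}\tr(L^0_{\phi_u})^2$. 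The three regimes in $R_{\textnormal{loc}}$ then arise from the three behaviors of $r(\delta)$, and the per-$u$ choice $\delta_u$ (e.g.\ $\delta_u=h/l(u)$ when $1-d/4<s<1$) is what balances the two contributions. Without this short-range/long-range separation one cannot reach the stated remainder.
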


In view of this result, one can analyze the local asymptotics, i.e.,  the
asymptotic behavior of $\tr(\phi_u H_\Omega \phi_u)_-$, separately on different
parts of $\Omega$. First, we consider the bulk, where the influence of the
boundary is not felt.

\begin{proposition}
\label{pro:bulk}
Assume that $\phi \in C_0^1(\Omega)$ is real-valued, supported in a ball of radius $l > 0$ and
\begin{equation}
\label{eq:int:gradphi}
\| \nabla \phi \|_\infty \, \leq \, C  l^{-1} \, .
\end{equation}
Then for all $h > 0$ the estimates
$$
- C  l^{d-2}  h^{-d+2} \, \leq \, \tr \lk \phi H_{\Omega} \phi \rk_- - \Le \int_\Omega \phi^2(x) \, dx \, h^{-d}   \, \leq \,   0
$$
hold with a constant depending only on the constant in (\ref{eq:int:gradphi}).
\end{proposition}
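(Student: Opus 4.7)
My approach splits $\tr(\phi H_\Omega\phi)_-$ into matching upper and lower bounds using a common reformulation. Since $\phi$ is compactly supported in $\Omega$, any eigenfunction of $\phi H_\Omega\phi$ with nonzero eigenvalue is supported in $\operatorname{supp}(\phi)\subset\Omega$, so the negative spectrum of $\phi H_\Omega\phi$ on $L^2(\Omega)$ coincides with that of $\phi H_{\R^d}\phi$ on $L^2(\R^d)$, where $H_{\R^d}:=(-h^2\Delta)^s-1$ is the whole-space operator. Both directions then exploit the variational identity $\tr(A)_- = -\inf\{\tr(A\gamma):0\le\gamma\le I\}$.

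For the upper bound, let $\{u_i\}$ be an orthonormal basis of the negative spectral subspace of $\phi H_{\R^d}\phi$; then
$$
\tr(\phi H_\Omega\phi)_- = \sum_i \int_{\R^d} \bigl(1 - |hp|^{2s}\bigr)\,|\widehat{\phi u_i}(p)|^2\,dp.
$$
Observing that $\widehat{\phi u_i}(p) = (2\pi)^{-d/2}\langle \phi\, e^{ip\cdot}, u_i\rangle$, Bessel's inequality for the orthonormal family $\{u_i\}$ gives the pointwise bound $\sum_i|\widehat{\phi u_i}(p)|^2 \le (2\pi)^{-d}\int\phi^2$. Replacing $(1-|hp|^{2s})$ by $(1-|hp|^{2s})_+$ yields $\tr(\phi H_\Omega\phi)_-\le \Le\,h^{-d}\int\phi^2$ at once.

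For the lower bound I would construct a coherent-state trial. Fix $g\in C^\infty_0(B_1)$ with $\|g\|_2=1$, set $g_l(x):=l^{-d/2}g(x/l)$, and form $\Phi_{y,p}(x):=g_l(x-y)e^{ipx/h}$; the standard resolution of identity $\iint|\Phi_{y,p}\rangle\langle\Phi_{y,p}|\,dy\,dp/(2\pi h)^d=I$ ensures that
$$
\gamma := \iint \chi_{\{|p|^{2s}<1\}}(p)\,|\Phi_{y,p}\rangle\langle\Phi_{y,p}|\,\frac{dy\,dp}{(2\pi h)^d}
$$
satisfies $0\le\gamma\le I$. Although $\gamma$ is not itself trace class, the compact support of $\phi$ makes $\phi H_{\R^d}\phi\,\gamma$ trace class, and restricting the $y$-integral to a large ball yields genuinely trace-class trial states justifying $\tr(\phi H_\Omega\phi)_-\ge-\tr(\phi H_{\R^d}\phi\,\gamma)$. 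Setting $\psi_y:=\phi\,g_l(\cdot-y)$ and using $\widehat{\psi_y\,e^{ip\cdot/h}}(k)=\hat\psi_y(k-p/h)$, one computes
$$
\tr(\phi H_{\R^d}\phi\,\gamma) = \frac{1}{(2\pi h)^d}\iint|\hat\psi_y(q)|^2\,J(q)\,dq\,dy, \qquad J(q):=\int_{|p|^{2s}<1}\!\bigl(|p+hq|^{2s}-1\bigr)\,dp.
$$

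The technical core, and the main obstacle, is the uniform estimate $|J(q)-J(0)|\le C|hq|^2$, where $J(0)=-(2\pi)^d \Le$. For $|hq|\le 1$ one Taylor expands $r\mapsto|r|^{2s}$: the linear term integrates to zero by the radial symmetry of the ball $\{|p|<1\}$, while the quadratic remainder is dominated by $\int_{|p|<1}|p|^{2s-2}\,dp$, which converges precisely because $d+2s>2$ (satisfied for all $d\ge 2$ and $s>0$). For $|hq|\ge 1$ the crude estimate $|J(q)|\le C(1+|hq|^{2s})$ combined with $|hq|^{2s}\le|hq|^2$ (since $s<1$) gives the same $O(|hq|^2)$ bound. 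Because $\int\|\psi_y\|_2^2\,dy=\int\phi^2$ (from $\int g_l^2=1$), the leading contribution equals $-\Le\,h^{-d}\int\phi^2$, and the error is controlled by
$$
\frac{1}{(2\pi h)^d}\iint|\hat\psi_y(q)|^2\,|hq|^2\,dq\,dy \;=\; \frac{h^{2}}{(2\pi h)^d}\int\|\nabla\psi_y\|_2^2\,dy \;\le\; C\,h^{-d+2}\,l^{d-2},
$$
where the last inequality uses $\|\nabla\phi\|_\infty\le Cl^{-1}$, $|\operatorname{supp}\phi|\le Cl^d$, and $\|\nabla g_l\|_2=O(l^{-1})$. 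The variational bound then yields the matching lower bound and completes the proof.
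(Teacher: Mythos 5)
Your upper bound and the paper's are essentially the same: your Bessel-inequality argument for the orthonormal basis of the negative spectral subspace is simply a direct rendition of the Berezin--Li--Yau inequality that the paper invokes in Lemma \ref{lem:berezin}, and the observation that $\operatorname{supp}\phi\subset\Omega$ lets you replace $H_\Omega$ by $H_0$ is also used in the paper.

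For the lower bound your route is genuinely different. The paper takes the \emph{sharp} trial state $\gamma=(H_0)_-^0$, whose kernel $(2\pi h)^{-d}\int_{|p|<1}e^{ip\cdot(x-y)/h}dp$ is computed explicitly, writes $\phi(x)\phi(y)=\tfrac12(\phi^2(x)+\phi^2(y)-|\phi(x)-\phi(y)|^2)$, and reduces the error to estimating
$R_h(p)=\int\bigl(\tfrac12(|p+h\eta|^{2s}+|p-h\eta|^{2s})-|p|^{2s}\bigr)|\hat\phi(\eta)|^2d\eta$,
which is handled by the elegant inequality $\tfrac12(|p+\eta|^{2s}+|p-\eta|^{2s})-|p|^{2s}\le s|p|^{2s-2}|\eta|^2$. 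You instead build a coherent-state trial density matrix and reduce to the estimate $J(q)-J(0)\le C|hq|^2$ where $J(q)=\int_{|p|<1}(|p+hq|^{2s}-1)\,dp$. Both give the same remainder $Cl^{d-2}h^{-d+2}$. The paper's sharp-projector route is shorter because the Fourier diagonalization of $H_0$ is explicit; your coherent-state approach is the more robust general-purpose tool, and is exactly the philosophy the paper itself uses later (in Section \ref{sec:loc}) for the global localization.

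Two small points on your key estimate. First, you only need the one-sided bound $J(q)-J(0)\le C|hq|^2$, not $|J(q)-J(0)|\le C|hq|^2$: in the identity $\tr(\phi H_{\R^d}\phi\,\gamma)=-\Le h^{-d}\int\phi^2+(2\pi h)^{-d}\iint|\hat\psi_y(q)|^2(J(q)-J(0))\,dq\,dy$ only an upper bound on the last term is required. Second, your Taylor argument as written is not quite rigorous where $|p|\lesssim|hq|$, since the Hessian of $|p|^{2s}$ blows up like $|p|^{2s-2}$ and is not uniformly summable along the segment from $p$ to $p+hq$. The clean fix, which also gives exactly the one-sided bound needed, is to symmetrize $J(q)-J(0)=\int_{|p|<1}\bigl(\tfrac12|p+hq|^{2s}+\tfrac12|p-hq|^{2s}-|p|^{2s}\bigr)dp$ using $p\mapsto -p$ invariance of the ball, and then apply the same concavity inequality the paper uses, giving $J(q)-J(0)\le s|hq|^2\int_{|p|<1}|p|^{2s-2}dp$, finite since $d+2s>2$. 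Alternatively, split the $p$-integral at $|p|=2|hq|$; the near region contributes $O(|hq|^{d+2s})\le O(|hq|^2)$ for $|hq|\le 1$ and the far region is handled by the genuine Taylor remainder. Either way your argument closes, and the rest of your computation (the resolution of identity, $\int\|\psi_y\|_2^2\,dy=\int\phi^2$, and the gradient bound $\int\|\nabla\psi_y\|_2^2\,dy\le Cl^{d-2}$) is correct.
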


Close to the boundary of $\Omega$, more precisely, if the support of $\phi$ intersects the boundary, a boundary term of the order $h^{-d+1}$ appears.

\begin{proposition}
\label{pro:boundary}
There is a constant $c > 0$ depending only on $\Omega$ such that the following holds. Assume that $\phi \in C_0^1(\R^d)$ is real-valued, supported in a ball of radius $0<l \leq c$ intersecting the boundary of $\Omega$ and satisfies (\ref{eq:int:gradphi}). Then for all $h > 0$ the estimates
$$
-\tilde R_{\textnormal{bd}}(l,h)  \, \leq \, \tr \lk \phi H_{\Omega} \phi \rk_- - \Le \int_\Omega \phi^2(x) dx  h^{-d} + \Lz \int_{\partial \Omega} \phi^2(x) d\sigma(x) h^{-d+1} \, \leq \, R_{\textnormal{bd}}(l,h)
$$
hold. Here $d\sigma$ denotes the $(d-1)$-dimensional volume element of $\partial \Omega$ and the remainder terms satisfy for any $0 < \delta_1 < 1$ and $0 < \delta_2 < \min\{ 1,2s \}$
\begin{align*}
R_{\textnormal{bd}}(l,h) \, &\leq \, C_{\delta_1} \lk
\frac{l^{d-1-\delta_1}}{h^{d-1-\delta_1}} +
\frac{l^{d+\alpha}}{h^d} \rk \, ,\\
\tilde R_{\textnormal{bd}}(l,h) \, &\leq \, C_{\delta_1,\delta_2} \lk  \frac{l^{d-1-\delta_1}}{h^{d-1-\delta_1}} + \frac{l^{d-1-\delta_2}}{h^{d-1-\delta_2}} + \frac{l^{2\alpha+d-1}}{h^{d-1}} + \frac{l^{d+\alpha}}{h^d} \rk \, ,
\end{align*}
with positive constants $C_{\delta_1}$, $C_{\delta_1,\delta_2}$ depending on $\delta_1$, $\delta_2$, $\Omega$ and the constant in  (\ref{eq:int:gradphi}).
\end{proposition}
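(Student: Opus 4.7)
The plan is to reduce the problem to the half-space model via a $C^{1,\alpha}$ straightening of the boundary, diagonalize the resulting half-space operator by partial Fourier transform in the tangential directions, invoke Kwa\'snicki's results on the one-dimensional model operator $A^+$, and finally transfer the estimate back to $\Omega$, tracking all errors. Since $\phi$ is supported in a ball $B$ of radius $l$ meeting $\partial\Omega$ and $\partial\Omega \in C^{1,\alpha}$, I would pick coordinates in which $\partial\Omega \cap B$ is the graph $\{x_d = f(x')\}$ of a function $f$ with $f(0)=0$, $\nabla f(0)=0$ and $|f(x')|+|x'|\,|\nabla f(x')| \leq C|x'|^{1+\alpha}$ on $B$. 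Let $\Omega_0 = \{x_d>0\}$ be the model half-space and $\Phi(x',x_d) = (x', x_d - f(x'))$, so that $\Phi$ sends $\Omega\cap B$ onto a subset of $\Omega_0$ with $\|D\Phi - I\|_\infty \leq Cl^\alpha$ and $|\det D\Phi - 1|\leq C l^\alpha$.

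I would first prove the proposition with $\Omega$ replaced by $\Omega_0$ (flat boundary, $d\sigma = dx'$). Partial Fourier transform in $x'$ decomposes $H_{\Omega_0}$ as the direct integral over $p'\in\R^{d-1}$ of one-dimensional operators $h^{2s}(-\partial_{x_d}^2 + |p'|^2)^s - 1$ on $L^2(\R_+)$ with exterior Dirichlet condition. After the normal rescaling $x_d = hy$ each fiber becomes $(-\partial_y^2 + |hp'|^2)^s - 1$, which for $|hp'|<1$ is a rescaled version of the half-line operator $A^+$ from Section~\ref{sec:half}. The trace of its negative part then splits as the analogous trace for the full-line operator $A$ (which integrates over $p'$ by Plancherel to the bulk Weyl term $\Le\int\phi^2\,dx\,h^{-d}$) plus the spectral-shift correction $\zeta(|hp'|^{-2s})$; integrating the latter over tangential momenta yields exactly the constant $\Lz$ from \eqref{eq:mod:lz}. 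The localization by $\phi$ is inserted using that $\phi(x)\approx \phi(x',0)$ on the strip $\{0 \leq x_d \leq l\}$ with error controlled by $l\|\nabla\phi\|_\infty \leq C$, while the non-local commutator $[\phi,(-h^2\Delta)^s]$ is controlled via the refined estimates of Section~\ref{sec:loc} (improving upon \cite{LieYau88,SoSoSp10}), producing the losses $l^{d-1-\delta_j}/h^{d-1-\delta_j}$.

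To transfer back to $\Omega$, I would compare the quadratic form $(u,H_\Omega u)$ with $(u\circ\Phi^{-1}, H_{\Omega_0}(u\circ\Phi^{-1}))$. Because $\|D\Phi - I\|_\infty \leq Cl^\alpha$ on $B$, a change-of-variables argument combined with a splitting of the non-local kernel $|x-y|^{-d-2s}$ into near and far fields shows that the two forms differ by at most $Cl^\alpha$ times the sum of the bulk and boundary Weyl terms. By the min-max characterization of $\tr(\cdot)_-$, this propagates to trace errors of order $l^{d+\alpha}/h^d$ (bulk) and $l^{2\alpha+d-1}/h^{d-1}$ (boundary). The asymmetry between $R_{\textnormal{bd}}$ and $\tilde R_{\textnormal{bd}}$, in particular the extra term $l^{d-1-\delta_2}/h^{d-1-\delta_2}$ in the lower bound, arises because a variational trial-space argument suffices for the upper bound, whereas the lower bound requires controlling the perturbation in both directions and the commutator loss in the range $\delta_2 < \min\{1,2s\}$ reflects the threshold at which the non-local kernel still gives absolute convergence.

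The main obstacle is this last step: the interaction between the non-locality of $(-\Delta)^s$ and the $C^{1,\alpha}$-straightening. A boundary perturbation of size $l^{1+\alpha}$ affects the quadratic form through pairs $(x,y)$ at arbitrary distance via the kernel $|x-y|^{-d-2s}$, so the comparison error cannot be bounded merely by a Lipschitz constant of $\Phi$. One must carefully split the double integral into a near-field regime (where the difference quotient of $u\circ\Phi^{-1}$ is comparable to that of $u$ up to factors $l^\alpha$) and a far-field regime (absorbed by the $L^2$ mass of $u$), and then reconcile this with the commutator bounds against $\phi$ so that the $\delta_j$-slack absorbs the logarithmic and power-of-$l/h$ factors arising from the long-range tail. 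This is the technically most delicate point of the proof.
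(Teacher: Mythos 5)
Your high-level plan — straighten the boundary with a $C^{1,\alpha}$ graph map, reduce to the half-space, diagonalize $H^+$ by tangential Fourier transform using Kwa\'snicki's results on the one-dimensional operator $A^+$, and transfer back while tracking the errors — is exactly the paper's route (Lemma \ref{lem:str1}, Lemma \ref{lem:str}, Proposition \ref{lem:mod}, combined at the end of Section \ref{sec:boundary}). You also correctly attribute the error terms $l^{d+\alpha}/h^d$ and $l^{2\alpha+d-1}/h^{d-1}$ to the straightening of the bulk and surface integrals, respectively.

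There is, however, a genuine misconception in what you flag as the "technically most delicate point." You claim that because a boundary perturbation of size $l^{1+\alpha}$ acts through the long-range kernel $|x-y|^{-d-2s}$, "the comparison error cannot be bounded merely by a Lipschitz constant of $\Phi$" and that one must split the double integral into near and far fields. This is false, and no such split is needed. The map $\varphi(x',x_d)=(x',x_d-f(x'))$ is measure preserving, keeps tangential distances exact, and distorts the full distance only through $f(y')-f(z')$, which is bounded by $\sup|\nabla f|\,|y'-z'|\le Cl^{\alpha}|y-z|$. Consequently the paper (Lemma \ref{lem:str1}) obtains the pointwise bound
$\bigl|\,|x-w|^{-d-2s}-|y-z|^{-d-2s}\,\bigr|\le Cl^{\alpha}|y-z|^{-d-2s}$
\emph{uniformly for all pairs} $(y,z)$, precisely because it is a small \emph{relative} Lipschitz distortion and not a small absolute displacement that matters. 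The nonlocality is therefore harmless here: once the relative kernel comparison holds globally, the forms agree up to a factor $1\pm Cl^\alpha$, and the trace error $Cl^{d+\alpha}h^{-d}$ follows from a single Berezin-type bound. Proceeding with a near/far split, as you propose, would needlessly complicate the argument and would not by itself reproduce the clean $1\pm Cl^\alpha$ form inequality you need for the operator comparison.

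A secondary inaccuracy: you attribute the $l^{d-1-\delta_j}/h^{d-1-\delta_j}$ losses to "refined estimates of Section \ref{sec:loc}." In fact those errors arise entirely within the half-space analysis of Section \ref{sec:half}: the $\delta_1$-loss comes from the H\"older estimate \eqref{eq:modk} used to replace the profile $h^{-1}K(x_d/h)$ by the surface delta, and the $\delta_2$-loss (present only in the lower bound of $\tr(\cdot)_-$) is the remainder $R_h[\phi]$ of the trial computation with $\gamma=(H^+)_-^0$, controlled via Lemma \ref{eest} and the fractional-derivative estimate of Lemma \ref{lem:remest}. Section \ref{sec:loc} deals with the multiscale localization (Proposition \ref{pro:loc}) and is a separate ingredient assembled later in the proof of the main theorem.
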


Based on these propositions we can complete the proof of Theorem \ref{thm:main}.

\begin{proof}[Proof of Theorem \ref{thm:main}]
In order to apply Proposition \ref{pro:boundary} to the operators $\phi_u H_\Omega \phi_u$, we need to estimate $l(u)$ uniformly. Let
$$
U(\Omega) \, = \,\{ u \in \R^d \, : \, B_u \cap \partial \Omega \neq \emptyset \}
$$ 
be a small neighborhood of the boundary.
For $u \in U(\Omega)$ we have $d(u) \leq l(u)$, which by the definition of $l(u)$ implies
\begin{equation}
\label{eq:int:luup}
l(u) \leq \,  l_0/ \sqrt3 \, .
\end{equation}

In view of (\ref{eq:int:grad}) and (\ref{eq:int:luup}) we can apply Proposition \ref{pro:bulk} and Proposition \ref{pro:boundary} to all functions $\phi_u$, $u \in \R^d$, if $l_0$ is sufficiently small. Combining these results with Proposition \ref{pro:loc} we get
\begin{align*}
& -  C \int_{\Omega \setminus U(\Omega)} l(u)^{-2}  du \, h^{-d+2}  -  \int_{U(\Omega)}  \tilde R_{\textnormal{bd}}(l(u),h) \, l(u)^{-d}  du \\
& \leq \,  \tr \lk H_{\Omega} \rk_- - \Le \int_{\R^d} \int_\Omega \phi_u^2(x) dx \, \frac{du}{l(u)^d} \, h^{-d} + \Lz \int_{\R^d} \int_{\partial \Omega} \phi_u^2(x) d\sigma(x) \, \frac{du}{l(u)^d} \, h^{-d+1} \\
& \leq \,  \int_{U(\Omega)}  R_{\textnormal{bd}}(l(u),h) l(u)^{-d}  du + C
h^{-d+2} l_0^{-1} R_{\textnormal{loc}}(l_0,h) \, .
\end{align*}
Now we change the order of integration and in view of (\ref{eq:int:unity}) we obtain
\begin{align}
\nonumber
& - C \int_{\Omega \setminus U(\Omega)} l(u)^{-2}  du \, h^{-d+2} - 
\int_{U(\Omega)} \tilde R_{\textnormal{bd}}(l(u),h) \, l(u)^{-d} du \\
\nonumber
& \leq \,  \tr \lk H_{\Omega} \rk_- - \Le \, |\Omega| \, h^{-d} + \Lz \, |\partial \Omega| \, h^{-d+1}   \\
\label{eq:int:rem1}
& \leq \,  \int_{U(\Omega)} R_{\textnormal{bd}}(l(u),h) l(u)^{-d}  du +  C
h^{-d+2} l_0^{-1} R_{\textnormal{loc}}(l_0,h) \, .
\end{align}
It remains to estimate the error terms.

By definition of $l(u)$ we have
\begin{equation}
\label{eq:int:lulo}
l(u) \geq \,  \frac 14 \min \lk d(u), 1 \rk \quad \textnormal{and} \quad l(u) \, \geq \frac{l_0}{4} 
\end{equation}
for all $u \in \R^d$ and $l_0 \leq 1$. For $u \in \Omega \setminus U(\Omega)$, we find $d(u) \geq l(u) \geq l_0/4$. Hence, we can estimate
$$
\int_{\Omega \setminus U(\Omega)} l(u)^{-2} du \, \leq \, C \lk 1 + \int_{\{d(u) \geq l_0/4 \} } d(u)^{-2} du \rk  \leq C \lk 1 + \int_{l_0/4}^\infty t^{-2} \, |\partial \Omega_t | \, dt \rk \, ,
$$
where $|\partial \Omega_t|$ denotes the surface area of the boundary of $\Omega_t = \{ x \in \Omega \, : \, d(x) > t \}$. Using the fact that $|\partial \Omega_t|$ is uniformly bounded and that $|\partial \Omega_t| = 0$ for large $t$, we get
\begin{equation}
\label{eq:int:U1}
\int_{\Omega \setminus U(\Omega)} l(u)^{-2} du \, \leq \, C l_0^{-1} \, .
\end{equation}
For $u \in U(\Omega)$ the inequalities (\ref{eq:int:luup}) and (\ref{eq:int:lulo}) show that $l(u)$ is proportional to $l_0$. Since $B_u \cap \partial \Omega \neq \emptyset$ we find $d(u) \leq l(u) \leq l_0$ and 
\begin{equation}
\label{eq:int:U2}
\int_{U(\Omega)} l(u)^a du \, \leq \, C l_0^a \int_{\{ d(u) \leq l_0 \} } du \, \leq \, Cl_0^{a+1} \, ,
\end{equation}
for any $a \in \R$.

We insert (\ref{eq:int:U1}) and (\ref{eq:int:U2}) into (\ref{eq:int:rem1}) and
get (using the fact that $h\leq C^{-1} l_0$)
\begin{align}
\nonumber
- C \lk l_0^{-\delta_2} h^{\delta_2} + 
l_0^{2\alpha} +  l_0^{\alpha+1} h^{-1} \rk  
& \leq \,  h^{d-1} \left( \tr \lk H_{\Omega} \rk_- - \Le \, |\Omega| \, h^{-d} +
\Lz \, |\partial \Omega| \, h^{-d+1} \right) \\
\label{eq:int:est1}
& \leq \, C \lk  l_0^{-\delta_1} h^{\delta_1} + l_0^{\alpha+1} h^{-1} +
l_0^{-1} h
\,R_{\textnormal{loc}}(l_0,h)  \rk  \, .
\end{align}

In order to choose $l_0$ we need to distinguish several cases. For the lower
bound we recall that $0<\delta_2<\min\{1,2s\}$. The stated lower bound on $R_h$
follows with $l_0$ proportional to $h^\beta$, where $\beta =
(1+\delta_2)/(1+\alpha+\delta_2)$. 

For the upper bound we have $0<\delta_1<1$. If $1-d/4<s<1$, we pick
$l_0$ proportional to $h^\beta$, where $\beta =(1+\delta_1)/(1+\alpha+\delta_1)
$. If $0<s\leq 1-d/4$, we pick $h^\beta$, where $\beta
=(2s+d/2)/(\alpha+2s+d/2)$. This completes the proof of Theorem \ref{thm:main}.
\end{proof}

The remainder of the text is structured as follows. First we 
analyze the local asymptotics in the bulk and prove Proposition \ref{pro:bulk}. 
This is done in Section \ref{sec:bulk}. In Section~\ref{sec:half} we consider
the local asymptotics in the case where $\Omega$ is replaced by a half-space. We
reduce the problem close to the boundary to the analysis of a one-dimensional
model operator given on a half-line and give an analogue of
Proposition~\ref{pro:boundary} for a half-space.
In Section~\ref{sec:boundary} we show how Proposition~\ref{pro:boundary} follows
from the previous considerations by local straightening of the boundary.
In Section~\ref{sec:loc}, we perform
the localization and, in particular, prove Proposition \ref{pro:loc}.
In the appendix we provide some technical results about the one-dimensional model operator introduced in Section \ref{sec:half}.

\subsection*{Notation} We define the positive and negative parts of a real
number $x$ by $x_\pm=\!\max\{0,\pm x\}$. We use a similar notation for the Heaviside function, namely, $x_\pm^0 = 1$ if $\pm x \geq 0$ and $x_\pm^0 = 0$ if $\pm
x < 0$. For a self-adjoint operator $X$, the operators $X_\pm$ and $X_\pm^0$ are
defined similarly via the spectral theorem.


\section{Local asymptotics in the bulk}
\label{sec:bulk}

This section is a warm-up dealing with the spectral asymptotics in the boundaryless case. Although the estimates in this case are essentially known, we include a proof for the sake of completeness and in order to introduce the methods that will be important later on. We divide the proof of Proposition \ref{pro:bulk} into two subsections containing the lower and the upper bound, respectively. The operator 
$$
H_0 = (-h^2 \Delta)^s - 1 \qquad\mathrm{in}\ L^2(\R^d) \,,
$$
defined with form domain $H^s(\R^d)$, will appear frequently.


\subsection{Lower bound on $-\tr \lk \phi H_\Omega \phi \rk_-$}
\label{sec:bu:lo}

The lower bound is given by a variant of the Berezin--Lieb--Li--Yau inequality,
see \cite{Berezi72b,Lieb73,LiYau83}. For later purposes we record this as

\begin{lemma}
\label{lem:berezin}
For any $\phi \in L^2(\R^d)$ and $h > 0$
$$
\tr \lk \phi H_\Omega \phi \rk_- \, \leq \, \Le \int_{\R^d} \phi^2(x) \, dx \,
h^{-d} \, .
$$
\end{lemma}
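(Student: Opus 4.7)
The plan is to prove this by a direct variational/Bessel-inequality argument, which is the standard Berezin-Li-Yau trick adapted to the non-local operator. The key point is that although $H_\Omega$ is defined on $L^2(\Omega)$, its quadratic form admits the Fourier-side expression of the Introduction once we extend test functions by zero to all of $\R^d$.

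Concretely, let $\{\mu_k\}$ be the negative eigenvalues of $\phi H_\Omega \phi$ (counted with multiplicity) and let $\{u_k\}$ be a corresponding orthonormal system in $L^2(\Omega)$. Since each $\phi u_k \in \mathcal{H}^s(\Omega)$ (extended by zero to $\R^d$), I would apply the Fourier representation of the form of $H_\Omega$ to rewrite
\[
\tr(\phi H_\Omega \phi)_- = -\sum_k \mu_k = -\sum_k \langle \phi u_k, H_0 \phi u_k \rangle = \sum_k \int_{\R^d} \bigl(1 - |hp|^{2s}\bigr)\,|\widehat{\phi u_k}(p)|^2\, dp .
\]
Bounding the integrand pointwise by its positive part gives
\[
\tr(\phi H_\Omega \phi)_- \leq \int_{\R^d} \bigl(1 - |hp|^{2s}\bigr)_+ \sum_k |\widehat{\phi u_k}(p)|^2\, dp .
\]

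Next I would estimate the sum inside. Writing $\widehat{\phi u_k}(p) = (2\pi)^{-d/2} \langle e^{ip\cdot}\phi,\, u_k\rangle_{L^2(\Omega)}$ (with the understanding that $\phi u_k$ is zero off $\Omega$), Bessel's inequality applied to the orthonormal system $\{u_k\}$ and the vector $e^{ip\cdot}\phi \in L^2(\R^d)$ yields
\[
\sum_k |\widehat{\phi u_k}(p)|^2 \leq (2\pi)^{-d} \int_{\R^d} |\phi(x)|^2\, dx .
\]
Since this bound is uniform in $p$, inserting it and substituting $q = hp$ gives
\[
\tr(\phi H_\Omega \phi)_- \leq (2\pi)^{-d} \int_{\R^d} \phi^2(x)\, dx \, \int_{\R^d} \bigl(1 - |hp|^{2s}\bigr)_+ dp = \Le\, h^{-d} \int_{\R^d} \phi^2(x)\, dx ,
\]
which is exactly the claimed inequality by the definition \eqref{eq:weylconst} of $\Le$.

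There is no real obstacle here: the argument is a three-line computation once one has the Fourier representation of the form and the elementary observation that Bessel controls $\sum_k |\widehat{\phi u_k}(p)|^2$ by a quantity independent of the non-local structure. The only point requiring a word of care is the justification that the Fourier formula for $\langle u, H_\Omega u\rangle$ can be applied to each $\phi u_k$; this is immediate from the definition of $\mathcal{H}^s(\Omega)$ and the fact that multiplication by a real bounded $\phi$ preserves the support condition and the $H^s(\R^d)$ class (if $\phi$ is only in $L^2$ a routine truncation/approximation reduces to the bounded case, or one works directly with the form).
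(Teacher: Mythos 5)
Your proof is correct and is essentially the same Berezin--Li--Yau argument as in the paper: both replace the form of $H_\Omega$ by that of $H_0$ on the extended functions, drop the positive-energy part of the symbol, and control the resulting sum by a Bessel-type bound. The paper phrases the last two steps as the operator inequality $\tr(\phi H_0\phi)_-\leq\tr(\phi(H_0)_-\phi)$ followed by Fourier diagonalization, while you do them in one stroke via Bessel's inequality applied to the orthonormal system $\{u_k\}$; the content is identical.
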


\begin{proof}
We apply the variational principle for the sum of the eigenvalues
$$
-\tr \lk \phi H_\Omega \phi \rk_- \, = \, \inf_{0 \leq \gamma \leq 1} \tr  \lk \gamma \phi H_\Omega \phi \rk \, ,
$$
where the infimum is taken over all trial density matrices, i.e., over all trace-class operators $0 \leq \gamma \leq 1$ with range belonging to the form domain of $H_\Omega$. We apply this twice and find
$$
\tr \lk \phi H_\Omega \phi \rk_- \, \leq \, \tr \lk \phi H_0 \phi \rk_- 
\, \leq \, \tr \lk \phi \lk H_0 \rk_- \phi \rk \,.
$$
Applying the Fourier transform to diagonalize the operator $(H_0)_-$ yields the
bound
$$
\tr \lk \phi \lk H_0 \rk_- \phi \rk \, = \, \frac{1}{(2\pi h)^d} \iint \phi(x)^2 \lk
|p|^{2s} -1 \rk_- \, dp \, dx \, = \, \Le \int \phi(x)^2 \, dx \, h^{-d} \, ,
$$
as claimed.
\end{proof}


\subsection{Upper bound on $-\tr \lk \phi H_\Omega \phi \rk_-$}

We now assume that $\phi$ satisfies the conditions of Proposition \ref{pro:bulk}. In particular, we assume that $\phi$ has support in $\Omega$. To derive the upper bound we put $\gamma = (H_0)_-^0$, i.e.,
$$
\gamma(x,y) \, = \, (2\pi h)^{-d} \, \int_{|p|<1}  e^{ip \cdot (x-y)/h} \, dp \,,
$$
and obtain that 
\begin{align}
\label{eq:bu:trail}
- \textnormal{Tr} \lk \phi H_\Omega \phi \rk_-  
\, & \leq \,  \textnormal{Tr} \lk \gamma \phi H_\Omega \phi \rk \, = \, \textnormal{Tr} \lk \gamma \phi H_0 \phi \rk \notag \\
\, & = \,  \int_{|p|<1} \lk \| (-h^2 \Delta )^{s/2} \phi \, e^{ip \cdot \ /h }
\|^2_2 -  \left\| \phi \right\|^2_2 \rk \frac{dp}{(2 \pi h)^d} \, .
\end{align}

\begin{lemma}
\label{lem:bu:up}
For $\phi \in C_0^\infty(\R^d)$ and $h > 0$ we have
$$
\| (-h^2 \Delta )^{s/2} \phi \, e^{-ip \cdot \ /h } \|^2_2 \, = \, |p|^{2s} \left\| \phi \right\|_2^2 + \int \lk \frac 12 \lk  |p+h\eta|^{2s}  + |p-h\eta|^{2s} \rk - |p|^{2s} \rk | \hat \phi ( \eta) |^2 d\eta \, .
$$
\end{lemma}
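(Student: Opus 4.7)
The plan is a direct computation on the Fourier side. Applying Plancherel's theorem to the function $f := \phi\, e^{-ip\cdot/h}$ gives
$$
\| (-h^2\Delta)^{s/2} f \|_2^2 \, = \, \int_{\R^d} |h\xi|^{2s}\, |\hat f(\xi)|^2 \, d\xi \, .
$$
Since multiplication by $e^{-ip\cdot x/h}$ translates the Fourier transform by $p/h$, one has $\hat f(\xi) = \hat \phi(\xi + p/h)$, so the substitution $\eta = \xi + p/h$ turns the integral into
$$
\int_{\R^d} | p - h\eta |^{2s} \, | \hat \phi(\eta) |^2 \, d\eta \, .
$$

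Next I would symmetrize. The functions $\phi_u$ entering the application of this lemma are real-valued (see the construction in Section~\ref{sec:loc}), hence $\hat\phi(-\eta) = \overline{\hat\phi(\eta)}$, so $|\hat\phi(\eta)|^2$ is even in $\eta$. Performing the substitution $\eta \mapsto -\eta$ in the last integral therefore gives the same quantity with $|p - h\eta|^{2s}$ replaced by $|p + h\eta|^{2s}$, and averaging the two expressions yields
$$
\| (-h^2\Delta)^{s/2} f \|_2^2 \, = \, \int_{\R^d} \tfrac{1}{2}\lk | p+h\eta |^{2s} + | p-h\eta |^{2s} \rk \, | \hat \phi(\eta) |^2 \, d\eta \, .
$$
Finally, adding and subtracting $|p|^{2s}\int|\hat\phi(\eta)|^2 d\eta = |p|^{2s}\|\phi\|_2^2$ (by Plancherel) produces exactly the right-hand side of the lemma.

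There is no real obstacle: the only ingredient beyond Plancherel is the parity of $|\hat\phi|^2$, which requires $\phi$ to be real-valued — a property that is built into the localization scheme of Section~\ref{sec:loc} and is consistent with the way the lemma will be used in \eqref{eq:bu:trail}. The identity is exact (no error term appears) and will later be exploited by noting that the integrand in the symmetrized expression is the second-order Taylor remainder of $q\mapsto |q|^{2s}$ around $q=p$, which vanishes to second order in $h\eta$ and drives the upper bound.
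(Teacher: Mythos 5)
Your proof is correct and takes a genuinely cleaner, more direct route than the paper's. The paper expands the squared norm via Plancherel into a triple oscillatory integral over $x,y,\xi$, then inserts the algebraic identity $\phi(x)\phi(y)=\tfrac12\bigl(\phi^2(x)+\phi^2(y)-|\phi(x)-\phi(y)|^2\bigr)$ and computes the three resulting pieces separately, with further Fourier manipulations (and implicit distributional integrations) to identify $|p|^{2s}\|\phi\|_2^2$ and the correction term. You instead go straight to the translation property of the Fourier transform, arriving immediately at $\int|p-h\eta|^{2s}|\hat\phi(\eta)|^2\,d\eta$, and then symmetrize in $\eta\mapsto -\eta$ using the parity of $|\hat\phi|^2$. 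This is shorter and avoids the somewhat formal triple-integral manipulations.

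One point worth emphasizing: you correctly flag that your symmetrization step requires $\phi$ to be real-valued, and correctly note this is satisfied by the $\phi_u$ of Section~\ref{sec:loc}. The paper's proof relies on exactly the same hypothesis, but buries it inside the identity $\phi(x)\phi(y)=\tfrac12(\phi^2(x)+\phi^2(y)-|\phi(x)-\phi(y)|^2)$ (which is $\operatorname{Re}(\phi(x)\overline{\phi(y)})$ for complex $\phi$, not $\phi(x)\overline{\phi(y)}$). Indeed the lemma as stated — for arbitrary $\phi\in C_0^\infty(\R^d)$ — is false for complex-valued $\phi$, since $\int|p-h\eta|^{2s}|\hat\phi(\eta)|^2\,d\eta$ need not equal its $p\mapsto -p$ reflection; making the reality hypothesis explicit, as you do, is the more careful formulation. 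The only marginal advantage of the paper's longer route is that it previews the kind of double-Plancherel manipulation with $|\phi(x)-\phi(y)|^2$ that resurfaces in the remainder estimates of Section~\ref{sec:half}; but as a proof of this particular identity, your argument is the better one.
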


\begin{proof}
By Plancherel's Theorem we get
$$
\| (-h^2 \Delta )^{s/2} \phi \, e^{ip \cdot \ /h } \|^2_2 \, = \, (2\pi h)^{-d} \iiint |\xi|^{2s} \, \phi(x) \phi(y) \, e^{i(p-\xi)\cdot(x-y)/h}   dy d\xi  dx \, .
$$
Since $\phi \in C^\infty_0 (\R^d)$, we can use the fact that
$$
\iint \phi(x) \phi(y)  e^{i(p-\xi)\cdot(x-y)/h}  dx  dy  =  \lim_{\delta \to 0+} \iint e^{-\delta|x-y|^2} \phi(x) \phi(y)  e^{i(p-\xi)\cdot(x-y)/h} dx  dy 
$$
and since  $|\xi|^{2s} \iint \phi(x) \phi(y) e^{i(p-\xi)\cdot(x-y)/h}  dx dy$ is absolutely integrable as a function of $\xi \in \R^d$ we find 
\begin{align}
\nonumber
\lefteqn{ \| (-h^2 \Delta )^{s/2} \phi \, e^{-ip \cdot \ /h } \|^2_2} \\
\nonumber
& =  \lim_{\delta \to 0+} \iiint e^{-\delta|x-y|^2}  |\xi|^{2s} \phi(x) \phi(y) \, e^{i(p-\xi)\cdot(x-y)/h} \, \frac{ dy dx d\xi }{(2 \pi h)^d} \\
\label{eq:bu_planch}
& =  \lim_{\delta \to 0+}  \iiint e^{-\delta|x-y|^2}  |\xi|^{2s} 
\lk \phi^2(x) + \phi^2(y) - \left( \phi(x) - \phi(y) \right)^2 \rk e^{i(p-\xi)\cdot(x-y)/h} \, \frac{ dx  dy  d\xi}{2(2\pi h)^d} \, .
\end{align}
By symmetry in $x$ and $y$ the first two terms on the right side give
\begin{align*}
& \iiint e^{-\delta|x-y|^2} |\xi|^{2s}  \phi^2(x) \,   e^{i(p-\xi)\cdot(x-y)/h} \, \frac{dx  dy d\xi}{(2 \pi h)^d}  \\
& = \, \lk \frac{\pi}{\delta} \rk^{d/2}  \iiint e^{-|p-\xi|^2/(4\delta h^2)}  |\xi|^{2s}  \phi^2(x)  \frac{dx d\xi}{(2 \pi h)^d} \, .
\end{align*}
Now we can substitute $|q|^2 = |p-\xi|^2/(4\delta h^2)$ to get 
\begin{equation}
\label{eq:bu_first}
\lim_{\delta \to 0+}  \iiint e^{-\delta|x-y|^2}  |\xi|^{2s}  \lk \phi^2(x) + \phi^2(y) \rk e^{i(p-\xi)\cdot(x-y)/h} \frac{ dx  dy d\xi}{2(2\pi h)^d} \, = \, |p|^{2s} \int \phi^2(x) \, dx \, .
\end{equation}
We are left with calculating the third term on the right side of (\ref{eq:bu_planch}), namely
$$
\iiint e^{-\delta |z|^2} \, |\xi|^{2s}  \left( \phi(x) - \phi(x+z) \right)^2  e^{i(p-\xi)\cdot z/h} \, \frac{ dx  dz  d\xi}{2(2\pi h)^d}  \, .
$$
Again by Plancherel's Theorem we see that it equals
$$
\iiint e^{-\delta |z|^2} \, |\xi|^{2s}  \left| \hat \phi \lk \frac \eta h \rk \right|^2 \, \left|1 -  e^{-iz \cdot \eta /h} \right|^2  e^{i(p-\xi)\cdot z/h} \, \frac{ d\eta  dz  d\xi}{2(2\pi)^d h^{2d}} \, .
$$
We can write  
$$
\left|1 -  e^{-iz \cdot \eta /h} \right|^2   \, = \, 2 - e^{iz\cdot \eta/h}- e^{-iz\cdot \eta/h}
$$
and from the first summand we get
$$
\iiint e^{-\delta |z|^2}  |\xi|^{2s}  \left| \hat \phi \lk \frac \eta h \rk \right|^2 e^{i(p-\xi)\cdot z /h}  \frac{ d\eta  dz  d\xi}{(2\pi)^d h^{2d}}  =  \iint e^{-|q|^2}  |p+2h\sqrt \delta q|^{2s}  \left| \hat \phi \lk \frac \eta h \rk \right|^2 \frac{ d\eta  dq}{\pi^{d/2} h^{d}} \, . 
$$
In the same way we can treat the second and third summand and after taking the limit $\delta \to 0+$ we finally find 
\begin{align}
\nonumber
& \lim_{\delta \to 0+}  \iiint e^{-\delta|x-y|^2} \, |\xi|^{2s}  \left( \phi(x) - \phi(y) \right)^2  e^{i(p-\xi)\cdot(x-y)/h} \frac{ dx  dy  d\xi}{2(2\pi h)^d} \\
\label{eq:bu_second}
& \, = \, \frac{1}{h^d} \int \lk |p|^{2s} - \frac 12 \lk  |p+\eta|^{2s}  + |p-\eta|^{2s} \rk \rk \left| \hat \phi \lk \frac{\eta}{h}  \rk \right|^2 d\eta \, .
\end{align}
Hence, combining (\ref{eq:bu_planch}), (\ref{eq:bu_first}) and (\ref{eq:bu_second}) yields the claim.
\end{proof}

In view of identity (\ref{eq:bu:trail}) and Lemma \ref{lem:bu:up} we conclude
\begin{equation}
\label{eq:bu_traceres}
\textnormal{Tr} \lk \gamma \phi H_0 \phi \rk \, = \, (2 \pi h)^{-d} \int_{|p| < 1} \lk |p|^{2s} - 1 \rk dp \left\| \phi \right\|^2_2 + (2 \pi h)^{-d} \int_{|p|<1} R_h(p) \, dp
\end{equation}
with
\begin{equation*}
R_h(p) \, = \, \int  \lk \frac 12 \lk  |p+h\eta|^{2s}  + |p-h\eta|^{2s} \rk - |p|^{2s} \rk |\hat \phi (\eta)|^2 d\eta \, .
\end{equation*}
We proceed to estimate $R_h(p)$. Note that for any $a>0$
$$
\max_{|t|\leq a} \left( (a+t)^s + (a-t)^s \right) \, = \, 2 a^s \,.
$$
Taking $a=|p|^2+|\eta|^2$ and $t=2p\cdot\eta$ we deduce that
$$
\frac12 \left( |p+\eta|^{2s} + |p-\eta|^{2s} \right) - |p|^{2s} 
\, \leq \, (|p|^2+|\eta|^2)^s - |p|^{2s} \,.
$$
Next, for $0<s<1$ concavity implies that $(a+b)^s \leq a^s + s a^{s-1} b$ for $a,b>0$, from which we learn that
$$
(|p|^2+|\eta|^2)^s - |p|^{2s} \, \leq \, s \, |p|^{2(s-1)} \, |\eta|^2 \,.
$$
Hence, replacing $\eta$ with $h \eta$ and using (\ref{eq:int:gradphi}) we can estimate
$$
R_h(p) \, \leq \, s\int |p|^{-2+2s} |h\eta|^2 |\hat \phi (\eta) |^2 d\eta \, = \, s \, |p|^{-2+2s} \, h^2 \int |\nabla \phi|^2 dx \, \leq \, C  h^2 |p|^{-2+2s} l^{d-2}\, .
$$
Thus the upper bound follows from (\ref{eq:bu:trail}) and 
(\ref{eq:bu_traceres}).


\section{Asymptotics on the half-space}
\label{sec:half}


Our goal in this section is to prove the analogue of Proposition \ref{pro:boundary} in the case where $\Omega$ is the half-space $\R^d_+=\{(x',x_d):\ x_d>0\}$. We define the operator $H^+$ on $L^2(\R^d_+)$, in the same way as $H_\Omega$, with form domain 
$$
\mathcal{H}^s(\R^d_+) \, = \, \left\{ v \in H^s(\R^d) \, : \, v \equiv 0 \
\textnormal{on} \ \R^d \setminus \overline{\R^d_+} \right\} \, .
$$
We shall prove

\begin{proposition}
\label{lem:mod}
Assume that $\phi \in C_0^1(\R^d)$ is supported in a ball of radius $l > 0$ and
assume that (\ref{eq:int:gradphi}) is satisfied. Then for $h > 0$ and any $0 <
\delta_1 < 1$ and $0 < \delta_2 < \min\{1,2s\}$ we have
\begin{align*}
& -C_{\delta_1,\delta_2} \lk l^{d-1-\delta_1} h^{-d+1+\delta_1}+ l^{d-1-\delta_2} h^{-d+1+\delta_2} \rk \\
&\leq \, \tr \lk  \phi H^+  \phi \rk_- -   \Le \int_{\R^d_+} \phi^2(x) dx  h^{-d} + \Lz \int_{\R^{d-1}} \phi^2(x',0) dx'  h^{-d+1} \\
& \leq \, C_{\delta_1} l^{d-1-\delta_1} h^{-d+1+\delta_1} \, .
\end{align*}
\end{proposition}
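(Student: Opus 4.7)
Plan: Exploit the invariance of $H^+$ under the $(d-1)$-parameter group of tangential translations $x'\mapsto x'+a$, $a\in\R^{d-1}$. Under the partial Fourier transform $\mathcal F_{x'}$ in the tangential variables one obtains the direct integral decomposition
$$
\mathcal F_{x'}H^+\mathcal F_{x'}^{*}\,=\,\int^{\oplus}_{\R^{d-1}}\bigl(h^{2s}A^{+}_{p'}-1\bigr)\,dp',
$$
where $A^{+}_{p'}$ is the one-dimensional operator on $L^{2}(\R_+)$ defined by the quadratic form $\int_\R(|p'|^2+\xi^2)^s|\hat w(\xi)|^2\,d\xi$ with Dirichlet extension by zero to $\R_-$. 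The dilation $w(t)\mapsto |p'|^{1/2}w(|p'|t)$ conjugates $A^{+}_{p'}$ into $|p'|^{2s}A^{+}$, where $A^+$ is the universal half-line model of Section~\ref{sec:half}; an analogous decomposition holds for $H_0=(-h^2\Delta)^s-1$ on $L^2(\R^d)$ with fiber $h^{2s}A_{p'}-1$ on $L^2(\R)$, and by translation invariance the diagonal of $(H_0)_-$ equals $\Le\,h^{-d}$ at every point.

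\textbf{Upper bound.} The elementary operator inequality $(\phi B\phi)_-\leq\phi B_-\phi$ (used in the proof of Lemma~\ref{lem:berezin}) gives
$$
\tr(\phi H^+\phi)_-\,\leq\,\tr(\phi^2 H^+_-)\,=\,\int_{\R^d_+}\phi^2(x)\,g^+(x_d)\,dx,\qquad g^+(t):=\int_{\R^{d-1}}(h^{2s}A^+_{p'}-1)_-(t,t)\,\frac{dp'}{(2\pi)^{d-1}}.
$$
Expressing the fiber diagonal through the model kernel $a^+(|p'|t,|p'|t,(h|p'|)^{-2s})$ via the scaling above and adding/subtracting the whole-line counterpart $a(|p'|t,|p'|t,(h|p'|)^{-2s})$, one obtains
$$
g^+(t)\,=\,\Le\,h^{-d}\,+\,h^{-d}f(t/h)\,+\,\text{lower order},
$$
with an integrable profile $f$ on $\R_+$ satisfying $\int_0^\infty f(y)\,dy=-\Lz$. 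Expanding $\phi^2(x',x_d)=\phi^2(x',0)+O(l^{-1}x_d)$ from $\|\nabla\phi\|_\infty\leq Cl^{-1}$, and integrating first in $x_d$ and then in $x'$, reconstructs the surface term $-\Lz\,h^{-d+1}\int\phi^2(x',0)\,dx'$ up to an error of the required order $l^{d-1-\delta_1}h^{-d+1+\delta_1}$ for any $\delta_1<1$.

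\textbf{Lower bound.} I would use the variational principle with the trial density matrix $\Gamma=\mathcal F_{x'}^*\bigl(\int^{\oplus}(h^{2s}A^+_{p'}-1)_-^0\,dp'\bigr)\mathcal F_{x'}$, the spectral projector of $H^+$ onto its negative subspace. Cyclic manipulation gives
$$
\tr(\phi H^+\phi)_-\,\geq\,-\tr(\Gamma\phi H^+\phi)\,=\,\tr(\phi^2 H^+_-)\,-\,\tr\bigl(\Gamma\phi[H^+,\phi]\bigr).
$$
The first term reproduces the Weyl plus surface contribution exactly as in the upper bound, while the non-local commutator $[H^+,\phi]$ is controlled through $\|\nabla_{x'}\phi\|_\infty\leq Cl^{-1}$ together with H\"older-type regularity of the model kernel $a^+(t,t,\mu)$ in the spectral parameter $\mu$. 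The best available H\"older exponent is only $\min(1,2s)^-$, which is precisely what forces the restriction $0<\delta_2<\min(1,2s)$ in the lower bound.

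\textbf{Main obstacle.} The decisive analytic step is the identification of the surface density with the universal function $\zeta(|p'|^{-2s})$ --- in particular the proof that
$$
\Lz\,=\,(2\pi)^{-(d-1)}\int_{\R^{d-1}}\zeta(|p'|^{-2s})\,dp'
$$
is finite and positive, and the derivation of sharp pointwise and integrated control on the kernel difference $a(t,t,\mu)-a^+(t,t,\mu)$ uniformly in $\mu>0$. Both ingredients rely on Kwa\'snicki's almost-explicit spectral resolution of the half-line model $A^+$, and packaging these one-dimensional facts --- developed in Section~\ref{sec:half} and the appendix --- into the $p'$-integral is where the bulk of the technical work will lie.
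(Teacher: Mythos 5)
Your skeleton is the same as the paper's: the direct-integral decomposition in the tangential Fourier variable, the scaling that reduces the fiber operators to the universal half-line model $A^+$, the trial state $(H^+)_-^0$ for the lower bound, and the bound $\tr(\phi H^+\phi)_-\leq\tr(\phi^2(H^+)_-)$ for the upper bound (the paper organizes this via Proposition~\ref{lem:modred} and the function $K$). Two small remarks on the upper bound: the diagonal identity \eqref{eq:bdlocomp} is exact, so the ``$+\,$lower order'' you write is extraneous; and the error of order $l^{d-1-\delta_1}h^{-d+1+\delta_1}$ does not follow from the naive Taylor estimate $\phi^2(x',x_d)=\phi^2(x',0)+O(l^{-1}x_d)$ alone, since $\int_0^\infty t\,|K(t)|\,dt$ is not known to be finite; one needs the H\"older-in-$x_d$ argument combined with $\int_0^\infty t^{\delta_1}|K(t)|\,dt<\infty$ from Lemma~\ref{aest}, with $\delta_1<1$ strict.

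The real gap is in your lower bound, in the control of $\tr(\Gamma\phi[H^+,\phi])$, which is the hardest step of the whole proposition. Your proposed mechanism --- ``H\"older-type regularity of the model kernel $a^+(t,t,\mu)$ in the spectral parameter $\mu$'' with exponent $\min(1,2s)^-$ --- is not what is at play, and I do not see how one could make it work: the commutator remainder, once symmetrized, is $\tfrac12\iint\Gamma(x,y)\,H^+(x,y)\,|\phi(x)-\phi(y)|^2\,dx\,dy$ (the paper's $R_h[\phi]$), and after writing $\Gamma$ in terms of the kernel $e^+$ via \eqref{eq:projh+}, no regularity of $a^+$ in $\mu$ enters. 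The paper instead gains the crucial power of $h$ by inserting $e^{i\xi'\cdot(x'-y')/h}=h^{2\sigma}|\xi'|^{-2\sigma}(-\Delta_{x'})^\sigma e^{i\xi'\cdot(x'-y')/h}$ and integrating by parts in $x'$; it then uses the a priori bound $|e^+(t,u,\mu)|\leq C\mu^{1/2s}$ from Lemma~\ref{eest} (together with $e^+(\cdot,\cdot,\mu)=0$ for $\mu\leq 1$) to control the $\xi'$-integral, and the dedicated estimate
$$
\int_{\R^d}\int_{\R^d}\left|(-\Delta_{x'})^\sigma\,\frac{|\phi(x)-\phi(y)|^2}{|x-y|^{d+2s}}\right| dx\,dy\leq C
$$
from Lemma~\ref{lem:remest}, valid precisely for $\tfrac12-s<\sigma<\min\{\tfrac12,1-s\}$. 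The restriction $0<\delta_2<\min\{1,2s\}$ is just this range of $\sigma$ rewritten via $\delta_2=2s+2\sigma-1$; it is forced by the integrability of the fractional tangential derivative of the localization kernel, not by any modulus of continuity of $a^+$ in $\mu$. Without this step your lower bound does not close, so you would need to either reproduce this integration-by-parts and the technical Lemma~\ref{lem:remest}, or supply a genuinely different and equally quantitative bound on the commutator term.
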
 

This result depends on a more or less explicit diagonalization of the operator $H^+$, which is far from obvious. This is accomplished in Subsections \ref{ssec:modelhl} and \ref{ssec:hshl}, relying crucially on recent results of Kwa\'snicki \cite{Kwasni10a} about non-local operators on a half-line. These results are collected and extended to our needs in the appendix.


\subsection{The model operator on the half-line}\label{ssec:modelhl}

In this subsection we collect some facts about the one-dimensional operator
$$
A^+ \, = \, \lk -\frac{d^2}{dt^2} + 1 \rk^s
$$
in $L^2(\R_+)$ with form domain $\mathcal{H}^s(\R_+)$, and about the corresponding operator
$A$ in $L^2(\R)$, defined analogously to $A^+$, but with form domain $H^s(\R)$.

For $\mu > 0$ and $t,u \in \R_+$, let $e^+(t,u,\mu)$ and $a^+(t,u,\mu)$ be the
integral kernels of $(A^+-\mu)_-^0$ and $(A^+-\mu)_-$, respectively. Similarly, we
define $a(t,u,\mu)$ via $(A-\mu)_-$. To simplify notation we abbreviate
$a^+(t,\mu) = a^+(t,t,\mu)$. We also note that $a(\mu)=a(t,t,\mu)$ is
independent of $t\in\R_+$. The inequality $A^+\geq 1$ implies that $a^+(t,u,\mu)=e^+(t,u,\mu)=0$ for $\mu< 1$ and similarly for $a(t,u,\mu)$ and $e(t,u,\mu)$.

The following two results about $e^+(t,\mu)$ and $a^+(t,\mu)$ are rather technical and we defer the proofs to Appendices \ref{ap:eest} and \ref{ap:aest}. The first one provides a rough a-priori bound on $e^+(t,u,\mu)$.

\begin{lemma}
 \label{eest}
For any $\mu > 0$ and $t,u \in \R_+$ one has $|e^+(t,u,\mu)| \leq C
\mu^{1/2s}$.
\end{lemma}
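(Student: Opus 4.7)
My plan is to establish the estimate via a spectral resolution of $A^+$ in terms of a family of generalized eigenfunctions, following Kwa\'snicki's work \cite{Kwasni10a} on non-local half-line operators. Since $A^+ \geq 1$, the kernel $e^+(t,u,\mu)$ vanishes identically for $\mu < 1$, so only the case $\mu \geq 1$ is nontrivial.

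First I would invoke (and verify in Appendix \ref{ap:eest}, suitably adapting Kwa\'snicki's construction to accommodate the mass term in $A^+$) that $A^+$ has purely absolutely continuous spectrum $[1,\infty)$ admitting a generalized eigenfunction expansion: there exists a family $\{F_\xi\}_{\xi>0}$ of bounded distributional solutions of $A^+ F_\xi = (\xi^2+1)^s F_\xi$ such that the map $f \mapsto \tilde f$, with $\tilde f(\xi) := \int_0^\infty f(t)\,\overline{F_\xi(t)}\, dt$, extends to a unitary transform from $L^2(\R_+)$ to a suitable $L^2$-space on $(0,\infty)$. The crucial input, which is really the heart of the matter, is the uniform pointwise estimate $|F_\xi(t)| \leq C$ for all $t,\xi > 0$.

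Granted this decomposition, the functional calculus expresses the spectral projector kernel as
$$
e^+(t,u,\mu) \,=\, \frac{1}{\pi}\int_0^{\xi_*(\mu)} F_\xi(t)\,\overline{F_\xi(u)}\, d\xi, \qquad \xi_*(\mu) := (\mu^{1/s}-1)^{1/2},
$$
because the condition $(\xi^2+1)^s < \mu$ with $\xi > 0$ is equivalent to $0 < \xi < \xi_*(\mu)$. The uniform bound then immediately yields
$$
|e^+(t,u,\mu)| \,\leq\, \frac{C^2}{\pi}\, \xi_*(\mu) \,\leq\, \frac{C^2}{\pi}\, \mu^{1/(2s)},
$$
using $\xi_*(\mu)^2 = \mu^{1/s} - 1 \leq \mu^{1/s}$ in the last step.

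The main obstacle is not this concluding calculation but the preparatory step. Kwa\'snicki's theorems are formulated for operators of the form $\psi(-d^2/dt^2)$ on a half-line with $\psi$ a complete Bernstein function, so one has to check that $\psi(\tau)=(\tau+1)^s$ fits into his framework and that the oscillatory representation he derives for the generalized eigenfunctions leads to the uniform $L^\infty$ estimate above, which is not literally stated in \cite{Kwasni10a}. Carrying out this reduction and extracting the quantitative eigenfunction bound is exactly the content of the appendix.
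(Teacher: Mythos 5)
Your proposal matches the paper's argument: both rest on Kwa\'snicki's generalized eigenfunction expansion of $A^+$ (stated in the paper as Theorem \ref{thm:kwa}), the uniform bound $|F_\lambda(t)|\leq 2$ deduced there from the explicit representation \eqref{eq:ap:eig} and the complete monotonicity of $\gamma_\lambda$'s Laplace transform, and the resulting integral estimate $|e^+(t,u,\mu)|\leq C\int_0^{(\mu^{1/s}-1)_+^{1/2}} d\lambda \leq C\mu^{1/(2s)}$. You correctly identify the uniform eigenfunction bound as the crux and defer it to the appendix, which is precisely what the paper does.
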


The second result in this subsection quantifies that $a^+(t,\mu)$ is close to $a(\mu)$ for large $t$.

\begin{lemma}
\label{aest}
For any $0 \leq \gamma < 1$ there is a constant $C_\gamma$ such that for all
$\mu\geq 1$,
\begin{equation}
 \label{eq:aest}
\int_0^\infty t^\gamma | a^+(t,\mu) -a(\mu) |\,dt \leq C_\gamma \, \mu \lk (\ln \mu )^2 + 1 \rk \,.
\end{equation}
In particular, the function
\begin{equation}
\label{eq:K}
K(t) \, = \, \frac{1}{(2\pi)^{d-1}} \int_{\R^{d-1}} |\xi'|^{1+2s} \lk
a(|\xi'|^{-2s})-a^+(t|\xi'|,|\xi'|^{-2s}) \rk d\xi' \,, \qquad t>0\,,
\end{equation}
satisfies for every $0\leq \gamma<1$
$$
\int_0^\infty t^\gamma \, |K(t)| \, dt \, < \, \infty \, .
$$
\end{lemma}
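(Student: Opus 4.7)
I would treat the two parts of Lemma~\ref{aest} in succession. The key input is an (almost) explicit spectral decomposition of $A^+$ coming from Kwa\'snicki's work \cite{Kwasni10a} (reviewed in the appendix), which furnishes generalized eigenfunctions $\phi_\xi$ with eigenvalue $(\xi^2+1)^s$ and a representation
\begin{equation*}
a^+(t,\mu) \, = \, \int_0^{\xi_\mu} \lk \mu - (\xi^2+1)^s \rk |\phi_\xi(t)|^2 \, d\xi \, , \qquad \xi_\mu := \sqrt{\mu^{1/s}-1} \, ,
\end{equation*}
together with a quantitative asymptotic of the form $\phi_\xi(t) = \pi^{-1/2}\sin(\xi t + \vartheta(\xi)) + r_\xi(t)$ with $r_\xi$ controllable in $t$. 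Using the Fourier representation for $A$, one obtains the analogous formula for $a(\mu)$ with $|\phi_\xi(t)|^2$ replaced by the constant $1/\pi$, and hence a clean expression for the difference $a^+(t,\mu)-a(\mu)$.

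To prove \eqref{eq:aest}, I would split $\int_0^\infty t^\gamma|a^+(t,\mu)-a(\mu)|\,dt$ at a threshold $T = T(\mu)$ to be optimized. For $t \leq T$ the crude bound $|a^+(t,\mu)|,|a(\mu)| \leq C\mu^{1+1/(2s)}$, obtained by integrating Lemma~\ref{eest} in the spectral parameter, contributes at most $C T^{\gamma+1}\mu^{1+1/(2s)}$. For $t \geq T$, I would insert the asymptotic expansion of $\phi_\xi$: the principal oscillatory piece $\cos(2\xi t + 2\vartheta(\xi))$ can be handled by integration by parts in $\xi$ (producing a factor $1/t$ together with boundary terms that themselves oscillate in $t$), while the error terms involving $r_\xi$ give an absolutely integrable tail against $t^\gamma\,dt$ since $\gamma<1$. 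Choosing $T$ as a suitable power of $\mu$ and tracking the spectral-edge contribution at $\xi = 0$ (which is the source of the logarithmic factor) should then yield \eqref{eq:aest}.

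For the statement about $K(t)$, I would apply Tonelli's theorem and substitute $\tau = t|\xi'|$, so that $\mu = |\xi'|^{-2s}$ becomes the natural spectral parameter. Since $a(\mu)$ and $a^+(\cdot,\mu)$ both vanish for $\mu < 1$, the $\xi'$-integration reduces to the unit ball $|\xi'| \leq 1$, and applying \eqref{eq:aest} to the resulting $\tau$-integral gives
\begin{equation*}
\int_0^\infty t^\gamma|K(t)|\,dt \, \leq \, C_\gamma \int_0^1 r^{d-2-\gamma}\lk (\ln r^{-1})^2 + 1 \rk dr \, ,
\end{equation*}
which is finite whenever $\gamma < d-1$, and in particular for $\gamma < 1$ since $d \geq 2$.

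The main obstacle will be the tail analysis in the first step: obtaining enough oscillatory cancellation in the $\xi$-integral on $[T,\infty)$ requires rather precise information about the generalized eigenfunctions $\phi_\xi$ and the phase shift $\vartheta(\xi)$, especially near the spectral edge $\xi = 0$. This information is available from Kwa\'snicki's construction, but bringing it into a form that yields a weighted $L^1(t^\gamma\,dt)$ bound with explicit polynomial--logarithmic dependence on $\mu$ is the technically delicate part.
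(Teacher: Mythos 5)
Your plan is built on the right ingredients---Kwa\'snicki's diagonalization, the expansion $F_\lambda(t)=\sin(\lambda t+\vartheta_\lambda)+G_\lambda(t)$, oscillatory cancellation in the spectral variable, and (for the $K$-statement) the reduction to $|\xi'|<1$ followed by the substitution $\tau=t|\xi'|$---and your second paragraph about $K(t)$ reproduces the paper's argument exactly. But the proof strategy you propose for \eqref{eq:aest} has a real gap, and the paper organizes things differently precisely to avoid it.

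The problem is the $\mu$-dependent threshold $T(\mu)$ combined with the crude bound from Lemma~\ref{eest}. The crude bound gives $|a^+(t,\mu)-a(\mu)|\lesssim \mu^{1+1/(2s)}$, a full factor $\mu^{1/(2s)}$ larger than the target $\mu(\ln\mu)^2$, so the contribution $T^{\gamma+1}\mu^{1+1/(2s)}$ forces $T$ to be a negative power of $\mu$. At the same time, the $t\geq T$ piece cannot be controlled by a single integration by parts: a factor $1/t$ alone leaves $\int_T^\infty t^{\gamma-1}\,dt$, which diverges for every $\gamma\geq 0$. At least two integrations by parts in $\lambda$ are required to produce $t^{-2}$, and then the surviving boundary term at $\lambda=\xi_\mu$ (together with the $d\vartheta_\lambda/d\lambda$ corrections) makes the $t\geq T$ contribution grow like $T^{\gamma-1}$, which blows up as $T\downarrow 0$. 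Whether a power $T=\mu^{-a}$ can be threaded between these two constraints is a delicate bookkeeping exercise that your sketch does not address, and in any case the constants will degenerate as $\gamma\to 0$ or $\gamma\to1$. The paper sidesteps all of this: it never uses Lemma~\ref{eest} in this proof. Instead it first decomposes $1-2F_\lambda^2 = \cos(2\lambda t+2\vartheta_\lambda) - 4\sin(\lambda t+\vartheta_\lambda)G_\lambda(t) - 2G_\lambda(t)^2$, handles the two $G_\lambda$ terms directly via the Laplace transform $g_\lambda$ (interpolating $\int_0^\infty t^\gamma G_\lambda\,dt$ between $g_\lambda(0)$ and $g_\lambda'(0)$), and treats only the cosine term by splitting at the \emph{fixed} threshold $t=1$, with one integration by parts on $[0,1]$ and two on $[1,\infty)$.

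Two smaller points. First, your explanation of where $\gamma<1$ matters is misplaced: $G_\lambda$ decays exponentially (its Laplace transform is a bounded completely monotone function and $\gamma_\lambda$ is supported on $[1,\infty)$), so the ``error terms'' are integrable against $t^\gamma\,dt$ for \emph{every} $\gamma\geq 0$; the restriction $\gamma<1$ is needed to make $\int_1^\infty t^{\gamma-2}\,dt$ finite after the double integration by parts, and, in the $K(t)$ computation, to ensure $\gamma<d-1$. Second, the logarithms do not primarily come from the spectral edge $\xi=0$ but from the asymptotics $d\vartheta_\lambda/d\lambda = O(1/\lambda)$ at large $\lambda$ (Lemma~\ref{lem:ap:rem1}), which after integration against $(\mu-(\lambda^2+1)^s)$ over $\lambda\in(0,\xi_\mu)$ produces $\ln\xi_\mu\sim\ln\mu$.
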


With this lemma at hand we can now define the constant $\Lz$ which appears in our main theorem by
\begin{equation}
\label{eq:mod:lz}
\Lz \, = \, \int_0^\infty K(t) \, dt \, .
\end{equation}
(This integral converges by Lemma \ref{aest}.) Expression \eqref{eq:mod:lz}
suffices for the proof of our main result. In Section \ref{sec:snd}, see also
\eqref{eq:ap:lz}, we will derive different representations for $\Lz$.


\subsection{Reduction from the half-space to the half-line}\label{ssec:hshl}

Our goal in this subsection is to write the spectral projections of the
operator $H^+$ on the half-space in terms of those of the operator $A^+$ on the
half-line. Since $H^+$ commutes with translations parallel to the boundary of $\R^d_+$, it can be written as a direct integral; see, e.g., \cite[Sec. XIII.16]{ReSi} for definitions and properties of direct integrals.

\begin{lemma}
\label{lem:bdlo}
The mapping
$$
\left(\mathcal U f\right)_{\xi'} (t) = (2\pi h)^{-(d-1)/2} h^{1/2} |\xi'|^{-1/2} \int_{\R^{d-1}} f(x',|\xi'|^{-1} h t) e^{-i\xi'\cdot x'/h} \,dx' \,,
\quad \xi'\in\R^{d-1}\,,\ t>0 \,,
$$
defines a unitary operator from $L^2(\R^d_+)$ to $\int_{\R^{d-1}}^\oplus L^2(0,\infty) \,d\xi'$. Moreover,
$$
\mathcal U \left( H^++1\right) \mathcal U^* = \int_{\R^{d-1}}^\oplus |\xi'|^{2s} A^+ \,d\xi' \,.
$$
\end{lemma}

Before giving the proof we show how to deduce formulas for spectral projections.

\begin{corollary}\label{cor:bdlo}
For $x = (x',x_d) \in \R^d_+$ and $y = (y',y_d) \in \R^d_+$ the integral kernels of $(H^+)_-^0$ and $(H^+)_-$ are related to those of $(A^+-\mu)_-^0$ and $(A^+-\mu)_-$ by
\begin{equation}
 \label{eq:projh+}
(H^+)_-^0(x,y) = \frac{1}{h^d} \int_{\R^{d-1}} |\xi'|
e^{i \xi'\cdot(x'-y') /h} \, e^+ \lk \frac{x_d |\xi'|} h , \frac{y_d
|\xi'|} h,\frac1{|\xi'|^{2s}} \rk \frac{d\xi'}{(2\pi)^{d-1}}
\end{equation}
and
\begin{equation}
 \label{eq:densh+}
(H^+)_-(x,y) = \frac{1}{h^d} \int_{\R^{d-1}}
|\xi'|^{1+2s} \, e^{i\xi'\cdot (x'-y')/h} \, a^+ \lk \frac{x_d |\xi'|}h,
\frac{y_d |\xi'|}h, \frac1{|\xi'|^{2s}} \rk \frac{d\xi'}{(2\pi)^{d-1}} \, .
\end{equation}
\end{corollary}

\begin{proof}[Proof of Corollary \ref{cor:bdlo}]
Lemma \ref{lem:bdlo} and the spectral theorem imply that for any bounded, measurable function $\phi$ on $\R$,
$$
\mathcal U \, \phi(H^++1) \, \mathcal U^* = \int_{\R^{d-1}}^\oplus \phi(|\xi'|^{2s} A^+) \,d\xi' \,.
$$
This formula means that for any $f\in L^2(\R^d_+)$,
$$
(f,\phi(H^++1) f) = \int_{\R^{d-1}} \left( \left( \mathcal U f\right)_{\xi'} , \phi(|\xi'|^{2s} A^+) \left( \mathcal U f\right)_{\xi'} \right) \,d\xi' \,.
$$
From this, we easily conclude that if $\phi(|\xi'|^{2s} A^+)$ has an integral kernel for all $\xi'\in\R^{d-1}$, then $\phi(H^++1)$ has an integral kernel given by
$$
\phi(H^++1)(x,y) = \frac{1}{h^d} \int_{\R^{d-1}} |\xi'|
e^{i \xi'\cdot(x'-y') /h} \, \phi(|\xi'|^{2s} A^+) \lk \frac{x_d |\xi'|} h , \frac{y_d
|\xi'|} h \rk \frac{d\xi'}{(2\pi)^{d-1}} \,.
$$
The corollary now follows from the fact that for $\phi(E)=(E-1)_-^0$ one has $\phi(|\xi'|^{2s} A^+) = (A^+ - |\xi'|^{-2s})_-^0$ and for $\phi(E)=(E-1)_-$ one has $\phi(|\xi'|^{2s} A^+) = |\xi'|^{2s} (A^+ - |\xi'|^{-2s})_-$.
\end{proof}

We now give the

\begin{proof}[Proof of Lemma \ref{lem:bdlo}]
The fact that $\mathcal U$ is unitary follows from Plancherel's theorem together with a dilation. To prove the formula for $H^+$, let $f\in \mathcal{H}^s(\R^d_+)$, the form domain of $H^+$, and denote by $\hat f$ as before the Fourier transform of $f$ with respect to both $x'$ and $x_d$. Since $f\in \mathcal{H}^s(\R^d_+)$, its extension to $\R^d$ by zero belongs to $H^s(\R^d)$ and we can also extend $(\mathcal Uf)_{\xi'}$ by zero to $\R$. A short computation shows that
$$
\frac{1}{\sqrt{2\pi}} \int_\R \left( \mathcal U f\right)_{\xi'} (t) e^{-i\omega t} \,dt
= h^{-d/2} |\xi'|^{1/2} \hat f(h^{-1} \xi',h^{-1}|\xi'|\omega) \,,
$$
and thus,
\begin{align*}
\int_{\R^d} |hp|^{2s}|\hat f(p)|^2 \,dp & = \int_{\R^{d-1}} \left( \int_{\R} \left( |hp'|^2 + (hp_d)^2\right)^s |\hat f(p',p_d)|^2 \,dp_d \right) \,dp' \\
& = h^{-d} \int_{\R^{d-1}} |\xi'|^{1+2s} \left( \int_{\R} \left( 1 + \omega^2\right)^s |\hat f(h^{-1}\xi',h^{-1} |\xi'|\omega)|^2 \,d\omega \right) \,d\xi' \\
& = \int_{\R^{d-1}} |\xi'|^{2s} \left( \int_{\R} \left( 1 + \omega^2\right)^s 
\left| \frac{1}{\sqrt{2\pi}} \int_\R \left( \mathcal U f\right)_{\xi'} (t) e^{-i\omega t} \,dt \right|^2 \,d\omega \right) \,d\xi' \,.
\end{align*}
Since $\left( \mathcal U f\right)_{\xi'}$ vanishes on $(-\infty,0)$, the previous formula can be rewritten as
$$
\int_{\R^d} |hp|^{2s}|\hat f(p)|^2 \,dp = \int_{\R^{d-1}} |\xi'|^{2s} \left\| \left(A^+\right)^{1/2} \left( \mathcal U f\right)_{\xi'} \right\|^2 \,d\xi' \,.
$$
This is equivalent to $\mathcal U \left( H^++1\right) \mathcal U^* = \int_{\R^{d-1}}^\oplus |\xi'|^{2s} A^+ \,d\xi'$ and concludes the proof.
\end{proof}


\subsection{Proof of Proposition \ref{lem:mod}}

Our next step is to state upper and lower bounds on $-\tr \lk \phi H^+ \phi \rk_-$ in terms of the one-dimensional model operators $A$ and $A^+$, in particular, in terms of the function $K(t)$ given in \eqref{eq:K}. As explained below, the main result of this section, Proposition \ref{lem:mod}, will be a direct consequence of the following estimates.

\begin{proposition}\label{lem:modred}
Assume that $\phi \in C_0^1(\R^d)$ is supported in a ball of radius $l =1$ and assume that (\ref{eq:int:gradphi}) is satisfied with $l=1$. Then for any $0 < \delta_2 < \min\{1,2s\}$ there is a constant $C_{\delta_2}$ such that for all $h > 0$ we have
\begin{align}
\label{eq:mod:main1} 
-\tr \lk \phi H^+ \phi \rk_- &\geq  -\Le \int_{\R^d_+} \phi^2(x)  dx h^{-d}  +
\int_{\R^d_+} \phi^2(x)   \frac 1h K\lk \frac{x_d}{h} \rk  dx  h^{-d+1} \, , \\
\label{eq:mod:main2}
-\tr \lk \phi H^+ \phi \rk_- &\leq  -\Le \int_{\R^d_+} \phi^2(x)  dx h^{-d}  +
\int_{\R^d_+} \phi^2(x)   \frac 1h K\lk \frac{x_d}{h} \rk  dx  h^{-d+1}   + C_{\delta_2} h^{-d+1+\delta_2} \,.
\end{align}
\end{proposition}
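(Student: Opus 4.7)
The plan is to reduce $\tr (\phi H^+ \phi)_-$ to the explicit diagonal kernel $(H^+)_-(x,x)$ coming from \eqref{eq:densh+}, and then bound it from above by an operator inequality (giving the exact upper bound \eqref{eq:mod:main1}) and from below by a Berezin-type variational argument with trial density $(H^+)^0_-$, the error being a double commutator.

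First I compute the diagonal. Setting $x=y$ in \eqref{eq:densh+} and splitting $a^+(t,\mu) = a(\mu) - (a(\mu)-a^+(t,\mu))$ gives
\begin{equation*}
(H^+)_-(x,x) \, = \, \frac{1}{h^d (2\pi)^{d-1}} \int_{\R^{d-1}} |\xi'|^{1+2s} a(|\xi'|^{-2s})\, d\xi' \, - \, \frac{1}{h^d} K(x_d/h) \, .
\end{equation*}
The substitution $p_d = |\xi'| k$ in the Fourier formula $a(\mu) = (2\pi)^{-1} \int_\R ((1+k^2)^s - \mu)_-\, dk$ identifies the first term with $\Le\, h^{-d}$, while the second is precisely the definition \eqref{eq:K} of $K$; the integrability needed to split the integral is supplied by Lemma~\ref{aest}. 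Hence $(H^+)_-(x,x) = \Le\, h^{-d} - h^{-d}\,K(x_d/h)$.

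For the upper bound, I use that $(H^+)_+ = H^+ + (H^+)_- \geq 0$, which after conjugation by $\phi$ yields the operator inequality $-\phi H^+ \phi \leq \phi (H^+)_- \phi$. Trace monotonicity $\tr A_+ \leq \tr B_+$ for $A \leq B$ (itself a consequence of $\tr A_+ = \sup_{0 \leq \gamma \leq 1}\tr(\gamma A)$), together with $\phi(H^+)_-\phi \geq 0$, then gives
\begin{equation*}
\tr(\phi H^+ \phi)_- \, \leq \, \tr(\phi (H^+)_- \phi) \, = \, \int \phi^2(x)\, (H^+)_-(x,x)\, dx \, ,
\end{equation*}
which combined with the diagonal formula is exactly \eqref{eq:mod:main1}. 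The absence of a remainder here parallels the Berezin--Lieb--Li--Yau bound of Lemma~\ref{lem:berezin}.

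For the lower bound, the variational principle with trial density $\gamma = (H^+)^0_-$ gives $\tr(\phi H^+\phi)_- \geq -\tr\bigl((H^+)^0_- \phi H^+ \phi\bigr)$. Using the IMS-type identity $\phi H^+ \phi = \tfrac12(\phi^2 H^+ + H^+ \phi^2) - \tfrac12 [\phi,[\phi,H^+]]$ and the spectral relation $(H^+)^0_- H^+ = H^+ (H^+)^0_- = -(H^+)_-$, the symmetric part of the right-hand side contributes exactly $\int \phi^2 (H^+)_-(x,x)\,dx$, leaving
\begin{equation*}
\tr(\phi H^+ \phi)_- \, \geq \, \int \phi^2(x)\, (H^+)_-(x,x)\,dx \, + \, \frac12\, \tr\bigl((H^+)^0_- [\phi,[\phi, H^+]]\bigr) \, .
\end{equation*}

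The main obstacle is to establish the double-commutator estimate
\begin{equation*}
\bigl|\tr\bigl((H^+)^0_-[\phi,[\phi, H^+]]\bigr)\bigr| \, \leq \, C_{\delta_2}\, h^{-d+1+\delta_2} \qquad \text{for every } 0 < \delta_2 < \min\{1,2s\} \, .
\end{equation*}
The trace equals $\iint (H^+)^0_-(x,y)(\phi(x)-\phi(y))^2 H^+(x,y)\, dx\,dy$, and we have the a priori bounds $|H^+(x,y)| \leq C h^{2s}|x-y|^{-d-2s}$, $|(H^+)^0_-(x,y)| \leq C h^{-d}$ (from Lemma~\ref{eest} plugged into \eqref{eq:projh+}), and $(\phi(x)-\phi(y))^2 \leq C \min(|x-y|^2, 1)$. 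A crude combination of these produces only $O(h^{-d+2s})$, which suffices only for $\delta_2 \leq 2s-1$ and thus misses the regime $s$ close to or below $1/2$. To cover the full range, one has to use the Fourier structure in \eqref{eq:projh+} to extract extra $|\xi'|$-decay for the projection kernel, integrating by parts to trade a power of $|x-y|^{-1}$ for a derivative of $\phi$, and then interpolate the resulting smoother kernel bound against the crude pointwise one. This interpolation step is the technical heart of the argument and is where the improved kernel estimates developed in this paper, beyond \cite{LieYau88,SoSoSp10}, are used.
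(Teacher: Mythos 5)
Your upper bound \eqref{eq:mod:main1} and the algebraic reduction of the lower bound to a double commutator are both correct and in fact match the paper's argument almost line for line: the paper's insertion of $\phi(x)\phi(y)=\tfrac12\bigl(\phi^2(x)+\phi^2(y)-|\phi(x)-\phi(y)|^2\bigr)$ into the trial-state trace is precisely your identity $\phi H^+\phi=\tfrac12(\phi^2 H^+ + H^+\phi^2)-\tfrac12[\phi,[\phi,H^+]]$, and the diagonal identity $(H^+)_-(x,x)=\Le\,h^{-d}-h^{-d}K(x_d/h)$ is established in the same way. Your observation that the crude pointwise bounds give only $O(h^{-d+2s})$, which fails for $\delta_2>2s-1$ and in particular for all $s\le 1/2$, is also the correct diagnosis.

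The genuine gap is that the remainder estimate is not proved, only gestured at, and the gesture does not match the paper's actual mechanism. What the paper does in Subsection~3.6 is to insert the identity $e^{i\xi'\cdot(x'-y')/h}=\tfrac{h^{2\sigma}}{|\xi'|^{2\sigma}}(-\Delta_{x'})^\sigma e^{i\xi'\cdot(x'-y')/h}$ into the explicit Fourier representation \eqref{eq:projh+} of $(H^+)^0_-(x,y)$ and integrate by parts, transferring the fractional derivative onto the factor $\tfrac{(\phi(x)-\phi(y))^2}{|x-y|^{d+2s}}$. This gains a factor $h^{2\sigma}$ and leaves $|\xi'|^{-2\sigma}$ in the transverse integral, which is harmless because $e^+(\cdot,\cdot,|\xi'|^{-2s})=0$ for $|\xi'|\ge1$ (since $A^+\ge1$) and $\sigma<1/2$. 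The price is the bound
$$
\iint \left|(-\Delta_{x'})^\sigma \frac{(\phi(x)-\phi(y))^2}{|x-y|^{d+2s}}\right| dx\,dy <\infty
\qquad\text{for }\ \tfrac12-s<\sigma<\min\{\tfrac12,1-s\}\,,
$$
which is Lemma~\ref{lem:remest} and is proved by a nontrivial real-variable argument splitting near and far diagonal and using H\"older with carefully matched exponents. Nothing in your sketch produces this; in particular there is no actual interpolation of a ``smoother'' kernel bound against a ``crude'' one, and ``trading $|x-y|^{-1}$ for a derivative of $\phi$'' is not what happens — a \emph{fractional} tangential derivative of the whole difference quotient is needed, of order strictly between $\tfrac12-s$ and $\min\{\tfrac12,1-s\}$, to hit the full range of $\delta_2$. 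Finally, the improvement over \cite{LieYau88,SoSoSp10} that you invoke concerns the localization error treated in Proposition~\ref{pro:loc}, not the model-operator remainder here; the nonstandard inputs in this step are Kwa\'snicki's diagonalization (via the a~priori bound of Lemma~\ref{eest} and the vanishing of $e^+$ for $|\xi'|\ge1$) together with Lemma~\ref{lem:remest}.
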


Assuming Proposition \ref{lem:modred}, we now give the short

\begin{proof}[Proof of Proposition \ref{lem:mod}]
To prove the proposition we may rescale $\phi$ and hence assume $l = 1$. Proposition
\ref{lem:mod} is then an immediate consequence of Proposition \ref{lem:modred}
provided we can show that for any $0<\delta_1<1$ there is a $C_{\delta_1}$ such that for all $h>0$
\begin{equation}
 \label{eq:modk}
\left| \int_{\R_+^{d}} \phi^2(x)   \frac 1h K\lk \frac{x_d}{h} \rk  dx  - \Lz
\int_{\R^{d-1}} \phi^2(x',0) dx' \right| \, \leq \, C_{\delta_1} h^{\delta_1} \,.
\end{equation}

In order to obtain the latter bound, we substitute $x_d = th$ and write,
recalling \eqref{eq:mod:lz},
$$
\int_{\R_+^{d}} \phi^2(x)   \frac 1h K\lk \frac{x_d}{h} \rk  dx - \Lz 
\int_{\R^{d-1}} \phi^2(x',0) dx' = \int_0^\infty K(t)  \int_{\R^{d-1}} 
\int_0^{th} \partial_\tau \phi^2(x',\tau)  d\tau dx' dt \, .
$$
By H\"older's inequality we can further estimate
\begin{align*}
\left| \int_{\R^{d-1}}  \int_0^{th} \partial_\tau \phi^2(x',\tau)  d\tau dx'
\right|
& \leq \left( \int_0^{th} d\tau \right)^{\delta_1}
\left( \int_0^\infty \left| \int_{\R^{d-1}}   \partial_\tau \phi^2(x',\tau) dx'
\right|^{(1-\delta_1)^{-1}} d\tau \right)^{1-\delta_1} \\
& \leq C t^{\delta_1} h^{\delta_1} \,.
\end{align*}
Since $\int_0^\infty t^{\delta_1} |K(t)|\,dt<\infty$ by Lemma \ref{aest}, we obtain inequality \eqref{eq:modk}.
\end{proof}

In the following two subsections we shall prove the lower and the upper bound
in Proposition \ref{lem:modred}, respectively.


\subsection{Lower bound on $-\tr \lk \phi H^+ \phi \rk_-$}
\label{ssec:bdlo}

To prove (\ref{eq:mod:main1}) we use the fact that
$$
-\tr \lk \phi H^+ \phi \rk_- \, \geq \, -\tr \lk \phi (H^+)_- \phi \rk \, .
$$
The lower bound follows from this by integrating the identity
\begin{equation}
 \label{eq:bdlocomp}
(H^+)_-(x,x) = h^{-d} \Le - h^{-d} K \lk \frac{x_d}h \rk \,,
\end{equation}
against $\phi^2$. Equation \eqref{eq:bdlocomp} is a consequence of \eqref{eq:densh+}. Indeed,
by the same argument as in Subsection \ref{ssec:hshl} we learn that
$$
(H_0)_-(x,x) \, = \, \frac{1}{(2\pi)^{d-1}} \frac{1}{h^d} \int_{\R^{d-1}}
|\xi'|^{1+2s} \,  a \lk |\xi'|^{-2s} \rk
d\xi' \, .
$$
On the other hand, by direct diagonalization as in Subsection
\ref{sec:bu:lo} we find that
$$
(H_0)_-(x,x) = h^{-d} \Le \,.
$$
Comparing these two identities with \eqref{eq:densh+} we arrive at
\eqref{eq:bdlocomp}, thus establishing \eqref{eq:mod:main1}.


\subsection{Upper bound on $-\tr \lk \phi H^+ \phi \rk_-$}
\label{ssec:bdup}

To prove  (\ref{eq:mod:main2}) we set $\gamma = (H^+)_-^0$. Its integral kernel is given by \eqref{eq:projh+} in
terms of the kernel $e^+(\cdot,\cdot,\mu)$ of $(A^+-\mu)_-^0$. By the variational principle it follows that 
\begin{align}
\nonumber
- \tr \lk \phi  H^+  \phi \rk_- \, \leq \, & \tr \lk \phi \gamma \phi H^+ \rk \\
\nonumber
\, = \, & \frac{1}{h^{2d}}  \int_{\R^d_+} 
\int_{\R^d_+} \int_{\R^{d-1}} \int_{\R^d} |\xi'| e^{i \xi'\cdot(x'-y') /h}
\, e^+ \lk x_d |\xi'| h^{-1} , y_d |\xi'| h^{-1} , |\xi'|^{-2s} \rk \\
\label{eq:bdup:basic}
& \times \lk |p|^{2s}-1 \rk e^{ip\cdot (y-x)/h} \, \phi(x) \, \phi(y) \,
\frac{dp \, d\xi' \, dx \, dy}{(2\pi)^{2d-1}} \, .
\end{align}
We insert the identity
$$
\phi(x) \, \phi(y) \, = \, \frac 12 \lk \phi^2(x) + \phi^2(y) - |\phi(x) - \phi(y)|^2 \rk \, ,
$$
and by a similar argument as in the proof of Lemma \ref{lem:bu:up} we can use the symmetry in $x$ and $y$ and substitute $q = p_d/|p'|$ to obtain
$$
- \tr \lk \phi H^+ \phi \rk \, \leq \, I_h[\phi] - R_h[\phi]
$$
with the main term
\begin{align*}
 I_h[\phi] = & \frac{1}{h^{2d}}  \int_{\R^d_+}  \int_{\R^d_+} \int_{\R^{d-1}}
\int_{\R^{d-1}}
\int_\R |\xi'| e^{i (\xi'-p')\cdot(x'-y') /h}  e^+ \lk x_d
|\xi'| h^{-1} , y_d |\xi'| h^{-1}, |\xi'|^{-2s} \rk \\ 
& \times  e^{i(y_d-x_d)|p'|q/h} \lk (q^2+1)^s-|p'|^{-2s} \rk |p'|^{1+2s}
\phi^2(x) \, \frac{dq \, dp' \, d\xi' \, dx \, dy}{(2\pi)^{2d-1}}
\end{align*}
and the remainder
\begin{align*}
R_h[\phi]  = & \frac{1}{h^{2d}}  \int_{\R^d_+}  \int_{\R^d_+} \int_{\R^{d-1}}
\int_{\R^d}  |\xi'| e^{i \xi'\cdot(x'-y') /h} \, e^+ \lk x_d
|\xi'| h^{-1} , y_d |\xi'| h^{-1}, |\xi'|^{-2s} \rk \\
& \times|p|^{2s}  e^{ip\cdot(y-x)/h} \, |\phi(x) - \phi(y)|^2 \, \frac{dp \,
d\xi' \, dx \, dy}{2(2\pi)^{2d-1}} \, .
\end{align*}
Since $\phi \in C_0^1(\R^d)$ we can perform the $y'$-integration in
$I_h[\phi]$. We use the fact that
$$
\int_\R \int_0^\infty e^+ \lk x_d,y_d,\mu \rk \lk (q^2+1)^s - \mu \rk e^{i(y_d-z_d)q} \,dy_d\,dq = -2 \pi a^+(x_d,z_d,\mu)
$$
and obtain
\begin{align*}
I_h[\phi]  =  & \frac{1}{h^{d+1}}
\int_{\R^d_+} \int_0^\infty \int_{\R^{d-1}} \int_\R |\xi'|^{2s+2}  \,
e^+ \lk x_d |\xi'| h^{-1},y_d |\xi'| h^{-1},|\xi'|^{-2s} \rk \\
& \times \lk (q^2+1)^s - |\xi'|^{-2s} \rk e^{i(y_d-x_d)|\xi'|q/h} \phi^2(x) \,
\frac{dq \, d\xi' \, dy_d \, dx}{(2\pi)^d} \\
=  & - \frac{1}{h^d} \int_{\R^d_+} \phi^2(x) \int_{\R^{d-1}} |\xi'|^{2s+1} \,
a^+ \lk x_d |\xi'| h^{-1}, |\xi'|^{-2s} \rk \frac{d\xi' \, dx}{(2\pi)^{d-1}} \,
.
\end{align*}
Using again \eqref{eq:bdlocomp} we find that
\begin{equation}
 \label{eq:ih}
I_h[\phi] = - \Le \int_{\R^d_+} \phi^2(x) \, dx \, h^{-d}  + \int_{\R^d_+}
\phi^2(x)  \, K\lk \frac{x_d}{h} \rk dx \, h^{-d} \,.
\end{equation}

It remains to study $R_h[\phi]$. We claim that for any $\frac 12 -s <
\sigma < \min\{\frac 12, 1-s\}$ there is a $C_\sigma$ such that
\begin{equation}
\label{result}
| R_h [\phi] | \, \leq \, C_\sigma h^{-d+2s+2\sigma}
\end{equation}
for all $h>0$. This, together with \eqref{eq:ih} will complete the proof of
\eqref{eq:mod:main2}.

In order to show \eqref{result} we perform the $p$ integration and find
that
\begin{align*}
R_h[\phi] \,  =  \, -\frac{C}{h^{d-2s}} \int_{\R^d_+} \int_{\R^d_+}
\int_{\R^{d-1}} & |\xi'| e^{i\xi'\cdot(x'-y')/h} e^+\lk
\frac{x_d|\xi'|}{h},\frac{y_d|\xi'|}{h},\frac{1}{|\xi'|^{2s}}\rk  \\
& \times \frac{|\phi(x) - \phi(y)|^2}{|x-y|^{d+2s}} d\xi' \, dx \, dy \, .
\end{align*}
We insert
$$
e^{i\xi' \cdot (x'-y')/h} \, = \, \frac{h^{2\sigma}}{|\xi'|^{2\sigma}} (
-\Delta_{x'})^\sigma e^{i\xi'\cdot (x'-y')/h} 
$$
and integrate by parts to get
\begin{align*}
R_h[\phi]  \, = \, -\frac{C}{h^{d-2s-2\sigma}} \int_{\R^d_+} \int_{\R^d_+}
\int_{\R^{d-1}} & |\xi'|^{1-2\sigma} e^{i\xi'\cdot(x'-y')/h} 
e^+\lk \frac{x_d|\xi'|}{h},\frac{y_d|\xi'|}{h},\frac{1}{|\xi'|^{2s}}\rk d\xi' 
\\
& \times  ( -\Delta_{x'})^\sigma  \frac{|\phi(x) - \phi(y)|^2}{|x-y|^{d+2s}} dx \,
dy \, .
\end{align*}
By Lemma \ref{eest} and the fact that $e^+(t,u,\mu)=0$ for $\mu\leq 1$ we
arrive at
\begin{align*}
 |R_h[\phi]| \leq & \frac{C}{h^{d-2s-2\sigma}} \int_{\R^d_+} \int_{\R^d_+}
\int_{\{\xi'\in\R^{d-1}:|\xi'|<1\}} |\xi'|^{-2\sigma} d\xi' \left| (
-\Delta_{x'})^\sigma \frac{|\phi(x) - \phi(y)|^2}{|x-y|^{d+2s}} \right| dx 
dy \\
\leq & \frac{C}{h^{d-2s-2\sigma}} \int_{\R^d_+} \int_{\R^d_+}
\left| ( -\Delta{_x'})^\sigma  \frac{|\phi(x) - \phi(y)|^2}{|x-y|^{d+2s}} \right|
dx dy \, .
\end{align*}
According to Lemma \ref{lem:remest} this implies \eqref{result} and hence
completes the proof of \eqref{eq:mod:main2}.


\section{Local asymptotics near the boundary}
\label{sec:boundary}

In this section we prove Proposition \ref{pro:boundary}. After having analyzed
the half-space case in the previous section, we now show how the case of a
general domain follows. We shall transform the operator $H_\Omega$ locally to an
operator given on the half-space $\R^d_+ = \{(y',y_d)\in \R^{d-1}\times \R \, :
\, y_d > 0 \}$ and we shall quantify the error made by this straightening of the
boundary.

Under the conditions of Proposition \ref{pro:boundary}, let $B$ denote the open
ball of radius $l > 0$, containing the support of $\phi$.
For $x_0 \in B \cap \partial \Omega$ let $\nu_{x_0}$ be the inner normal unit
vector at $x_0$. We choose a Cartesian coordinate system such that $x_0 = 0$ and
$\nu_{x_0} = (0, \dots, 0, 1)$, and we write $x = (x',x_d) \in \R^{d-1} \times
\R$ for $x \in \R^d$.

For sufficiently small $l > 0$ one can introduce new local coordinates near the
boundary. Let $D$ denote the projection of $B$ on the hyperplane given by $x_d
=0$.
Since the boundary of $\Omega$ is compact and $C^{1,\alpha}$ there is a
constant $c > 0$ such that for $0 < l \leq c$ we can find a real function $f
\in C^{1,\alpha}$ given on $D$, satisfying
$$
\partial \Omega \cap B \, = \, \left\{ (x',x_d) \, : \, x' \in D , x_d = f(x')
\right\} \cap B \, .
$$
The choice of coordinates implies $f(0) = 0$ and $ \nabla f (0) = 0$.
Hence, we can estimate
$$
\sup_{x'\in D} |\nabla f(x')| \, = \, \sup_{x'\in D} |\nabla f(x') - \nabla
f(0)| \, \leq \, C_f \, |x'|^\alpha \, \leq \, C_f \, l^\alpha \, .
$$
Since the boundary of $\Omega$ is compact we can choose a constant $C > 0$,
depending only on $\Omega$, in particular independent of $f$, such that the
bound
\begin{equation}
\label{eq:red_fest}
\sup_{x'\in D} |\nabla f(x')| \, \leq \, C   l^\alpha 
\end{equation}
holds.

We introduce new local coordinates via the diffeomorphism $\varphi \, :
\, D \times \R \to \R^d$, given by
$$
y_j \, = \, \varphi_j(x) \, = \, x_j \quad \textnormal{for} \quad j = 1, \dots,
d-1
$$
and
$$
y_d \, = \, \varphi_d(x) \, = \, x_d - f(x') \, .
$$
Note that the determinant of the Jacobian matrix of $\varphi$ equals $1$ and
that the inverse of $\varphi$ is given on $\textnormal{ran} \, \varphi = D
\times \R$. In particular, we get
$$
\varphi \lk \partial \Omega \cap B \rk \, \subset \, \partial \R^d_+ \, = \, \{
y \in \R^d \, : \, y_d = 0 \} \, .
$$ 

Fix $v \in \mathcal{H}^s(\Omega)$ with support in $\overline{B}$. For $y \in
\textnormal{ran} \, \varphi$ put $\tilde v (y) = v \circ \varphi^{-1}(y)$ and
extend $\tilde v$ by zero to $\R^d$.

\begin{lemma}
\label{lem:str1}
The function $\tilde v$ belongs to $\mathcal{H}^s(\R^d_+)$ and there exist positive constants $c$ and $C$ depneding only on $\Omega$ such that for $0 < l \leq c$ we have
$$
\left| (  \tilde v , (-\Delta)_{\R^d_+}^s \tilde v ) - ( v, (-\Delta)^s_\Omega 
v ) \right| \, \leq  C \, l^\alpha \, \min \left\{ (  \tilde v ,
(-\Delta)_{\R^d_+}^s \tilde v ) , \  ( v, (-\Delta)^s_\Omega  v ) \right\} \, .
$$
\end{lemma}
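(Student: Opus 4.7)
My plan is to reduce the lemma to a change-of-variables computation in the Gagliardo double integral after extending the local straightening map $\varphi$ to a global diffeomorphism of $\R^d$. First I would extend $f$ to a $C^{1,\alpha}$ function $\tilde f$ defined on all of $\R^{d-1}$, with $\tilde f = f$ on a neighborhood of the projection of $\overline B$ and $\|\nabla\tilde f\|_\infty \leq C l^\alpha$. Because $f(0)=0$ and $\nabla f(0)=0$, this is achieved simply by multiplying $f$ by a smooth cutoff equal to $1$ on the projection of $\overline B$ and using $|f(x')| \leq C l^{1+\alpha}$ on the support of the cutoff to control the derivative of the product. The map $\Phi(x) := (x', x_d - \tilde f(x'))$ is then a global diffeomorphism of $\R^d$ whose Jacobian is identically $1$ and which coincides with $\varphi$ on $\overline B$.

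Next, I would verify that $\tilde v := v \circ \Phi^{-1}$ (extended by zero) belongs to $\mathcal{H}^s(\R^d_+)$. The support condition $\tilde v \equiv 0$ on $\R^d \setminus \overline{\R^d_+}$ follows because a preimage $\Phi^{-1}(y) = (y', y_d + \tilde f(y'))$ of a point with $y_d < 0$ either lies outside $\overline B$ (so $v = 0$ by the support assumption on $v$) or lies in $\overline B$ strictly below the local graph of $f$ and hence outside $\overline\Omega$; in either case $v(\Phi^{-1}(y)) = 0$. The required $H^s$ regularity of $\tilde v$ will come as a byproduct of the quadratic-form comparison below.

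The core of the proof is the change of variables
\begin{equation*}
(v, (-\Delta)^s_\Omega v) \, = \, C_{s,d} \iint_{\R^d \times \R^d} \frac{|v(x) - v(y)|^2}{|x-y|^{d+2s}} \, dx \, dy \, = \, C_{s,d} \iint_{\R^d \times \R^d} \frac{|\tilde v(\xi) - \tilde v(\eta)|^2}{|\Phi^{-1}(\xi) - \Phi^{-1}(\eta)|^{d+2s}} \, d\xi \, d\eta \, ,
\end{equation*}
which is legitimate since $\Phi$ is a global, volume-preserving diffeomorphism. Writing out $\Phi^{-1}(y) = (y', y_d + \tilde f(y'))$ and using $|\tilde f(\xi') - \tilde f(\eta')| \leq C l^\alpha |\xi' - \eta'|$ together with $2|a||b| \leq a^2 + b^2$, one gets
\begin{equation*}
\bigl| \, |\Phi^{-1}(\xi) - \Phi^{-1}(\eta)|^2 - |\xi - \eta|^2 \, \bigr| \, \leq \, C l^\alpha |\xi - \eta|^2 \, ,
\end{equation*}
so for $l$ small enough the ratio of $(d+2s)$-th powers lies in $[1 - C l^\alpha, 1 + C l^\alpha]$. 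Substituting into the transformed integral gives the two-sided comparison
\begin{equation*}
(1 - C l^\alpha)(\tilde v, (-\Delta)^s_{\R^d_+} \tilde v) \, \leq \, (v, (-\Delta)^s_\Omega v) \, \leq \, (1 + C l^\alpha)(\tilde v, (-\Delta)^s_{\R^d_+} \tilde v) \, ,
\end{equation*}
from which the desired bound by $C l^\alpha \min\{\cdot, \cdot\}$ follows immediately, the inequality with the $v$-form on the right being obtained by absorbing the harmless factor $(1 \mp C l^\alpha)^{-1}$.

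The only genuinely delicate ingredient is the construction of the extension $\tilde f$ with the correct Lipschitz bound, together with the verification of the support property for $\tilde v$; once these are in place, the rest is a one-line Taylor-type comparison of the two distance functions, and I foresee no substantial further obstacle beyond bookkeeping.
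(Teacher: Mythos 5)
Your proof is correct and follows essentially the same route as the paper's: a change of variables in the Gagliardo double integral, followed by a pointwise comparison of the kernels $|x-w|^{-d-2s}$ and $|y-z|^{-d-2s}$ via the bound $|\,|\Phi^{-1}(\xi)-\Phi^{-1}(\eta)|^2 - |\xi-\eta|^2\,| \leq C l^\alpha |\xi-\eta|^2$. The only genuine extra step you add is the explicit global extension of $f$ with $\|\nabla\tilde f\|_\infty \leq C l^\alpha$, which tidies up a point the paper treats only implicitly (the straightening map $\varphi$ is defined on $D\times\R$, yet the double integral ranges over all of $\R^d\times\R^d$); this is a reasonable refinement but not a different method.
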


\begin{proof}
By definition, $\tilde v$ belongs to $H^s(\R^d)$ and for $y \in \R^d \setminus
\overline{\R^d_+}$ we find
$x_d = y_d + f(y') < f(x')$, thus $\tilde v (y) = v(x) = 0$. Therefore $\tilde
v$ belongs to $\mathcal{H}^s(\R^d_+)$.

Using the new local coordinates we get
\begin{equation}
\label{eq:co_formhalf}
(  v , (-\Delta)_\Omega^s v )  =  C_{s,d} \int_{\R^d} \int_{\R^d} \frac{|v(x) -
v(w)|^{2}}{|x-w|^{d+2s}}  dx \, dw  =  C_{s,d} \int_{\R^d} \int_{\R^d}
\frac{|\tilde v(y) -  \tilde v(z)|^{2}}{|x-w|^{d+2s}}  dy \, dz \, ,
\end{equation}
where $y = \varphi(x)$ and $z = \varphi(w)$, thus $x=(y', y_d + f(y'))$ and $w =
(z', z_d + f(z'))$. Let us write
\begin{align*}
& \left|\frac{1}{|y-z|^{d+2s}}  -  \frac{1}{|x-w|^{d+2s}} \right| \\
& = \, \frac{1}{|y-z|^{d+2s}} \, \left| 1 - \frac{|y-z|^{d+2s}}{\left[ |y'-z'|^2
+ (y_d+f(y')-z_d-f(z'))^2 \right]^{d/2+s}} \right| \, .
\end{align*}
After multiplying out, the last fraction equals
$$
\lk 1 + \frac{(f(y')-f(z'))^2 + 2 (y_d-z_d)(f(y')-f(z'))}{|y-z|^2}
\rk^{-(d/2+s)} 
$$
and we can employ (\ref{eq:red_fest}) to estimate
\begin{align*}
&\left| \frac{(f(y')-f(z'))^2 + 2 (y_d-z_d)(f(y')-f(z'))}{|y-z|^2} \right| \\
& \leq \, \sup |\nabla f|^2 \, \frac{|y'-z'|^2}{|y-z|^2} + 2 \, \sup |\nabla f|
\, \frac{|y'-z'| \, |y_d-z_d|}{|y-z|^2} \ \leq \ C  l^{\alpha} \, .
\end{align*}
Choosing $l$ small enough we can assume $C l^{\alpha} < 1/2$. Then, combining
the foregoing relations, we find
\begin{equation}
\label{eq:co_absdif}
\left| \frac{1}{|x-w|^{d+2s}} - \frac{1}{|y-z|^{d+2s}} \right| \, \leq \,  C 
\frac{l^{\alpha}}{|y-z|^{d+2s}} \, .
\end{equation}
{F}rom (\ref{eq:co_formhalf}) and (\ref{eq:co_absdif}) we conclude
\begin{eqnarray*}
\lefteqn{ \left| ( \tilde v , (-\Delta)_{\R^d_+}^s \tilde v ) - ( v,
(-\Delta)^s_\Omega  v ) \right| } \\
& \leq & C_{s,d} \iint | \tilde v(y)- \tilde v(z)|^2 \left|
\frac{1}{|y-z|^{d+2s}} - \frac{1}{|x-w|^{d+2s}} \right| \, dy \, dz \\
& \leq & C \, l^{\alpha} (  \tilde v, (-\Delta)^s_{\R^d_+} \tilde v ) \, . 
\end{eqnarray*}
This proves the first claim of the Lemma. The second claim follows by
interchanging the roles of $(-\Delta)^s_{\R^d_+}$ and $(-\Delta)^s_\Omega$.
\end{proof}

On the range of $\varphi$ we define $\tilde \phi = \phi \circ \varphi^{-1}$
and extend it by zero to $\R^d$ such that $\tilde \phi \in C_0^1(\R^d)$ and
$\| \nabla \tilde \phi \|_\infty \leq Cl^{-1}$ hold. Using Lemma
\ref{lem:str1} we show the following relations.

\begin{lemma}
\label{lem:str}
For $0 < l \leq c$ and any $h > 0$ the estimate
\begin{equation}
\label{eq:str:lem1}
\left| \tr ( \phi H_\Omega \phi )_- - \tr ( \tilde \phi H^+ \tilde \phi )_-
\right| \, \leq \, C \, l^{d+\alpha} \, h^{-d}
\end{equation}
holds. Moreover, we have 
\begin{equation}
\label{eq:str:lem2}
\int_\Omega \phi^2(x) \, dx  \, = \, \int_{\R^d_+} \tilde \phi^2(y) \, dy
\end{equation}
and 
\begin{equation}
\label{eq:str:lem3}
0 \leq \int_{\partial \Omega} \phi^2(x) \, d\sigma(x) - \int_{\R^{d-1}}
\tilde \phi^2(y',0) \, dy' \, \leq \, C \, l^{d-1+2\alpha} \, .
\end{equation}
\end{lemma}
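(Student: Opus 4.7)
The plan is to address the three assertions of the lemma separately; only \eqref{eq:str:lem1} requires substantial work. Identity \eqref{eq:str:lem2} follows immediately from the change of variables $y=\varphi(x)$: because $\det D\varphi\equiv 1$ and $\varphi$ is a bijection $\Omega\cap B\to\varphi(B)\cap\R^d_+$, and because $\phi$ and $\tilde\phi$ vanish outside these sets, one gets $\int_\Omega\phi^2\,dx=\int_{\R^d_+}\tilde\phi^2\,dy$. For \eqref{eq:str:lem3} I parametrize $\partial\Omega\cap B$ as the graph $\{(x',f(x'))\colon x'\in D\}$, so $d\sigma(x)=\sqrt{1+|\nabla f(x')|^2}\,dx'$ and, since $\varphi(x',f(x'))=(x',0)$, also $\tilde\phi(x',0)=\phi(x',f(x'))$. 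The lower bound is $\sqrt{1+|\nabla f|^2}\geq 1$; the upper bound combines $\sqrt{1+|\nabla f|^2}-1\leq\tfrac12|\nabla f|^2\leq Cl^{2\alpha}$ from \eqref{eq:red_fest} with the measure estimate $|D\cap\mathrm{supp}\,\tilde\phi(\cdot,0)|\leq Cl^{d-1}$.

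For \eqref{eq:str:lem1} I introduce $U\colon u\mapsto u\circ\varphi^{-1}$, which by the Jacobian-one property is an isometry from $\{u\in L^2(\Omega)\colon\mathrm{supp}\,u\subset B\}$ to $\{v\in L^2(\R^d_+)\colon\mathrm{supp}\,v\subset\varphi(B)\cap\R^d_+\}$, satisfying $U\phi U^{*}=\tilde\phi$. The first assertion of Lemma~\ref{lem:str1} guarantees that $U$ maps the relevant sector of the form domain of $\phi H_\Omega\phi$ into that of $\tilde\phi H^+\tilde\phi$. Take $\gamma=(\phi H_\Omega\phi)_-^0$, so that $-\tr(\phi H_\Omega\phi)_-=\tr(\gamma\,\phi H_\Omega\phi)$; the identity $v_n=\lambda_n^{-1}\phi H_\Omega\phi v_n$ for any negative eigenvector shows that the range of $\gamma$ lies in $L^2(B\cap\Omega)$, so $\tilde\gamma:=U\gamma U^{*}$ is a valid trial density matrix on $L^2(\R^d_+)$. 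The variational principle then yields
\[
-\tr(\tilde\phi H^+\tilde\phi)_- \,\leq\, \tr\!\bigl(\tilde\gamma\,\tilde\phi H^+\tilde\phi\bigr) \,=\, -\tr(\phi H_\Omega\phi)_- + \sum_n E_n,
\]
where $E_n=(\tilde v_n,\tilde\phi H^+\tilde\phi\tilde v_n)-(v_n,\phi H_\Omega\phi v_n)$ and $\{v_n\}$ is an orthonormal basis of $\mathrm{range}(\gamma)$, $\tilde v_n=Uv_n$.

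Applying Lemma~\ref{lem:str1} with $v=\phi v_n$ and using $\|\phi v_n\|=\|\tilde\phi\tilde v_n\|$ gives $|E_n|\leq Cl^\alpha\,h^{2s}(\phi v_n,(-\Delta)^s_\Omega\phi v_n)$, and the eigenvalue inequality $(v_n,\phi H_\Omega\phi v_n)<0$ forces $h^{2s}(\phi v_n,(-\Delta)^s_\Omega\phi v_n)<\|\phi v_n\|^2\leq\|\phi\|_\infty^2$. Hence $\sum_n|E_n|\leq C\,l^\alpha\,\|\phi\|_\infty^2\,N$, where $N$ is the number of negative eigenvalues of $\phi H_\Omega\phi$; the reverse inequality follows by the symmetric construction with $(\tilde\phi H^+\tilde\phi)_-^0$ pulled back through $U^{*}$. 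The main technical point that remains is the Weyl-type bound $N\leq Cl^d h^{-d}$, which I would obtain by identifying $N$ with the number of eigenvalues below $1$ of $(-h^2\Delta)^s$ with Dirichlet conditions outside the ball $B$ containing $\mathrm{supp}\,\phi$, and then invoking the Berezin--Lieb inequality (equivalently, the fractional Weyl law of Blumenthal--Getoor) on that ball. Combining everything yields $|\tr(\phi H_\Omega\phi)_--\tr(\tilde\phi H^+\tilde\phi)_-|\leq Cl^{d+\alpha}h^{-d}$, which is \eqref{eq:str:lem1}.
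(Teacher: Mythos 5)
Your handling of \eqref{eq:str:lem2} and \eqref{eq:str:lem3} matches the paper, and your treatment of \eqref{eq:str:lem1} shares its essential mechanism with the paper's proof --- pull back a trial density matrix through the coordinate change and invoke the variational principle together with Lemma~\ref{lem:str1} --- but the way you control the resulting error is genuinely different. The paper chooses $\gamma$ freely (any trial density matrix supported in $\overline B\times\overline B$), obtains
$\tr(\phi H_\Omega\phi)_-\leq \tr\bigl(\tilde\phi\,((1-Cl^\alpha)h^{2s}(-\Delta)^s_{\R^d_+}-1)\,\tilde\phi\bigr)_-$,
and then splits $(1-Cl^\alpha)h^{2s}(-\Delta)^s-1 = H^+ + \varepsilon\bigl((h^{2s}/2)(-\Delta)^s-1\bigr)$ with $\varepsilon=2Cl^\alpha$, so that the subadditivity of $X\mapsto\tr X_-$ plus the Berezin--Lieb bound (Lemma~\ref{lem:berezin}) on the second summand give the $Cl^{d+\alpha}h^{-d}$ error in one stroke. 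You instead fix $\gamma=(\phi H_\Omega\phi)_-^0$, write the error as a sum of per-eigenvector perturbations $E_n$, bound each by $Cl^\alpha$ using Lemma~\ref{lem:str1} and the negativity of the eigenvalue, and then need a bound on the \emph{number} $N$ of negative eigenvalues. That is an extra ingredient not present in the paper, and the justification you offer for it is imprecise: the Blumenthal--Getoor theorem is an asymptotic Weyl law, not an a priori bound, and the Berezin--Lieb inequality controls $\tr(\,\cdot\,)_-$ (a sum of eigenvalues), not the counting function directly. The bound $N\leq Cl^d h^{-d}$ is nevertheless true and can be justified either by a CLR-type estimate for $(-\Delta)^s$, or by the ``shift'' trick: $N$ is dominated by the number of eigenvalues of $(-h^2\Delta)^s-2$ on $B$ below $-1$, hence by $\tr\bigl((-h^2\Delta)^s-2\bigr)_-\leq C l^d h^{-d}$ via Berezin--Lieb. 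With that patch your argument closes, but it is worth noting that the paper's route avoids any eigenvalue counting and needs only the trace bound it already has; your route is a bit longer but gives the cleaner picture that each perturbed eigenvalue moves by at most $O(l^\alpha)$.
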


\begin{proof}
The definition of $\tilde \phi$ and the fact that the Jacobian of $\phi$ equals $1$ immediately gives (\ref{eq:str:lem2}). Using (\ref{eq:red_fest}) we estimate
$$
\int_{\partial\Omega} \phi^2(x) \, d\sigma(x) \, = \, \int_{\R^{d-1}} \tilde
\phi^2(y',0) \sqrt{1+|\nabla f|^2 } \, dy' \, \leq \, \int_{\R^{d-1}} \tilde
\phi^2(y',0) \, dy' +C l^{d-1+2\alpha} \, .
$$
from which (\ref{eq:str:lem3}) follows.

To prove (\ref{eq:str:lem1}) we refer to the variational principle once more and note that
$$
- \tr \lk \phi H_\Omega \phi \rk_- \, = \, \inf_{0 \leq \gamma \leq 1} \tr \lk
\phi \gamma \phi H_\Omega \rk \, ,
$$
where we can assume that infimum is taken over trial density matrices $\gamma$ supported in $\overline{B} \times
\overline{B}$. Fix such a $\gamma$. 
For $y$ and $z$ from  $D \times \R$ set
$$
\tilde \gamma (y,z) \, = \, \gamma \lk \varphi^{-1}(y), \varphi^{-1}(z) \rk \, ,
$$
so that $0 \leq \tilde \gamma \leq  1$ and the range of $\tilde \gamma$ belongs
to the form domain of $\tilde \phi H^+ \tilde \phi$.
According to Lemma \ref{lem:str1} it follows that
\begin{eqnarray*}
\tr \lk \phi \gamma \phi H_\Omega \rk & \geq & \tr \lk \tilde \phi \tilde \gamma
\tilde \phi \lk h^{2s} (1-Cl^\alpha )  (-\Delta)^s_{\R^d_+} - 1 \rk \rk \\
& \geq & - \tr \lk \tilde \phi \lk (1-Cl^\alpha ) h^{2s} (-\Delta)^s_{\R^d_+} -
1 \rk \tilde \phi \rk_- 
\end{eqnarray*}
and consequently
$$
\tr \lk \phi H_\Omega \phi \rk_- \, \leq \, \tr \lk \tilde \phi  \lk
(1-Cl^\alpha ) h^{2s} (-\Delta)^s_{\R^d_+} - 1 \rk \tilde \phi \rk_- \, .
$$

Set $\varepsilon = 2C l^\alpha$ and assume $l$ to be sufficiently small, so that
$0 < \varepsilon \leq 1/2$. Then
\begin{align*}
\tr \lk \phi H_\Omega \phi \rk_- \, &\leq \,  \tr \lk \tilde \phi \lk
(1-Cl^\alpha) h^{2s} (-\Delta)^s_{\R^d_+} -1 \rk \tilde \phi \rk_-  \\
& \leq \, \tr \lk \tilde \phi \lk  (-h^2 \Delta)^s_{\R^d_+} - 1 \rk \tilde \phi
\rk_- + \tr \lk \tilde \phi \lk (\varepsilon - Cl^\alpha ) h^{2s}
(-\Delta)^s_{\R^d_+} - \varepsilon \rk \tilde \phi \rk_- \\
& \leq \, \tr ( \tilde \phi H^+ \tilde \phi )_- + \varepsilon \tr \lk \tilde
\phi \lk (h^{2s}/2) (-\Delta)^s_{\R^d_+} - 1 \rk \tilde \phi \rk_- \, .
\end{align*}
Using Lemma \ref{lem:berezin} we estimate $\tr (\tilde \phi ((h^{2s}/2)
(-\Delta)^s_{\R^d_+} - 1 ) \tilde \phi )_- \leq C l^d h^{-d}$
and  it follows that
$$
\tr ( \phi H_\Omega \phi )_- \, \leq \, \tr ( \tilde \phi H^+ \tilde \phi )_- +
C \, l^{d+\alpha} \, h^{-d} \, .
$$
Finally, by interchanging the roles of $H_\Omega$ and $H^+$, we get an analogous
lower bound and the proof of the Lemma is complete.
\end{proof}

We conclude this section by giving the short

\begin{proof}[Proof of Proposition \ref{pro:boundary}]
 It suffices to combine Lemma \ref{lem:str} and Proposition \ref{lem:mod}.
\end{proof}


\section{Localization}
\label{sec:loc} 

In this section we construct the family of localization functions $(\phi_u)_{u \in \R^d}$ and prove Proposition \ref{pro:loc}. 
Fix a real-valued function $\phi \in C_0^\infty(\R^d)$ with support in the ball $\{x \in \R^d \, : \, |x| < 1\}$ that satisfies $\| \phi \|_2 = 1$. We recall the definition of the local length scale $l(u)$ from \eqref{eq:l}.
For $u, x \in \R^d$ let $J(x,u)$ be the Jacobian of the map $u \mapsto (x-u)/l(u)$. We define 
$$
\phi_u(x) \, = \, \phi \lk \frac{x-u}{l(u)} \rk \sqrt{J(x,u)} \, l(u)^{d/2} \, ,
$$
such that $\phi_u$ is supported in the ball $B_u = \{ x \in \R^d \, : \, |x-u| < l(u) \}$.

By definition, the function $l(u)$ is smooth and satisfies $0 < l(u) \leq 1/2$ and $\left\| \nabla l \right\|_\infty \leq 1/2$.  Therefore, according to \cite{SolSpi03}, the functions $\phi_u$ satisfy (\ref{eq:int:grad}) and (\ref{eq:int:unity}) for all $u \in \R^d$. 

To prove the lower bound in Proposition \ref{pro:loc} we follow some ideas from \cite{LieYau88}. In particular, we need the following auxiliary results; the first one gives an IMS-type localization formula for the fractional Laplacian.

\begin{lemma}
\label{lem:loc_ims}
For the family of functions $\lk \phi_u \rk_{u \in \R^d}$ introduced above and for all $f \in \mathcal{H}^s(\Omega)$ the identity
$$
\lk f, (-\Delta)^s f \rk \, = \, \int_{\Omega^*} \lk \phi_u f , (-\Delta)^s \phi_u f \rk l(u)^{-d} \, du - \lk f, L f \rk 
$$
holds with $\Omega^* = \{ u \in \R^d \, : \, \textnormal{supp} \phi_u \cap \Omega \neq \emptyset \}$.
The operator $L$ is of the form 
\begin{equation}
\label{eq:loc_kernel}
L \, = \, \int_{\Omega^*} L_{\phi_u} \, l(u)^{-d} \, du \, ,
\end{equation}
where $L_{\phi_u}$ is a bounded operator with integral kernel
$$
L_{\phi_u}(x,y) \, = \, C_{s,d} \, \frac{|\phi_u(x) - \phi_u(y)|^2}{|x-y|^{d+2s}} \chi_\Omega(x)\chi_\Omega(y) \, .
$$
\end{lemma}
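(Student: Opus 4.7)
The plan is to start from the double-integral representation of the quadratic form on $\mathcal{H}^s(\R^d)$, integrate against $l(u)^{-d}\,du$, and exploit the partition-of-unity property \eqref{eq:int:unity} together with a short algebraic identity. Throughout, I assume that $f\in\mathcal{H}^s(\Omega)$ is extended by zero to all of $\R^d$ and, thanks to \eqref{eq:int:grad}, that $\phi_u f\in\mathcal{H}^s(\R^d)$ for every $u$; in particular $\phi_u f\equiv 0$ whenever $u\notin\Omega^*$, which is why the $u$-integration can be restricted to $\Omega^*$.

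First I would write
$$
\bigl(\phi_u f,(-\Delta)^s\phi_u f\bigr)
= C_{s,d}\iint_{\R^d\times\R^d}
\frac{|\phi_u(x)f(x)-\phi_u(y)f(y)|^2}{|x-y|^{d+2s}}\,dx\,dy\,,
$$
multiply by $l(u)^{-d}$ and integrate in $u$. All integrands are non-negative, so Fubini lets me move the $u$-integration inside the $(x,y)$-integral and study the quantity
$$
N(x,y)\,:=\,\int_{\R^d}\bigl|\phi_u(x)f(x)-\phi_u(y)f(y)\bigr|^2\,l(u)^{-d}\,du\,.
$$
Expanding the square and using \eqref{eq:int:unity} in the form $\int\phi_u(x)^2 l(u)^{-d}\,du=\int\phi_u(y)^2 l(u)^{-d}\,du=1$ together with the elementary identity
$$
\int 2\phi_u(x)\phi_u(y)\,l(u)^{-d}\,du
= 2-\int|\phi_u(x)-\phi_u(y)|^2\,l(u)^{-d}\,du
$$
(obtained by completing the square), I obtain the pointwise identity
$$
N(x,y)\,=\,|f(x)-f(y)|^2\,+\,f(x)f(y)\int_{\R^d}|\phi_u(x)-\phi_u(y)|^2\,l(u)^{-d}\,du\,.
$$

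Substituting this back and dividing by $|x-y|^{d+2s}$, the first piece yields exactly $C_{s,d}\iint |f(x)-f(y)|^2|x-y|^{-d-2s}\,dx\,dy=(f,(-\Delta)^s f)$. The second piece, after inserting $\chi_\Omega(x)\chi_\Omega(y)$ (which is harmless because $f$ is supported in $\overline\Omega$) and interchanging the order of integration back (again by non-negativity), is exactly $(f,Lf)$ with $L$ and $L_{\phi_u}$ as defined in \eqref{eq:loc_kernel}. Rearranging gives the claimed IMS-type formula, and the restriction of the outer integration to $\Omega^*$ is automatic since $\phi_u f\equiv 0$ otherwise.

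No serious obstacle appears; the only care needed is the Fubini step (justified by positivity, with finiteness guaranteed by $f\in\mathcal{H}^s(\Omega)$ and by the uniform bounds \eqref{eq:int:grad}) and checking that the decomposition genuinely reconstructs $L$ as described, rather than some symmetrized variant. The real content of Lemma \ref{lem:loc_ims} is not this algebraic identity but the subsequent estimate of the localization error $(f,Lf)$, which is where the multiscale choice \eqref{eq:l} of $l(u)$ and the improvement over \cite{LieYau88,SoSoSp10} will enter.
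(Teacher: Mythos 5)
Your argument is correct and is the standard way to derive IMS-type identities for non-local quadratic forms. The paper itself does not provide an explicit proof of Lemma~\ref{lem:loc_ims} (it is an adaptation of the localization formula appearing in \cite{LieYau88}), so there is no paper proof to compare against, but the algebraic mechanism you use --- write the form as a double integral, insert the partition of unity \eqref{eq:int:unity}, expand the square pointwise, and complete the square to identify the cross term as the error kernel $L_{\phi_u}$ --- is precisely the intended one.

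Two small points you should make explicit rather than leave implicit. First, the pointwise identity
$$
N(x,y)\,=\,|f(x)-f(y)|^2\,+\,f(x)f(y)\int_{\R^d}|\phi_u(x)-\phi_u(y)|^2\,l(u)^{-d}\,du
$$
uses $\int\phi_u(x)^2\,l(u)^{-d}\,du=1$ only where $f(x)\neq0$, i.e.\ for a.e.\ $x\in\overline\Omega$; this is consistent with \eqref{eq:int:unity}, but worth noting so the reader does not worry about the $\R^d$ versus $\Omega^*$ discrepancy. Second, you should observe (and you essentially do) that for $u\notin\Omega^*$ one has $\phi_u\equiv0$ on $\Omega$, hence both $\phi_u f\equiv0$ a.e.\ and $|\phi_u(x)-\phi_u(y)|^2\chi_\Omega(x)\chi_\Omega(y)\equiv0$; this is what allows one to pass freely between $\int_{\R^d}$ and $\int_{\Omega^*}$ for \emph{both} the leading and the error term, which is needed to land exactly on the stated form of $L$ in \eqref{eq:loc_kernel}. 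With those clarifications the proof is complete; the boundedness of $L_{\phi_u}$, which the lemma also asserts, follows from the gradient bound \eqref{eq:int:grad} and $0<s<1$ via a Schur-test estimate on the kernel, and is in any case superseded by the quantitative Lemma~\ref{lem:loc_rem}.
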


Here $\chi_\Omega$ denotes the characteristic function of $\Omega$.

Lemma \ref{lem:loc_ims} implies that for any operator $\gamma$ with range in $\mathcal{H}^s(\Omega)$
\begin{equation}
\label{eq:loc_trace}
\textnormal{Tr} \, \gamma (-\Delta)^s \, = \, \int_{\R^d} \tr \lk \gamma \phi_u (-\Delta)^s \phi_u \rk l(u)^{-d } \, du - \tr \, \gamma L \, .
\end{equation}
The next result allows to estimate the localization error $\tr\,\gamma L$.

\begin{lemma}
\label{lem:loc_rem}
For $u \in \R^d$ and $0 < \delta \leq 1/2$ we have
$$
\tr  \, \gamma L_{\phi_u}  \, \leq \, \tr \, \gamma \lk C \delta^{2-2s}
l(u)^{-2s}  \chi_\delta \chi_\Omega \rk + C \, \| \gamma \| \, l(u)^{-2s} \,
\delta^{-d+2-2s}   r(\delta)
$$
with
$$
r(\delta) \, = \, \left\{ \begin{array}{ll} 1 & \textnormal{if} \  1-d/4 < s
<1 \\
|\ln \delta| & \textnormal{if} \ 0< s = 1-d/4  \\
\delta^{d+4s-4} & \textnormal{if} \  0< s <1-d/4
\end{array} \right. \, .
$$
where $\chi_\delta$ denotes the characteristic function of $\{ x \in \R^d
\, : \, |x-u| < l(u) (1+\delta) \}$.
\end{lemma}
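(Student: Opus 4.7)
The strategy is to split the kernel of $L_{\phi_u}$ according to $|x-y|$: write
\[
L_{\phi_u} = L_1 + L_2
\]
where $L_1$ carries the additional cutoff $\chi_{|x-y|<\delta l(u)}$ and $L_2$ the complementary cutoff $\chi_{|x-y|\geq\delta l(u)}$. These two pieces will furnish the two terms on the right-hand side of the lemma.

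For the near-diagonal piece $L_1$ I would invoke the Lipschitz estimate $|\phi_u(x)-\phi_u(y)|\leq C l(u)^{-1}|x-y|$ from \eqref{eq:int:grad} to control the kernel pointwise by $C\, l(u)^{-2}|x-y|^{2-d-2s}\chi_{|x-y|<\delta l(u)}\chi_\Omega(x)\chi_\Omega(y)$. Applying the standard symmetrisation $2|f(x)f(y)|\leq f(x)^2+f(y)^2$ to the resulting quadratic form and computing $\int_{|y-x|<\delta l(u)}|x-y|^{2-d-2s}\,dy \leq C(\delta l(u))^{2-2s}$ then gives the operator inequality $L_1 \leq C\,\delta^{2-2s}l(u)^{-2s}\chi_\delta\chi_\Omega$. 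The support factor $\chi_\delta$ emerges because the integrand vanishes unless $x$ lies within $\delta l(u)$ of $B_u=\mathrm{supp}\,\phi_u$; a short check using $|x-y|<\delta l(u)$ and $\mathrm{supp}\,\phi_u\subset B_u$ confirms that $x\in\chi_\delta$ whenever the integrand is nonzero. Tracing against $\gamma\geq 0$ yields the first term in the lemma.

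For the far-diagonal piece $L_2$ I would use the trace-norm duality $|\tr\gamma L_2|\leq \|\gamma\|\,\|L_2\|_{S^1}$ and bound the trace norm via the double-commutator identity $L_2 = C_{s,d}[\phi_u,[\phi_u,K_h]]$, where $K_h$ denotes convolution with $h(z)=|z|^{-d-2s}\chi_{|z|\geq\delta l(u)}$. The compact support of $\phi_u$ lets one control the trace norm of such a double commutator through Hilbert--Schmidt estimates on pieces such as $M_{\phi_u}K_h M_{\phi_u}$ and $M_{\phi_u^2}K_h$, all effectively localised near the support of $\phi_u$. I would split the far region further into the medium annulus $\delta l(u)\leq|x-y|\leq l(u)$, where the Lipschitz bound $(\phi_u(x)-\phi_u(y))^2\leq C l(u)^{-2}|x-y|^2$ is still useful, and the exterior $|x-y|>l(u)$, where the compact support of $\phi_u$ truncates the $y$-integration and only contributes a $\delta$-independent piece. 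The dominant contribution comes from the radial integral
\[
\int_{\delta l(u)}^{l(u)} r^{3-d-4s}\,dr \,,
\]
which produces precisely the three regimes of $r(\delta)$: for $s>1-d/4$ the lower-limit contribution dominates and gives $\delta^{4-d-4s}$; at the critical value $s=1-d/4$ a logarithm $|\ln\delta|$ arises; and for $s<1-d/4$ the upper-limit dominates and yields a $\delta$-independent constant.

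The main obstacle will be promoting these Hilbert--Schmidt-type estimates to a genuine trace-norm bound on $L_2$. This forces a careful exploitation of the double-commutator structure, so that $\|L_2\|_{S^1}$ factorises through products of compactly supported multiplications by $\phi_u$ and Hilbert--Schmidt operators localised near its support, rather than just giving an operator or Hilbert--Schmidt bound. This delicate balance produces exactly the $l(u)^{-2s}\delta^{-d+2-2s}r(\delta)$ dependence stated in the lemma, with the three cases of $r(\delta)$ arising from the radial integral above.
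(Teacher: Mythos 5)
Your treatment of the near-diagonal piece $L_1$ is essentially the paper's: you integrate the nonnegative kernel against the symmetrized density and conclude an operator bound by the row-sum function, obtaining $\tr\gamma L_1\le\tr\gamma\big(C\delta^{2-2s}l(u)^{-2s}\chi_\delta\chi_\Omega\big)$. This is sound and matches the paper's $\theta$-bound.

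The far-diagonal piece is where the proposal has a genuine gap. You reduce to a \emph{trace-norm} bound $|\tr\gamma L_2|\le\|\gamma\|\,\|L_2\|_{S^1}$ and then try to control $\|L_2\|_{S^1}$ via the double-commutator structure and Hilbert--Schmidt estimates. But no clean trace-norm estimate is supplied, and in fact the paper deliberately avoids the trace norm. The mechanism in the paper (adapted from Lieb--Yau, Thm.\ 10) only requires the \emph{Hilbert--Schmidt} norm of the far-diagonal piece $L^0$, which is elementary to compute by the double integral of the kernel squared. The point you are missing is that, because $L^0(x,y)=0$ when both $x,y$ lie outside $B_u$, one can write $L^0 = \chi_0 L^0 + (1-\chi_0)L^0\chi_0$, and then Cauchy--Schwarz on the trace gives $|\tr\gamma L^0|\le C\,\|L^0\|_{HS}\sqrt{\|\gamma\|\,\tr\gamma\chi_0}$. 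A Peter--Paul inequality with free parameter $\varepsilon$ then yields exactly the form
$$
\tr\gamma L_\phi\ \le\ \tr\gamma\big(\theta+\varepsilon\chi_0\big)\ +\ \frac{\|\gamma\|}{2\varepsilon}\,\|L^0\|_{HS}^2\,,
$$
and optimizing $\varepsilon=\delta^{2-2s}$ produces the $\delta^{-d+2-2s}$ prefactor in the statement. The $\varepsilon\chi_0$ term that is absorbed into $\chi_\delta$ in the first term of the lemma is therefore not a cosmetic extra: it is the price paid to avoid the trace norm altogether.

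Concretely: (i) whether $L_2$ is trace class with a bound of the claimed $\delta$-dependence is not at all obvious (the kernel is merely bounded, with a discontinuity at $|x-y|=\delta l(u)$, and at $s=1$ the analogous double commutator is multiplication by $|\nabla\phi|^2$, which is not even compact); (ii) even granting a trace-norm bound, it should exceed the Hilbert--Schmidt norm by roughly $\delta^{-d/2}$, a fact you would have to establish, and the double-commutator factorization does not obviously deliver it. Your radial computation $\int_{\delta l(u)}^{l(u)}r^{3-d-4s}\,dr$ is correct for $\|L^0\|_{HS}^2$, and your identification of the three regimes is correct; the gap is solely in the route from that Hilbert--Schmidt quantity to a trace estimate. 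Replacing your step $|\tr\gamma L_2|\le\|\gamma\|\,\|L_2\|_{S^1}$ by the Cauchy--Schwarz/Peter--Paul argument above closes the gap and recovers the paper's proof.
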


\begin{proof}
By translation and scaling we can assume that $u = 0$ and $l(u) = 1$, and hence
$\phi_u = \phi$. (This rescaling changes $\Omega$, but the bound we are
going to prove is independent of the domain and therefore not affected by this
dilation.) We set
$$
L^1_{\phi}(x,y) \, = \, \left\{ \begin{array}{lr} L_{\phi}(x,y) \,
\chi_\delta(x) \, \chi_\delta(y) & \mathrm{if}\ |x-y| < \delta \\
0 & \mathrm{if}\ |x-y| \geq \delta \end{array} \right. \, ,
$$
$L^0_{\phi}(x,y) = L_{\phi}(x,y) - L^1_{\phi}(x,y)$ and $\theta(x) =
\int L_{\phi}^1(x,y) \, dy$. By a simple adaption of the arguments of \cite[Thm. 10]{LieYau88} we find that
for any $\epsilon>0$
\begin{equation}
\label{eq:loc_trlphi}
\tr \, \gamma L_{\phi} \, \leq \, \tr \, \gamma \lk \theta + \varepsilon \, 
\chi_0 \rk + \frac{\| \gamma \|}{2 \varepsilon } \,  \tr \lk L^0_{\phi} \rk^2
\, .
\end{equation}
It remains to bound $\theta$ and $\tr ( L^0_{\phi} )^2$.

We begin by estimating $\theta$.
By definition, for $|x| \geq 1 + \delta$ we have $L^1_{\phi}(x,y) = 0$ and hence
$\theta(x) = 0$, and for $|x| < 1 + \delta$ we get
$$
\theta(x) \!=\! C_{s,d}\! \int_{\begin{subarray}{l} |x-y|< \delta \\ |y| < 1 +
\delta \end{subarray}} \frac{(\phi(x)- \phi(y))^2 }{|x-y|^{d+2s}} \chi_{\Omega}(x) \chi_\Omega(y) dy  \leq  C \left\| \nabla \phi \right\|^2_\infty \chi_\Omega(x) \!\int_{|x-y|<\delta} \! \frac{1}{|x-y|^{d+2s-2}}  dy \, .
$$
Thus, for all $x \in \R^d$
\begin{equation}
\label{eq:loc_theta}
\theta(x) \, \leq \, C \, \delta^{2-2s} \, \chi_\delta(x) \, \chi_\Omega(x) \, .
\end{equation}

Finally, we estimate $\tr ( L^0_{\phi} )^2$. The symmetry of $L_{\phi}^0(x,y)$ implies
$$
\tr \lk L^0_{\phi} \rk^2 \, \leq \,  C \iint_A   \lk \frac{(\phi(x)- \phi(y))^2}{|x-y|^{d+2s}} \rk^2 dx \, dy \, 
$$
where $A$ denotes the set $\{ (x,y) \in \R^d \times \R^d \, : \, |x| < \min(|y|,
1 ) \, , \, |x-y| \geq \delta \}$. Set $A_1 = \{ (x,y) \in A \, : \, |y| \geq 2
\}$ and $A_2 = \{ (x,y) \in A \, : \, |y| < 2 \}$. Then
$$
\tr \lk L^0_{\phi} \rk^2  \leq  C \iint_{A_1}  \lk
\frac{\phi(x)^4}{|x-y|^{2d+4s}} \rk dx \, dy + C \, \left\| \nabla \phi
\right\|^4_\infty \, \iint_{A_2} \frac{1}{|x-y|^{2d+4s-4}} \, dx \, dy \, .
$$
The right-hand side is bounded by $C \delta^{-d-4s+4}$ for $1-d/4 < s <1$, by $C |\ln \delta|$ for $0 < s = 1-d/4$, and by $C$ for $0 < s < 1-d/4$.
Finally, we choose $\varepsilon = \delta^{2-2s}$ and combining the last estimates with (\ref{eq:loc_trlphi}) and (\ref{eq:loc_theta}) yields the claimed result.
\end{proof}

\begin{proof}[Proof of Proposition \ref{pro:loc}]
We apply Lemma \ref{lem:loc_rem} with a parameter $0<\delta_u\leq 1/2$ to be specified
later. For ease of notation we write $\chi_u$ instead of $\chi_{\delta_u}$.
Identities (\ref{eq:loc_kernel}) and (\ref{eq:loc_trace}) and the estimate from
Lemma \ref{lem:loc_rem} imply
\begin{align}
\nonumber
\tr \, \gamma (-\Delta)^{s} \, \geq \, & \int_{\Omega^*} \tr \, \gamma \lk
\phi_u(-\Delta)^{s} \phi_u - C \delta_u^{2-2s} l(u)^{-2s} \chi_u \,
\chi_\Omega \rk l(u)^{-d} \, du \\
\label{eq:loc_trgamma}
&- \, C \, \| \gamma \| \,  \int_{\Omega^*} \delta_u^{-d+2-2s} r(\delta_u)
l(u)^{-d-2s}\, du \, .
\end{align}

If the supports of $\chi_u$ and $\phi_{u'}$ overlap, we have $|u-u'| \leq (3/2)
l(u) + l(u')$. It follows that $l(u') - l(u) \leq \|\nabla l \|_\infty \lk
(3/2) l(u) + l(u') \rk$. Since $\| \nabla l \|_\infty \leq 1/2$ we find $l(u')
\leq C l(u)$ and $l(u)^{-1} \leq C l(u')^{-1}$. Similarly, we get $l(u) \leq C
l(u')$. We assume now that $\delta_u$ satisfies
\begin{equation}
 \label{eq:deltachoice}
\delta_u \leq C \delta_{u'}
\quad\mathrm{if}\ |u-u'| \leq (3/2) (l(u) + l(u')) \,.
\end{equation}
Using these locally uniform bounds on $l(u)/l(u')$ and $\delta_u/\delta_{u'}$,
together with (\ref{eq:int:unity}), we can deduce the pointwise bound for all $x\in\R^d$
\begin{align*}
\int_{\Omega^*}\delta_u^{2-2s} l(u)^{-2s} \, \chi_u(x) \, \chi_\Omega(x) \, 
\frac{du}{l(u)^d} 
& = \int_{\Omega^*} \delta_u^{2-2s} l(u)^{-2s} \, \chi_u (x) \, \chi_\Omega(x)
\lk \int \phi_{u'}(x)^2 \frac{du'}{l(u')^d} \rk \frac{du}{l(u)^d} \\
& \leq C \int_{\Omega^*} \phi_{u'}(x) \,\delta_{u'}^{2-2s}  l(u')^{-2s} \,
\phi_{u'}(x) \, \frac{du'}{l(u')^d} \,.
\end{align*}
Rewriting the last integral with $u$ as integration variable, in view of (\ref{eq:loc_trgamma}), we find
$$
\tr \, \gamma (-\Delta)^{s} \geq \int_{\Omega^*} \!\textnormal{Tr} \, \gamma
\lk \phi_u \lk (-\Delta)^{s} - \frac{C \delta_u^{2-2s}}{l(u)^{2s}} \rk \phi_u
\rk \frac{du}{l(u)^d} - C \|\gamma\| \int_{\Omega^*} \delta_u^{-d+2-2s}
r(\delta_u) \frac{du}{l(u)^{d+2s}} \, .
$$
By the variational principle it follows that
\begin{eqnarray}
\nonumber
\tr (H_\Omega)_-   &=& -  \inf_{0 \leq \gamma \leq 1} \tr \, \gamma \lk (-h^2 \Delta)^{s}- 1 \rk \\
\nonumber
& \leq & \int_{\Omega^* } \tr \lk \phi_u \lk (-h^2 \Delta)^{s} -1 - C  h^{2s}
\delta_u^{2-2s}l(u)^{-2s} \rk \phi_u \rk_- \frac{du}{l(u)^d} \\ 
\label{eq:loc_trh}
&& + C h^{2s}  \int_{\Omega^*} \delta_u^{-d+2-2s}
r(\delta_u)  \frac{du}{l(u)^{d+2s}}  \, . 
\end{eqnarray}
To bound the first term, we use Lemma \ref{lem:berezin}. For any $u \in \R^d$,
let $\rho_u$ be another parameter satisfying $0< \rho_u \leq 1/2$ and estimate
\begin{eqnarray*}
\lefteqn{ \textnormal{Tr} \lk \phi_u \lk (-h^2\Delta)^{s} -1 - C h^{2s}
\delta_u^{2-2s} \,  l(u)^{-2s} \rk \phi_u \rk_- } \\
& \leq & \textnormal{Tr} \lk \phi_u H_\Omega \phi_u \rk_- + C \, \textnormal{Tr}
\lk \phi_u \lk \rho_u  h^{2s} (-\Delta)^{s} - \rho_u- h^{2s} \delta_u^{2-2s}
l(u)^{-2s}  \rk \phi_u \rk_- \\
&\leq& \textnormal{Tr} \lk \phi_u  H_\Omega \phi_u \rk_- + C \, l(u)^d (\rho_u
h^{2s})^{-d/(2s)} \lk \rho_u + h^{2s} \delta_u^{2-2s} l(u)^{-2s} \rk^{1+d/(2s)}
\, .
\end{eqnarray*}
We pick $\rho_u=h^{2s} \, \delta_u^{2-2s} \, l(u)^{-2s}$. By \eqref{eq:int:lulo}
and our assumption that $\delta_u\leq 1/2$, we see that $\rho_u \leq
(h/l_0)^{2s} 2^{6s-2}$. We assume now that $h\leq C^{-1} l_0$ (with a possibly
large constant $C$) in order to guarantee that $\rho_u\leq 1/2$. With this
choice we find
\begin{equation}
\label{eq:loc_trfirstterm}
\textnormal{Tr} \lk \phi_u \lk (-h^2\Delta)^{s} -1 - \frac{C h^{2s}
\delta_u^{2-2s}}{l(u)^{2s}} \rk \phi_u \rk_-  \, \leq \, \textnormal{Tr} \lk
\phi_u  H_\Omega \phi_u \rk_- + C \,  \frac{\delta_u^{2-2s} \,
l(u)^{d-2s}}{h^{d-2s}} \, .
\end{equation}
Combining (\ref{eq:loc_trh}) and (\ref{eq:loc_trfirstterm}) we obtain
\begin{equation}
\label{eq:loc_trfinal}
\textnormal{Tr}  (H_\Omega)_-  \leq   \int_{\Omega^*} \textnormal{Tr} \lk \phi_u H_\Omega \phi_u \rk_- \frac{du}{l(u)^d} 
+  C \int_{\Omega^*} \lk  \frac{\delta_u^{2-2s}}{h^{d-2s}l(u)^{2s}} +
\frac{h^{2s}\delta_u^{-d+2-2s} r(\delta_u)}{l(u)^{d+2s}}  \rk du \, .
\end{equation}

At this point we choose $\delta_u$ in order to minimize the second integrand, which
we shall denote by $I_u$. We pick
$$
\delta_u \, = \, \left\{ \begin{array}{ll}
h/l(u) & \textnormal{if} \  1-d/4 < s <1 \\
(h/l(u)) |\ln(l(u)/h)|^{1/(4-4s)} & \textnormal{if} \ 0< s = 1-d/4  \\
(h/l(u))^{d/(4-4s)} & \textnormal{if} \  0< s <1-d/4
\end{array} \right.
$$
and note that $\delta_u \leq 1/2$ if $h \leq C^{-1} l_0$ by \eqref{eq:int:lulo}.
Moreover, \eqref{eq:deltachoice} is an easy consequence of the corresponding
estimate for $l(u)/l(u')$. With this choice we arrive at the bounds
$$
I_u \, \leq \, C \left\{ \begin{array}{ll}
h^{-d+2} l(u)^{-2} & \textnormal{if} \  1-d/4 < s <1 \\
h^{-d+2} l(u)^{-2} |\ln (l(u)/h)|^{1/2} & \textnormal{if} \ 0< s = 1-d/4  \\
h^{-d/2+2s} l(u)^{-d/2-2s} & \textnormal{if} \  0< s <1-d/4
\end{array} \right. \, .
$$
Finally, we integrate with respect to $u$. The same arguments that lead to
(\ref{eq:int:U1}) and (\ref{eq:int:U2}) yield
$$
\int_{\Omega^*} I_u\, du \, \leq \, C \left\{ \begin{array}{ll}
h^{-d+2} l_0^{-1} & \textnormal{if} \  1-d/4 < s <1 \\
h^{-d+2} l_0^{-1} |\ln (l_0/h) |^{1/2} & \textnormal{if} \ 0< s = 1-d/4  \\
h^{-d/2+2s} l_0^{-d/2-2s+1} & \textnormal{if} \  0< s <1-d/4
\end{array} \right. \, .
$$
This completes the proof of the lower bound with the remainder stated in
Proposition \ref{pro:loc}.

To prove the upper bound we put
$$
\gamma \, = \, \int_{\R^d} \phi_u \, \lk \phi_u H_\Omega \phi_u \rk_-^0 \, \phi_u \, l(u)^{-d} \, du \, .
$$
Obviously, $\gamma \geq 0$ holds and in view of (\ref{eq:int:unity}) also $\gamma \leq 1$. The range of $\gamma$ belongs to $\mathcal{H}^s(\Omega)$ and by the variational principle it follows that 
$$
- \tr (H_\Omega)_- \, \leq \, \tr \, \gamma H_\Omega \, =  \, -  \int_{\R^d} \tr \lk \phi_u H_\Omega \phi_u \rk_- l(u)^{-d} \, du \, .
$$ 
This yields the upper bound and finishes the proof of Proposition \ref{pro:loc}.
\end{proof}


\section{Discussion of the second term}
\label{sec:snd}

\subsection{Representations for the second constant}
In this section we study the second term of \eqref{eq:main} in more detail. First we derive representation \eqref{eq:main2}.

\begin{proposition}\label{const}
One has
\begin{align}\label{eq:const}
\Lz & = \int_{\R^{d-1}} \zeta(|p'|^{-2s})\, \frac{dp'}{(2\pi)^{d-1}} \notag \\
& = \frac{|\Sph^{d-2}|}{(2\pi)^{d-1}} \frac{2s}{(d-1)(d-1+2s)} \, \tr \left[ \chi A^{-(d-1)/2s} \chi - (A^+)^{-(d-1)/2s} \right] \,. 
\end{align}
Here $\chi$ is the characteristic function of $\R_+$ and 
\begin{equation}
\label{eq:zeta}
\zeta(\mu) = \mu^{-1} \int_0^\infty \lk  a(\mu)-a^+(t,\mu) \rk \,dt \,.
\end{equation}
\end{proposition}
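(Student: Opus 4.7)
The plan is to verify both equalities by direct computation from the definition $\Lz = \int_0^\infty K(t)\,dt$ with $K$ as in \eqref{eq:K}, relying on the integrability bound of Lemma \ref{aest} at each interchange.

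For the first equality, I would apply Fubini to swap the $t$- and $\xi'$-integrals in $\int_0^\infty K(t)\,dt$ and then substitute $u = t|\xi'|$ in the inner integral over $t$. The Jacobian $|\xi'|^{-1}$ absorbs one power of $|\xi'|$, giving
$$
\Lz \, = \, \frac{1}{(2\pi)^{d-1}} \int_{\R^{d-1}} |\xi'|^{2s} \int_0^\infty \lk a(|\xi'|^{-2s}) - a^+(u,|\xi'|^{-2s}) \rk du \, d\xi' \, .
$$
Since $\zeta(\mu) = \mu^{-1} \int_0^\infty (a(\mu)-a^+(u,\mu))\,du$, the inner integral equals $|\xi'|^{-2s}\,\zeta(|\xi'|^{-2s})$, and the $|\xi'|^{2s}$ prefactor cancels, producing the first line of \eqref{eq:const}. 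Fubini is legitimate because $A\geq 1$ and $A^+\geq 1$ force the integrand to vanish for $|\xi'|^{-2s}<1$, while Lemma \ref{aest} (with $\gamma=0$) bounds the inner $u$-integral by $C\mu((\ln\mu)^2+1)$ for $\mu\geq 1$, yielding absolute convergence.

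For the second equality, I would pass to polar coordinates in $\R^{d-1}$ and substitute $\mu = r^{-2s}$ with $r=|\xi'|$ to obtain
$$
\int_{\R^{d-1}} \zeta(|\xi'|^{-2s}) \, d\xi' \, = \, |\Sph^{d-2}| \int_0^\infty \zeta(r^{-2s}) \, r^{d-2} \, dr \, = \, \frac{|\Sph^{d-2}|}{2s} \int_0^\infty \zeta(\mu) \, \mu^{-\kappa-1} \, d\mu \, ,
$$
where $\kappa = (d-1)/(2s)$. Next, I would invoke the elementary scalar identity
$$
\int_0^\infty (X-\mu)_- \, \mu^{-\kappa-2} \, d\mu \, = \, \frac{X^{-\kappa}}{\kappa(\kappa+1)} \qquad (X>0) \, ,
$$
which is valid by a direct computation since $\kappa>0$ and which transfers to the functional calculus of $A$ and $A^+$ (both bounded below by $1$). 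Applied formally to the difference $\chi A^{-\kappa}\chi - (A^+)^{-\kappa}$ and combined with the definition of $\zeta$, this turns the preceding $\mu$-integral into
$$
\frac{1}{\kappa(\kappa+1)} \tr \lk \chi A^{-\kappa}\chi - (A^+)^{-\kappa} \rk \, .
$$
Multiplying by $(2\pi)^{-(d-1)}$ and simplifying via $\kappa(\kappa+1) = (d-1)(d-1+2s)/(2s)^2$ yields the coefficient $2s/((d-1)(d-1+2s))$ in the statement.

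The main obstacle is the rigorous interpretation of the trace in the last step: neither $\chi A^{-\kappa}\chi$ nor $(A^+)^{-\kappa}$ is trace class on its own, only their difference is. To handle this, I would exploit translation invariance of $A$ on $\R$, which implies that its diagonal kernel $a(\mu)$ is constant in $t$ and that $(A)^{-\kappa}(t,t)$ is constant in $t$ as well. I would then write the trace as an integral over $t$ of the difference of diagonal kernels, apply the spectral identity pointwise to each diagonal kernel, and use the Lemma \ref{aest} bound once more to invoke Fubini between the $t$- and $\mu$-integrations. This reduction of the trace to the one-dimensional diagonal computation is the only delicate point; once it is in place, the two equalities follow from the algebraic manipulations above.
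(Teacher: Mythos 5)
Your proposal is correct and takes essentially the same route as the paper's. For the first equality, both swap integration order and substitute $u=t|\xi'|$; for the second, your scalar identity $\int_0^\infty (X-\mu)_-\,\mu^{-\kappa-2}\,d\mu = X^{-\kappa}/(\kappa(\kappa+1))$ is exactly the paper's
$$
\int_{\R^{d-1}} |p'|^{2s} (E-|p'|^{-2s})_- \frac{dp'}{(2\pi)^{d-1}}
= \frac{|\Sph^{d-2}|}{(2\pi)^{d-1}} \frac{2s}{(d-1)(d-1+2s)} E^{-(d-1)/2s}
$$
after your polar-coordinate change $\mu=r^{-2s}$, and both proofs then apply the spectral theorem to the diagonal kernels of $(A-\mu)_-$ and $(A^+-\mu)_-$ and integrate the difference over $t$. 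Two small remarks. First, the paper (see the Remark following the proposition) indicates that even the \emph{difference} $\chi A^{-\kappa}\chi-(A^+)^{-\kappa}$ is not expected to be trace class; only the integral of the diagonal kernels converges, so the ``$\tr$'' is to be read in exactly the regularized sense you describe, not as a genuine trace. Second, your phrase ``apply the spectral identity pointwise to each diagonal kernel'' has to be read with care: for $d=2$ one has $A^{-(d-1)/2s}(t,t)=\frac{1}{2\pi}\int_\R(1+\lambda^2)^{-1/2}d\lambda=\infty$, so the two diagonal kernels must be subtracted before the $\mu$-integral is evaluated, which is indeed what the Lemma~\ref{aest} bound licenses via Fubini on the difference $a(\mu)-a^+(t,\mu)$.
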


\begin{proof}
The first identity follows immediately from \eqref{eq:K} and \eqref{eq:mod:lz}. The second identity follows from the fact that
$$
\int_{\R^{d-1}} |p'|^{2s} (E-|p'|^{-2s})_- \frac{dp'}{(2\pi)^{d-1}} 
= \frac{|\Sph^{d-2}|}{(2\pi)^{d-1}} \frac{2s}{(d-1)(d-1+2s)} E^{-(d-1)/2s}
$$
for any $E>0$, which by the spectral theorem implies that
$$
\int_{\R^{d-1}} |p'|^{2s} a^+(t,|p'|^{-2s}) \frac{dp'}{(2\pi)^{d-1}} 
= \frac{|\Sph^{d-2}|}{(2\pi)^{d-1}} \frac{2s}{(d-1)(d-1+2s)} (A^+)^{-(d-1)/2s}(t,t)
$$
and similarly for $A$.
\end{proof}

\begin{remark}
There is another representation, namely,
\begin{equation}
\label{eq:constxi}
\Lz = \frac{2s}{d-1+2s} \int_{\R^{d-1}} \xi(|p'|^{-2s})\,
\frac{dp'}{(2\pi)^{d-1}} \,,
\end{equation}
where
\begin{equation}\label{eq:xi}
\xi(\mu) = \int_0^\infty \lk  e(\mu)-e^+(t,\mu) \rk \,dt \,.
\end{equation}
Here $e(\mu)$ and $e^+(t,\mu)$ are the diagonals of the integral kernels of the spectral projcections $(A-\mu)_-^0$ and $(A^+-\mu)_-^0$, respectively. We have not shown that the integral in \eqref{eq:xi} converges, since we will not use \eqref{eq:constxi} in the remainder of this paper. Identity \eqref{eq:constxi} is an easy consequence of \eqref{eq:const} and the fact that
$$
a(\mu) = \int_0^\mu e(\tau) \,d\tau \,
\qquad
a^+(t,\mu) = \int_0^\mu e^+(t,\tau) \,d\tau
$$
which follows by the spectral theorem from $(E-\mu)_- = \int_0^\mu (E-\tau)_-^0 \,d\tau$. Representation \eqref{eq:constxi} is natural since in terms of this function the conjectured formula for the \emph{number} of negative eigenvalues of $H_\Omega$ takes the form
$$
\iint_{T^*\Omega} \lk |p|^{2s}-1 \rk_-^0 \frac{dp dx}{(2\pi h)^d}  -
\iint_{T^*\partial \Omega} \xi(|p'|^{-2s}) \frac{dp' d\sigma(x)}{(2\pi h)^{d-1}}
+ o(h^{-d+1}) \, ,
$$
which is the analogue of well-known two-term semi-classical formulas in the
local case; see, for instance, \cite{Ivrii80a,SaVa}. The function $\xi$ plays
the role of a spectral shift. Note that we avoided to write \eqref{eq:zeta} and
\eqref{eq:xi} in terms of a trace. While the integrals on the diagonals
converge, we do not expect the operators to be trace class, see \cite{Pu}.
\end{remark}

\begin{remark}\label{rephp}
Yet another representation is
$$
h^{-d+1} \Lz = \int_0^\infty \left( H_-(x,x) - (H^+)_-(x,x) \right) \,dx_d \,.
$$
(Note that the right side is independent of $x'$.) This follows from \eqref{eq:densh+} and the corresponding formula for $H$. Using this representation one sees that our asymptotic formula coincides with the one obtained in \cite{BanKul08,BaKuSi09}.
\end{remark}

Finally, we refer to \eqref{eq:ap:lz} in the appendix for a representation of $\Lz$ in terms of generalized eigenfunctions of $A^+$.


\subsection{A lemma about operator monotone functions}

To prove that the constant $\Lz$ is positive we shall make use of the following

\begin{lemma}\label{song}
Let $B$ be a non-negative operator with $\ker B=\{0\}$ and let $P$ be an orthogonal projection. Then for any operator monotone function $\phi:(0,\infty)\to \R$,
\begin{equation}
\label{eq:song}
P\phi(PBP)P \geq P\phi(B)P \,.
\end{equation}
If, in addition, $B$ is positive definite and $\phi$ is not affine linear, then $\phi(PBP)=P\phi(B)P$ implies that the range of $P$ is a reducing subspace of $B$.
\end{lemma}

We recall that, by definition, the range of $P$ is a reducing subspace of a non-negative (possibly unbounded) operator if $(B+\tau)^{-1}\ran P\subset\ran P$ for some $\tau>0$. We note that this is equivalent to $(B+\tau)^{-1}$ commuting with $P$, and we see that the definition is independent of $\tau$ since
\begin{align*}
& (B+\tau')^{-1} P - P (B+\tau')^{-1} \\
& \quad = (B+\tau) (B+\tau')^{-1} \left( (B+\tau)^{-1} P - P (B+\tau)^{-1} \right) (B+\tau) (B+\tau')^{-1} \,.
\end{align*}

Hansen \cite{Ha} has proved Lemma \ref{song} for bounded $B$ and without the equality statement. It is not clear how to extend his proof to our general case and we provide a different argument.

For our proof we recall L\"owner's theorem \cite{Do} which characterizes operator monotone functions on $(0,\infty)$ by the representation
\begin{equation}
\label{eq:bernstein}
\phi(E) =a + bE + \int_{[0,\infty)} \frac{\tau E -1}{E+\tau} \, d\rho(\tau)
\end{equation}
with $a\in\R$, $b\geq 0$ and a finite, positive measure $\rho$ on $[0,\infty)$. Note that the function $\phi(E)=E^s$, $0<s< 1$, to which we apply this lemma in the next sections, is operator monotone in view of the representation
$$
E^s = \frac{\sin(\pi s)}{\pi} \int_0^\infty \tau^{s-1} \frac{E}{E+\tau} \,d\tau \,,\qquad 0<s<1 \,.
$$
This is of the form \eqref{eq:bernstein} with $d\rho(\tau)=\!(\sin(\pi s)/\pi)(1+\tau^2)^{-1}\tau^sd\tau$, $a=(\sin(\pi s)/\pi)\int_0^\infty\!\tau^{-1}d\rho(\tau)$ and $b=0$.

\begin{proof}
We first prove that
\begin{equation}
\label{eq:projectors}
P B^{-1} P \, \geq \, P(PBP)^{-1}P \,.
\end{equation} 
Here on the right side, the operator $PBP$ is inverted as an operator on the range of $P$.

By a monotone convergence argument we may assume that $B$ is positive definite. Let $f$ be an arbitrary element in the Hilbert space. For any $\psi$ in the form domain of $B$ we can write
$$
(f,PB^{-1}Pf) = -(\psi,B\psi) + 2\re (Pf,\psi) + \|B^{1/2}\psi - B^{-1/2}Pf\|^2 \,.
$$
We apply this to $\psi=P (PBP)^{-1} Pf$. Note that $\psi$ belongs to the operator domain of $PBP$ and hence also to the form domain of $PBP$, which means that $P\psi=\psi$ belongs to the form domain of $B$. We find
$$
(f,PB^{-1}Pf) = (f,P(PBP)^{-1}Pf) + \|B^{1/2}P(PBP)^{-1}Pf - B^{-1/2}Pf\|^2 \,.
$$
This proves \eqref{eq:projectors}.

Moreover, if equality holds in \eqref{eq:projectors} (still assuming that $B$ is positive definite), then $B^{1/2}P(PBP)^{-1}Pf = B^{-1/2}Pf$ for all $f$, that is, $P(PBP)^{-1}Pf = B^{-1}Pf$ for all $f$, which means that $B^{-1}\ran P\subset\ran P$. Thus, $\ran P$ reduces $B$.

Now assume that $\phi$ is of the form \eqref{eq:bernstein} and rewrite $(\tau E-1)/(E+\tau) = \tau -(\tau^2+1)/(E+\tau)$. By the spectral theorem,
$$
P\phi(B)P = aP + bPBP + \int_{[0,\infty)} \left( \tau P - (\tau^2+1) P(B+\tau)^{-1}P \right) \,d\rho(\tau) \,.
$$
Similarly, $PBP$ is a self-adjoint operator in the range of $P$ and by the spectral theorem in that space
$$
P\phi(PBP)P = aP + bPBP + \int_{[0,\infty)} \left( \tau P - (\tau^2+1) P(PBP+\tau P)^{-1}P \right) \,d\rho(\tau) \,.
$$
Here, as before $PBP+\tau P$ is inverted in the range of $P$. Thus,
\begin{align*}
P\phi(PBP)P - P\phi(B)P 
& = - \int_{[0,\infty)} \lk P(PBP + \tau P)^{-1}P - P(B+\tau)^{-1}P \rk (\tau^2+1) \,d\rho(\tau) \, .
\end{align*}
By \eqref{eq:projectors} with $B$ replaced by $B+\tau$, the integrand is a non-positive operator for every $\tau\in[0,\infty)$. Thus, $\phi(PBP)\geq P\phi(B)P$, as claimed.

This argument show that $\phi(PBP)= P\phi(B)P$ implies $P(PBP + \tau P)^{-1}P = P(B+\tau)^{-1}P$ for $\rho$-a.e. $\tau\in[0,\infty)$. If $\phi$ is not affine linear, then the measure $\rho$ is not identically zero and there is a $\tau\in[0,\infty)$ with $P(PBP + \tau P)^{-1}P = P(B+\tau)^{-1}P$. Now the analysis of equality in \eqref{eq:projectors} (note that $B+\tau$ is positive definite) implies that $\ran P$ reduces $B$.
\end{proof}


\subsection{Positivity of the constant}

Here we shall prove

\begin{proposition}\label{constpos}
For any $0<s<1$ and $d\geq 2$, one has $\Lz>0$.
\end{proposition}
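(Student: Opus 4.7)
The plan is to base everything on the representation established in Proposition \ref{const}:
$$
\Lz = \frac{|\Sph^{d-2}|}{(2\pi)^{d-1}} \cdot \frac{2s}{(d-1)(d-1+2s)} \, \tr\left[\chi A^{-\beta}\chi - (A^+)^{-\beta}\right], \qquad \beta = \frac{d-1}{2s} > 0.
$$
Since the prefactor is manifestly positive, everything reduces to showing strict positivity of the trace difference. I would attack this via the Laplace representation $X^{-\beta} = \Gamma(\beta)^{-1} \int_0^\infty t^{\beta-1} e^{-tX}\,dt$, which formally rewrites the trace as
$$
\tr\left[\chi A^{-\beta}\chi - (A^+)^{-\beta}\right] = \frac{1}{\Gamma(\beta)} \int_0^\infty t^{\beta-1} \int_0^\infty \bigl(e^{-tA}(x,x) - e^{-tA^+}(x,x)\bigr)\,dx\,dt,
$$
reducing the question to showing that the inner integrand is nonnegative for every $t>0$, with strict inequality on a set of positive $t$-measure.

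The nonnegativity is a standard domain-monotonicity argument. The form identity $(u, A^+ u)_{L^2(\R_+)} = (Ju, A Ju)_{L^2(\R)}$ for $u \in \mathcal{H}^s(\R_+)$ (with $J$ the extension by zero), combined with the inclusion $J\mathcal{H}^s(\R_+) \subset H^s(\R)$, shows via the variational characterization of resolvents that $(A^+ + \lambda)^{-1} \leq \chi(A+\lambda)^{-1}\chi$ as operators on $L^2(\R_+)$ for every $\lambda > 0$. Laplace-transforming this resolvent inequality, one obtains the pointwise heat-kernel domination $e^{-tA^+}(x,y) \leq e^{-tA}(x,y)$ for $x,y > 0$. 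Integrating the diagonal over $\R_+$ already gives $\Lz \geq 0$.

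Strict positivity is the substantive step and the main obstacle. The diagonal $e^{-tA}(x,x)$ is a positive constant in $x$ by translation invariance of $A$ on $\R$, so strict inequality on an open set would follow by continuity of the kernels as soon as one knows that $e^{-tA^+}(x,x)$ is not identically equal to that constant. To see that the diagonal of $e^{-tA^+}$ genuinely depends on $x$, I would use the explicit generalized eigenfunctions $F_\lambda$ of $A^+$ provided by Kwa\'snicki's construction (as developed in the appendix): one has a spectral representation
$$
e^{-tA^+}(x,x) = \int_1^\infty e^{-t\lambda}\,|F_\lambda(x)|^2\,dm(\lambda),
$$
and the $F_\lambda$ exhibit a nontrivial boundary profile (for instance, for $s > 1/2$ one has $F_\lambda(0)=0$ since $\mathcal{H}^s(\R_+) \hookrightarrow C^0$ imposes the Dirichlet condition, while for $s \leq 1/2$ the boundary asymptotics of $F_\lambda$ read off from the appendix still force non-constancy in $x$). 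This manifestly breaks translation invariance of the diagonal, upgrading the nonnegativity to strict positivity on an open set.

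Integrating the strictly positive integrand against the positive weight $t^{\beta-1}\,dt/\Gamma(\beta)$ then yields $\tr[\chi A^{-\beta}\chi - (A^+)^{-\beta}] > 0$, hence $\Lz > 0$. The hard part, as indicated, is not the abstract form-monotonicity (which only gives $\Lz \geq 0$) but the upgrade to strict inequality, which requires concrete information about $A^+$ beyond what is visible at the level of quadratic forms; the generalized-eigenfunction analysis of the appendix is the natural source for this information.
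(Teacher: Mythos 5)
Your overall architecture matches the paper's: use the second representation of $\Lz$ from Proposition \ref{const}, write $E^{-(d-1)/2s}$ via the $\Gamma$-function (Laplace) formula to reduce to a heat-kernel comparison, establish $e^{-tA^+}(x,x)\leq e^{-tA}(x,x)$, and upgrade to strict inequality because $A^+\not\equiv A$. However, the step on which you base the heat-kernel domination has a genuine gap. You assert that the operator inequality $(A^+ +\lambda)^{-1}\leq \chi(A+\lambda)^{-1}\chi$ (which you correctly derive from the form inclusion) yields the pointwise bound $e^{-tA^+}(x,y)\leq e^{-tA}(x,y)$ by ``Laplace-transforming.'' That inference is not valid: an operator inequality for resolvents at every $\lambda>0$ does not imply a pointwise inequality of kernels, and the inverse Laplace transform does not preserve order (the function $x\mapsto e^{-tx}$ is not operator monotone, and Laplace-transform ordering of scalar families does not transfer backwards either). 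To get the pointwise kernel comparison one needs an argument that works at the kernel level from the start.

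The paper closes exactly this point by invoking Trotter's product formula (equivalently, the probabilistic picture of the subordinated Brownian motion killed on exiting $\R_+$, as in \cite{BanKul08}): since $e^{-tA}$ has a nonnegative kernel, each Trotter factor restricted to $\R_+$ is dominated pointwise by the unrestricted one, and passing to the limit yields $\exp(-tA)(x,y)\geq \exp(-tA^+)(x,y)$ for all $x,y>0$. An alternative rigorous route to the same conclusion is the Ouhabaz/Hess--Schrader--Uhlenbrock domination criterion, but that requires verifying positivity preservation of $e^{-tA}$ and the ideal property of the form domain, i.e.\ genuinely more than the quadratic-form inclusion you used. Your subsequent discussion of strict positivity via the generalized eigenfunctions $F_\lambda$ is correct but heavier than necessary: once pointwise domination is in hand, the fact that $A\not\equiv A^+$ already forces the diagonal difference to be nonzero on a set of positive measure (by continuity of the kernels), which suffices after integration.
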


\begin{proof}
We shall show that for arbitrary non-negative operators $B$ with $\ker B=\{0\}$ and orthogonal projections $P$,
\begin{equation}
\label{eq:traceineq1}
\tr \left[ P B^{-\alpha} P - (PBP)^{-\alpha} \right] \geq 0
\qquad\text{for all}\ \alpha>0 \,.
\end{equation}
If $B$ is positive definite, then equality holds iff the range of $P$ is a reducing subspace of $B$.

We apply this to the second representation in \eqref{eq:const} with $B=A$ and $P=\chi$ and note that $A^+=\chi A\chi$. Thus \eqref{eq:traceineq1} implies $\Lz\geq 0$. Since $B\geq 1$ and since the range of $P$ is not a reducing subspace for $B$ (indeed, $(A+\tau)^{-1}f$ does not necessarily vanish on $(-\infty,0)$ if $f$ does), we even have $\Lz>0$, as claimed.

It remains to prove \eqref{eq:traceineq1}. The argument is somewhat different depending on whether $\alpha\leq 1$ or not. In the first case we learn from Lemma \ref{song} with $\phi(E)=-E^{-\alpha}$ that
$$
PB^{-\alpha}P \geq (PBP)^{-\alpha}
$$
with equality iff $\ran P$ reduces $B$. This immediately implies \eqref{eq:traceineq1} and the equality statement. Now assume that $\alpha>1$. Then Lemma \ref{song} with $\phi(E)=-E^{-1/\alpha}$ yields
$$
PBP \geq (PB^{-\alpha} P)^{-1/\alpha}
$$
with equality iff $\ran P$ reduces $B^{-\alpha}$. Since $E\mapsto E^{-\alpha}$ is strictly monotone decreasing, we obtain again \eqref{eq:traceineq1} and, using the spectral theorem, the equality statement.
\end{proof}


\subsection{Comparison with a fractional power of the Dirichlet Laplacian}

It is well-known that the Dirichlet Laplacian $-\Delta_\Omega$ on $\Omega$ satisfies
\begin{equation*}
\tr\left( -h^2\Delta_\Omega -1 \right)_- \, =  \, L_{1,d}^{(1)} \, |\Omega| \,
h^{-d} - L_{1,d}^{(2)} \, |\partial \Omega|
\, h^{-d+1} + o(h^{-d+1}) \,,
\end{equation*}
see, e.g., \cite{FraGei11a} for a proof under the sole assumption that $\partial \Omega \in C^{1,\alpha}$ for some $0 < \alpha \leq 1$. Here
$$
L_{1,d}^{(1)} \, = \, \frac{1}{(2\pi)^d} \int_{\R^d} \lk |p|^{2}-1 \rk_- \, dp
$$
and, by an argument similar to that in our Proposition \ref{const}, one can bring the second constant in the form
$$
L_{1,d}^{(2)} \, = \, \frac{|\Sph^{d-2}|}{(2\pi)^{d-1}} \frac{2}{(d-1)(d+1)} \, \tr \left[ \chi B^{-(d-1)/2} \chi - (B^+)^{-(d-1)/2} \right]
$$
where $B=-\frac{d^2}{dt^2}+1$ in $L^2(\R)$ and $B^+=-\frac{d^2}{dt^2}+1$ with Dirichlet boundary conditions in $L^2(\R_+)$. A short computation, using the fact that
$$
(E^s-1)_- = s(1-s) \int_0^1 (E-\tau)_- \tau^{s-2} \,d\tau + s (E-1)_- \,,
$$
gives
\begin{align*}
\tr\left( \left(-h^2\Delta_\Omega\right)^s -1 \right)_- \, & =  \, L_{1,d}^{(1)} \, |\Omega| \, h^{-d} \, s \left( (1-s)\int_0^1 \tau^{d/2+s-1}\,d\tau + 1 \right) \\
& \qquad - L_{1,d}^{(2)} \, |\partial \Omega| \, h^{-d+1} \, s \left( (1-s)\int_0^1 \tau^{(d-1)/2+s-1}\,d\tau + 1 \right) + o(h^{-d+1}) \\
& =  \, L_{s,d}^{(1)} \, |\Omega| \, h^{-d} - \frac{s(d+1)}{d-1+2s} \, L_{1,d}^{(2)} \, |\partial \Omega| \, h^{-d+1} + o(h^{-d+1}) \,,
\end{align*}
that is,
$$
\Lzt = \frac{s(d+1)}{d-1+2s} L_{1,d}^{(2)} = \frac{|\Sph^{d-2}|}{(2\pi)^{d-1}} \frac{2s}{(d-1)(d-1+2s)} \, \tr \left[ \chi B^{-(d-1)/2} \chi - (B^+)^{-(d-1)/2} \right] \,.
$$
Since
$$
B^{-(d-1)/2} (t,t) = \frac1{2\pi} \int_\R \frac1{(1+p^2)^{(d-1)/2}} \,dp = A^{-(d-1)/2s}(t,t) 
$$
we find that
$$
\Lzt - \Lz = \frac{|\Sph^{d-2}|}{(2\pi)^{d-1}} \frac{2s}{(d-1)(d-1+2s)} \, \tr \left[ (A^+)^{-(d-1)/2s} - (B^+)^{-(d-1)/2} \right] \,.
$$
We now apply Lemma \ref{song} with $B = -d^2/dt^2+1$ in $L^2(\R)$, with $P$ being the projection onto $L^2(\R_+)$ and with $\phi(E)=E^s$. Then $\phi(PBP)= (B^+)^s$ and $P\phi(B)P=A^+$, and therefore \eqref{eq:song} yields
$$
(B^+)^s \geq A^+ \,.
$$
Since  $E\mapsto E^{-(d-1)/2s}$ is strictly monotone and since the operators $A^+$ and $(B^+)^s$ are not identical, we conclude that
$$
\tr \left[ (A^+)^{-(d-1)/2s} - (B^+)^{-(d-1)/2} \right] > 0 \, .
$$
This shows that $\Lzt - \Lz>0$ and completes the proof of Proposition \ref{comp}.
\qed


\appendix

\section{Equivalence of \eqref{eq:mainintro} and \eqref{eq:mainintroramp}}

For the sake of completeness we include a short proof of

\begin{lemma}
 \label{asymptequiv}
Let $(\lambda_k)_{k \in \N}$ be a non-decreasing sequence of real numbers and
let $A,C > 0$, $B,D \in \R$ and $-1<a-1<b<a$ be related by
$$
C = A^{-1/a} a (a+1)^{-(1+a)/a}\,,
\qquad 
D = B (A(a+1))^{-(1+b)/a} \,.
$$
Then the asymptotic formula
\begin{equation}
\label{eq:summe1}
\sum_{k=1}^N \lambda_k = A N^{a+1} + B N^{b+1} (1+o(1)) \, , \quad N \to \infty
\, ,
\end{equation}
is equivalent to 
\begin{equation}
\label{eq:summe2}
\sum_{k \in \N} (\Lambda - \lambda_k)_+ = C \Lambda^{(1+a)/a} - D
\Lambda^{(1+b)/a} (1+o(1)) \, , \quad \Lambda \to \infty \, .
\end{equation}
\end{lemma}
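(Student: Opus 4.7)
The plan is to recognize $S$ as the Legendre--Fenchel transform of $F$ and transfer two-term asymptotics across the transform via a sandwich with exact polynomials.

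Extend $F(N):=\sum_{k=1}^{N}\lambda_{k}$ to a convex, piecewise-linear function on $[0,\infty)$ by linear interpolation; this is possible because $(\lambda_{k})$ is nondecreasing. The key identity is
\[
S(\Lambda)\,:=\,\sum_{k}(\Lambda-\lambda_{k})_{+}\,=\,\sup_{N\geq 0}\bigl(\Lambda N-F(N)\bigr)\,=\,F^{*}(\Lambda),
\]
with the supremum attained at the integer $N(\Lambda):=\#\{k:\lambda_{k}<\Lambda\}$, so that $S(\Lambda)=\Lambda N(\Lambda)-F(N(\Lambda))$ exactly. By Fenchel--Moreau involutivity we also have $F=S^{*}$, so the two directions of the lemma are structurally identical.

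The heart of the proof is a stability computation for the exact polynomial $P_{B'}(N):=AN^{a+1}+B'N^{b+1}$, $B'\in\mathbb{R}$. With $\alpha:=(A(a+1))^{-1/a}$, the defining relation $A(a+1)\alpha^{a}=1$ makes $N_{0}:=\alpha\Lambda^{1/a}$ a critical point of $\Lambda N-AN^{a+1}$. Evaluating $\Lambda N_{0}-P_{B'}(N_{0})$ directly gives
\[
(\alpha-A\alpha^{a+1})\Lambda^{(a+1)/a}-B'\alpha^{b+1}\Lambda^{(b+1)/a}\,=\,C\Lambda^{(a+1)/a}-B'(A(a+1))^{-(b+1)/a}\Lambda^{(b+1)/a},
\]
and an envelope-type argument shows that shifting $N_{0}$ to the true maximizer of $\Lambda N-P_{B'}(N)$ perturbs the value only by $O(\Lambda^{(2b-a+1)/a})=o(\Lambda^{(b+1)/a})$ (since $b<a$). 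Hence $P_{B'}^{*}(\Lambda)=C\Lambda^{(a+1)/a}-B'(A(a+1))^{-(b+1)/a}\Lambda^{(b+1)/a}(1+o(1))$, which recovers the constants $C$ and $D$ of the lemma when $B'=B$.

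For $\eqref{eq:summe1}\Rightarrow\eqref{eq:summe2}$, I would, given $\varepsilon>0$, fix $N_{\varepsilon}$ with $P_{B-\varepsilon}(N)\leq F(N)\leq P_{B+\varepsilon}(N)$ for $N\geq N_{\varepsilon}$. Since for large $\Lambda$ the maximizer in $F^{*}$ lies well above $N_{\varepsilon}$, the supremum over $N<N_{\varepsilon}$ is $O(\Lambda)$ and negligible, yielding the sandwich $P_{B+\varepsilon}^{*}(\Lambda)\leq S(\Lambda)\leq P_{B-\varepsilon}^{*}(\Lambda)$ for large $\Lambda$. Inserting the stability formula, dividing by $\Lambda^{(b+1)/a}$, and sending first $\Lambda\to\infty$ and then $\varepsilon\to 0$ gives \eqref{eq:summe2}. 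The converse is symmetric via $F=S^{*}$ and the analogous sandwich on $S$. The main technical point is the stability computation for $P_{B'}^{*}$: the envelope cancellation at $N_{0}$ (encoded in $A(a+1)\alpha^{a}=1$) is precisely what prevents uncertainty in the true maximizer's location from contaminating the $\Lambda^{(b+1)/a}$ subleading term, which is why the Legendre duality transfers two-term asymptotics cleanly without needing a refined Tauberian argument.
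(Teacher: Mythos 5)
Your proof is correct and takes a genuinely different, though closely related, route from the paper. The paper sandwiches the \emph{sequence} $\lambda_k$ between the explicit model sequences $\beta_k^\pm=A(a+1)k^a+(B\pm\epsilon)(b+1)k^b$, invokes the Hardy--Littlewood--P\'olya majorization theorem to pass from control of partial sums to control of $\sum_k(\Lambda-\cdot)_+$, and then evaluates $\sum_{k=1}^N\beta_k^\pm$ and $\sum_k(\Lambda-\beta_k^\pm)_+$ directly as Riemann sums. You instead sandwich the \emph{cumulative function} $F(N)$ between smooth comparison polynomials $P_{B\pm\varepsilon}$, identify $S=F^*$ and $F=S^*$ as Legendre--Fenchel conjugates, and evaluate $P_{B'}^*$ by a stability analysis of the maximizer. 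The two approaches are really dual presentations of the same idea: the majorization equivalence the paper cites from Marshall--Olkin is precisely the order-reversing property of the Legendre transform applied to piecewise-linear interpolants of the partial sums, and both proofs rely on the same separation of exponents $b<a$ (your stability exponent $(2b-a+1)/a<(1+b)/a$ corresponds to the paper's claim that the subleading term survives the discrete-to-continuous passage). Your framing is arguably more conceptual, as it makes the involutivity of the transform do the work of the converse direction automatically, and the envelope/criticality computation explains structurally why the subleading coefficient is not polluted. The paper's argument is more elementary and self-contained in that it never needs convex duality, only the majorization lemma plus a direct computation. One small point worth being explicit about: the negligibility of the supremum over $N<N_\varepsilon$ uses $a>0$ (i.e.\ $(1+a)/a>1$) for the leading term and, more delicately, $b>a-1$ (i.e.\ $(1+b)/a>1$) so that the $O(\Lambda)$ contribution is also below the \emph{subleading} order; both are guaranteed by the stated hypothesis $-1<a-1<b<a$, but the latter is the one that is actually needed and is easy to overlook.
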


\begin{proof}
This lemma is a consequence of Hardy, Littlewood and Polya's majorization
theorem, which says that for any non-decreasing sequences $\{a_k\}$ and
$\{b_k\}$
\begin{equation}
 \label{eq:maj}
\sum_{k=1}^N a_k \leq \sum_{k=1}^N b_k
\qquad \text{for all}\ N\in\N
\end{equation}
is equivalent to
$$
\sum_{k=1}^\infty (\Lambda-a_k)_+ \geq \sum_{k=1}^\infty (\Lambda - b_k)_+
\qquad \text{for all}\ \Lambda\in\R \,;
$$
see, e.g., \cite[Prop. 4.B.4]{MarOlk79}. As usual, we will denote property
\eqref{eq:maj} by $\{a_k\}\prec \{b_k\}$.

We fix $\epsilon > 0$ and set $\beta^{\pm}_k =  A (a+1) k^a + (B\pm \epsilon)
(b+1)k^b$.
Note that the assumptions on  $a$ and $b$ imply
\begin{align}\label{eq:discas}
\sum_{k=1}^N \beta^\pm_k = AN^{a+1} + (B\pm \epsilon) N^{b+1} (1+o(1)) \, ,
\quad N \to \infty \,,
\end{align}
and
\begin{align}
\label{eq:contas}
\sum_{k \in \N} (\Lambda - \beta^\pm_k)_+ = \frac{aA}{(A(a+1))^{1+1/a}}
\Lambda^{(1+a)/a} - \frac{B\pm \epsilon}{(A(a+1))^{(1+b)/a}} \Lambda^{(1+b)/a}
(1+o(1)) \, ,
\quad \Lambda \to \infty \,.
\end{align}

First, we assume that \eqref{eq:summe1} holds. Then, by \eqref{eq:summe1} and
\eqref{eq:discas} there is an $N_\epsilon \in \N$ such that for all $N \geq
N_\epsilon$
$$
\sum_{k=1}^N \beta^-_k \leq  \sum_{k=1}^N \lambda_k \leq \sum_{k=1}^N \beta^+_k
\, .
$$
We put $\alpha_k^\pm= \beta_k^\pm$ for $k \geq N_\epsilon$ and $\alpha_k^+ =
\max (\beta^+_k,\lambda_k)$, $\alpha_k^- = \min (\beta^-_k,\lambda_k)$ for $k <
N_\epsilon$. Thus $\{\alpha^-_k\}\prec\{\lambda_k\}\prec\{\alpha^+_k\}$, and
therefore
\begin{equation*}
\sum_{k \in \N} (\Lambda - \alpha^+_k)_+ \geq \sum_{k \in \N} (\Lambda -
\lambda_k)_+  \geq \sum_{k \in \N} (\Lambda - \alpha^-_k)_+ 
\quad\text{for all}\ \Lambda \in \R \,.
\end{equation*}
Since $\sum_{k \in \N} (\Lambda - \alpha^\pm_k)_+  = \sum_{k \in \N} (\Lambda -
\beta^\pm_k)_+ + O(1)$, the assertion \eqref{eq:summe2} follows from
\eqref{eq:contas}. The converse implication is proved similarly.
\end{proof}


\section{The one-dimensional model operator}

Here we outline the calculations that are necessary to complete the analysis of the model operator $A^+$ introduced in Section \ref{sec:half}. The results depend on the following spectral representation of the operator $A^+$ found in \cite{Kwasni10a}.

\begin{theorem}
\label{thm:kwa}
For $E > 0$ let
$$
\psi(E) = (E+1)^s -1
$$
and for $\lambda > 0$ put $\gamma_\lambda(\xi) = 0$ if $0 < \xi < 1$ and
\begin{align*}
\gamma_\lambda(\xi) \, = \, &\frac 1\pi \frac{\lambda \, \psi'(\lambda^2) \, \sin(\pi s) \, (\xi^2-1)^s}{\psi(\lambda^2)^2+(\xi^2-1)^s-2\psi(\lambda^2)(\xi^2-1)\cos(\pi s)} \\
& \times \exp \lk - \frac 1\pi \int_0^\infty \frac{\xi}{\xi^2 + \zeta^2} \ln \frac{\psi'(\lambda^2)(\lambda^2-\zeta^2)}{\psi(\lambda^2)-\psi(\zeta^2)} d\zeta \rk 
\end{align*}
if $\xi \geq 1$. Moreover, define a phase-shift
\begin{equation}
\label{eq:theta}
\vartheta_\lambda \, = \, \frac 1\pi \int_0^\infty \frac{\lambda}{\zeta^2-\lambda^2} \ln \frac{\psi'(\lambda^2)(\lambda^2-\zeta^2)}{\psi(\lambda^2)-\psi(\zeta^2)} \, d\zeta
\end{equation}
and functions
\begin{equation}
\label{eq:ap:eig}
F_\lambda(x) \, = \, \sin \lk \lambda x +\vartheta_\lambda \rk + \int_0^\infty e^{-x\xi} \, \gamma_\lambda (\xi) \, d\xi \, , \quad x > 0 \, .
\end{equation}
Then
$$
\Phi f(\lambda) \, = \, \sqrt{ \frac 2\pi} \int_0^\infty f(x) \, F_\lambda(x) \, dx
$$
defines a unitary operator from $L^2(\R_+)$ to $L^2(\R_+)$.

This operator diagonalizes $A^+$ in the sense that a function $f\in L^2(\R_+)$ is in the domain of $A^+$ if and only if $(\lambda^2+1)^s \Phi f (\lambda)$ is in $L^2(\R_+)$, and in this case
$$
\Phi A^+ f(\lambda) \, = \, (\lambda^2+1)^s \Phi f(\lambda) \, .
$$
\end{theorem}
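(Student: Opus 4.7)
Since this result is attributed to Kwa\'snicki, I only sketch the strategy one would pursue. The plan is to construct the spectral representation via a Wiener--Hopf factorization of the symbol $\psi(\xi^2) = (\xi^2+1)^s - 1$ of the operator $A-1$, exploiting the fact that $\psi$ is a complete Bernstein function and therefore admits an integral representation adapted to factorization in upper and lower half-planes.

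First, I would work with the whole-line operator $A$, which is diagonalized by Fourier transform, and write the resolvent kernel of $A^+$ for $z \notin [1,\infty)$ via an integral equation (of Wiener--Hopf type) relating it to the whole-line resolvent and a boundary term arising from restriction to $\R_+$. The key technical step is to establish the factorization
\[
\psi(\xi^2) - \psi(\lambda^2) \, = \, \phi_+(\xi,\lambda)\,\phi_-(\xi,\lambda),
\]
where $\phi_\pm(\cdot,\lambda)$ are holomorphic and have prescribed asymptotics in the upper, respectively lower, half of the complex $\xi$-plane. Since $\psi$ is complete Bernstein, the logarithm of the quotient $\psi'(\lambda^2)(\lambda^2-\xi^2)/(\psi(\lambda^2)-\psi(\xi^2))$ is a Pick (Herglotz) function on $\R$, and $\phi_\pm$ can be obtained by the Cauchy--Riesz formula applied to this logarithm. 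The phase shift $\vartheta_\lambda$ in \eqref{eq:theta} is then exactly the boundary value on the real axis of this Cauchy integral, while the density $\gamma_\lambda(\xi)$ supported on $\xi\geq 1$ records the jump of $\phi_\pm$ across the branch cut of $\psi$ on the imaginary axis (the condition $\xi\geq 1$ reflects that the cut begins at $\xi^2 = -1$).

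Next, I would define the candidate generalized eigenfunctions $F_\lambda$ as in \eqref{eq:ap:eig}, combining the oscillatory part (with the Wiener--Hopf phase shift) and the exponentially decaying part from the branch cut. To verify that $A^+ F_\lambda = (\lambda^2+1)^s F_\lambda$ as a distributional eigenfunction equation on $\R_+$, one extends $F_\lambda$ by zero to $\R^-$ and tests against $(A - (\lambda^2+1)^s)\varphi$ for $\varphi \in C_0^\infty(\R_+)$; the factorization is precisely what makes the boundary contribution vanish. Finally, unitarity of the transform
\[
\Phi f(\lambda) \, = \, \sqrt{\tfrac{2}{\pi}} \int_0^\infty f(x)\,F_\lambda(x)\,dx
\]
follows from a Plancherel-type computation: one proves $\|\Phi f\|_2 = \|f\|_2$ by contour deformation, using the $L^2$-isometry of the Fourier transform on $\R$ together with the modulus-$1$ property $|\phi_+(\xi,\lambda)/\phi_-(\xi,\lambda)|=1$ for real $\xi$, and completeness by a density argument (e.g.\ showing $\Phi^*\Phi = I$ on $C_0^\infty(\R_+)$).

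The main obstacle will be the Wiener--Hopf factorization itself, i.e., obtaining \emph{explicit} analytic control of $\phi_\pm$ sufficient both to identify the boundary data with \eqref{eq:theta} and the density formula for $\gamma_\lambda$, and to carry out the Plancherel argument. In particular, verifying that the exponential integral in \eqref{eq:ap:eig} actually converges and decays appropriately as $x\to\infty$, and controlling its contribution in the $L^2$-isometry computation (which requires sharp estimates on $\gamma_\lambda$ as $\xi\to 1^+$ and $\xi\to\infty$), are the delicate analytic points. The general framework of subordinate Brownian motion and the machinery developed by Kwa\'snicki for operators $\psi(-d^2/dt^2)$ on the half-line provide the right language in which these estimates become manageable.
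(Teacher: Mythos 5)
The paper does not prove Theorem~\ref{thm:kwa} at all: it states the result and attributes it directly to Kwa\'snicki's paper \cite{Kwasni10a}, and then immediately uses it as a black box. So there is no ``paper's own proof'' to compare your sketch against. That said, your strategy sketch accurately reflects the Wiener--Hopf machinery that underlies the cited reference: factorizing the symbol $\psi(\xi^2)-\psi(\lambda^2)$ using that $\psi$ is a complete Bernstein function, identifying the phase shift $\vartheta_\lambda$ with the boundary value of the associated Cauchy integral, and reading off $\gamma_\lambda$ from the jump across the branch cut (which starts at $\xi^2=-1$, i.e.\ $\xi\geq 1$ in the Laplace variable). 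You also correctly flag the genuinely delicate points---convergence and decay of the Laplace-transform part of $F_\lambda$, and the Plancherel computation needed for unitarity and completeness---which are exactly the technical issues handled in \cite{Kwasni10a}. Given that both you and the paper defer to the same reference, there is no mismatch to report.
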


According to \cite{Kwasni10a} the Laplace transform of $\gamma_\lambda$ is a completely monotone function bounded by one. From \eqref{eq:ap:eig} it follows that for all $t \geq 0$
\begin{equation}
\label{eq:ap:bound}
|F_\lambda(t)| \leq 2 \,  .
\end{equation}

Theorem \ref{thm:kwa} states that the functions $F_\lambda$ are generalized eigenfunctions of the operator $A^+$. Hence, we can write
\begin{equation}
\label{eq:ap:rep}
e^+(t,u,\mu)  \, = \, \frac 2\pi \int_0^\infty \lk (\lambda^2+1)^s-\mu \rk_-^0
F_\lambda(t) F_\lambda(u) \, d\lambda \, .
\end{equation}
{F}rom \eqref{eq:K}, \eqref{eq:mod:lz}, and Proposition \ref{const} it follows that
\begin{equation}
\label{eq:ap:lz}
\Lz \, = \, \frac{4s}{(d-1+2s)(d-1)} \frac{|\Sph^{d-2}|}{(2\pi)^d} \int_0^\infty \int_0^\infty \lk 1-2F_\lambda^2(t) \rk \lk \lambda^2+1\rk^{-(d-1)/2} d\lambda \, dt \, .
\end{equation}


\subsection{Proof of Lemma \ref{eest}}
\label{ap:eest}
Lemma \ref{eest} is an immediate consequence of \eqref{eq:ap:rep}. In view of \eqref{eq:ap:bound} we estimate 
$$
\left| e^+(t,u,\mu) \right| \, \leq \, C \int_0^{(\mu^{1/s}-1)_+^{1/2}} d\lambda \, \leq \, C \mu^{1/(2s)} \, .
$$
This proves the lemma.


\subsection{Proof of Lemma \ref{aest}}
\label{ap:aest}

First we need the following technical result about $\vartheta_\lambda$.

\begin{lemma}
\label{lem:ap:rem1}
The phase-shift $\vartheta_\lambda$ is monotone increasing and twice differentiable in $\lambda > 0$. It satisfies
$$
\vartheta_0 \, = \, 0 \qquad \textnormal{and} \qquad \vartheta_\lambda \to \frac \pi4 (1-s) \quad \textnormal{as} \quad \lambda \to \infty \, .
$$
The first and second derivatives are bounded and one has, as $\lambda \to \infty$,
$$
\frac{d\vartheta_\lambda}{d\lambda} \, = \, \frac{d^2\vartheta_\lambda}{d\lambda^2} \, = \, O \lk \frac 1\lambda \rk  \,.
$$
\end{lemma}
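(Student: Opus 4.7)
The plan is to first establish well-definedness of the integral defining $\vartheta_\lambda$, then treat $\vartheta_0=0$ and the limit as $\lambda\to\infty$, and finally analyze differentiability, derivative bounds, and monotonicity. The integrand has only a removable singularity at $\zeta=\lambda$: expanding $\psi(\zeta^2)=\psi(\lambda^2)+\psi'(\lambda^2)(\zeta^2-\lambda^2)+O((\zeta^2-\lambda^2)^2)$ shows that the argument of the logarithm equals $1+O(\zeta^2-\lambda^2)$, so the logarithm vanishes to first order there and cancels the pole coming from $(\zeta^2-\lambda^2)^{-1}$. As $\zeta\to 0$ the logarithm has the finite limit $\ln(\psi'(\lambda^2)\lambda^2/\psi(\lambda^2))$, and as $\zeta\to\infty$, since $\psi(\zeta^2)\sim\zeta^{2s}$, the logarithm grows only like $\ln\zeta$ while the prefactor decays like $\zeta^{-2}$, so the integral converges absolutely. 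Dominated convergence then gives $\vartheta_0=0$, as the explicit $\lambda$-factor in the numerator forces the integrand to $0$ pointwise.

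For the asymptotic as $\lambda\to\infty$ I would substitute $\zeta=\lambda t$ to obtain
$$
\vartheta_\lambda=\frac{1}{\pi}\int_0^\infty\frac{1}{t^2-1}\ln\frac{\psi'(\lambda^2)\lambda^2(1-t^2)}{\psi(\lambda^2)-\psi(\lambda^2 t^2)}\,dt.
$$
Since $\psi(E)\sim E^s$ and $\psi'(E)\sim sE^{s-1}$ at infinity, the integrand converges pointwise to $(t^2-1)^{-1}\ln\bigl(s(1-t^2)/(1-t^{2s})\bigr)$, and a uniform integrable dominant follows from monotonicity of $E\mapsto E\psi'(E)/\psi(E)$. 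It remains to evaluate
$$
I=\int_0^\infty \frac{1}{t^2-1}\ln \frac{s(1-t^2)}{1-t^{2s}}\,dt.
$$
The change of variables $t\mapsto 1/t$ transforms $I$ into $-I-(2s-2)J$, where $J=\int_0^\infty (\ln t)/(t^2-1)\,dt=\pi^2/4$ is classical (computed by summing $(2n+1)^{-2}$); hence $I=(1-s)\pi^2/4$ and $\vartheta_\lambda\to(1-s)\pi/4$ as $\lambda\to\infty$.

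The smoothness of $\vartheta_\lambda$ in $\lambda>0$ and the stated derivative bounds follow by differentiating under the integral after the substitution $\zeta=\lambda t$. Using the explicit formulas $\psi'(E)=s(E+1)^{s-1}$ and $\psi''(E)=s(s-1)(E+1)^{s-2}$, one checks that each $\lambda$-derivative of the integrand brings out an extra factor of $\lambda^{-1}$ times an expression in $\psi',\psi''$ evaluated at $\lambda^2$ and $\lambda^2 t^2$ which is $t$-integrable against $(t^2-1)^{-1}$ uniformly in large $\lambda$, giving both boundedness on $(0,\infty)$ and the $O(1/\lambda)$ asymptotic for the first two derivatives. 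Monotonicity of $\vartheta_\lambda$ is the most delicate point; the cleanest route is to invoke the interpretation of $\vartheta_\lambda$ from Kwa\'snicki \cite{Kwasni10a} as the boundary phase of a Herglotz function naturally associated with $A^+$, for which monotonicity is automatic. The main obstacle I anticipate is justifying dominated convergence and differentiation under the integral with uniform control near the removable singularity at $t=1$ and at the tail $t\to\infty$, which requires care because both the numerator and denominator of the logarithm vanish simultaneously at $t=1$.
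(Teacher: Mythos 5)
Your evaluation of the $\lambda\to\infty$ limit is correct, though it proceeds by a slightly different route than the paper's proof. You substitute $\zeta=\lambda t$ to obtain an integral over $t\in(0,\infty)$ and then use the reflection $t\mapsto 1/t$ together with $\int_0^\infty(\ln t)/(t^2-1)\,dt=\pi^2/4$ to evaluate the limit $I=(1-s)\pi^2/4$. The paper instead performs the fold $\zeta=\lambda z$ on $(0,\lambda)$ and $\zeta=\lambda/z$ on $(\lambda,\infty)$ directly inside the definition of $\vartheta_\lambda$, arriving at
$$
\vartheta_\lambda \, = \, \frac1\pi\int_0^1\frac{1}{1-z^2}\,\ln\!\left(\frac{1}{z^2}\,\frac{\psi(\lambda^2)-\psi(\lambda^2 z^2)}{\psi(\lambda^2/z^2)-\psi(\lambda^2)}\right)dz \,,
$$
whose integrand is nonnegative and, for each fixed $z\in(0,1)$, increases pointwise in $\lambda$ from $0$ (at $\lambda=0$) to $(2s-2)(\ln z)/(1-z^2)$. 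That symmetrized form buys three things at once: a free dominant for both the $\lambda\to0$ and $\lambda\to\infty$ limits, reduction of the limit to the standard integral $\int_0^1(\ln z)/(1-z^2)\,dz=-\pi^2/8$, and, crucially, monotonicity of $\vartheta_\lambda$.

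That last point is where your proposal has a genuine gap. Monotonicity of $\vartheta_\lambda$ is a claimed conclusion of the lemma, and you do not prove it: you defer to "the interpretation of $\vartheta_\lambda$ from Kwa\'snicki as the boundary phase of a Herglotz function," without developing that interpretation or explaining why it would yield monotone dependence on $\lambda$. The paper's argument is elementary and concrete: the quantity inside the logarithm in the folded representation is increasing in $\lambda$ for each fixed $z$, and since $(1-z^2)^{-1}>0$ the whole integrand, and hence $\vartheta_\lambda$, increases with $\lambda$. You should supply an argument of this kind rather than an undeveloped appeal to Herglotz theory. On the derivative bounds, your sketch is at roughly the same level of detail as the paper (which also says "similarly, by explicit calculations"); the one piece the paper makes explicit that your version lacks is the $\lambda$-uniform integrable dominant $|\partial_\lambda b_\lambda(\zeta)|\leq \partial_\lambda b_\lambda(\zeta)\big|_{\lambda=0}=\zeta^{-2}\ln\!\big(s\zeta^2/((1+\zeta^2)^s-1)\big)$, which you would need to produce in your own variables in order to justify differentiating under the integral and to get the $O(1/\lambda)$ decay.
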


\begin{proof}
Following \cite{Kwasni10a}, we substitute $\zeta = \lambda z$ for $\zeta \in (0,\lambda)$ and $\zeta = \lambda / z$ for $\zeta \in (\lambda,\infty)$ in the definition of $\vartheta_\lambda$ and obtain
$$
\vartheta_\lambda \, = \, \frac 1\pi \int_0^1 \frac{1}{1-z^2} \ln \lk \frac{1}{z^2} \frac{\psi(\lambda^2)- \psi(\lambda^2 z^2)}{\psi(\lambda^2/z^2) - \psi(\lambda^2)} \rk dz.
$$
Note that the function 
$$
\frac{1}{z^2} \frac{\psi(\lambda^2)- \psi(\lambda^2 z^2)}{\psi(\lambda^2/z^2) - \psi(\lambda^2)} \, = \, \frac{1}{z^2} \frac{(1+\lambda^2)^s - (1+\lambda^2 z^2)^s}{(1+\lambda^2/z^2)^s - (1+\lambda^2)^s}
$$
equals $1$ for $\lambda =0$ and that for all $z \in (0,1)$ it is increasing in $\lambda > 0$ and tends to $z^{2s-2}$ as $\lambda$ tends to infinity.
By Lebesgue's dominated convergence we find $\vartheta_0 = 0$ and 
$$
\lim_{\lambda \to \infty} \vartheta_\lambda \, = \, \frac 1\pi \int_0^1 \frac{1}{1-z^2} \ln(z^{2s-2}) \, dz \, = \, \frac \pi4 (1-s) \, .
$$

By \eqref{eq:theta}, we also have
$$
\vartheta_\lambda \, = \, \frac 1\pi \int_0^\infty b_\lambda(\zeta) \, d\zeta \, 
$$
with
$$
b_\lambda(\zeta) \, = \, \frac{\lambda}{\zeta^2-\lambda^2} \, \ln \lk \frac{s(1+\lambda^2)^{s-1} (\lambda^2-\zeta^2)}{(\lambda^2+1)^s-(\zeta^2+1)^s} \rk \, .
$$
We remark that 
$$
\left| \partial_\lambda b_\lambda(\zeta) \right| \, \leq \, \left. \partial_\lambda b_\lambda(\zeta) \right|_{\lambda=0} \, = \, \frac{1}{\zeta^2} \ln \lk \frac{s \zeta^2}{(1+\zeta^2)^s-1} \rk \, .
$$ 
for all $\zeta \in (0,\infty)$.
Since the last expression is integrable in $\zeta \in (0,\infty)$ it follows that
$$
\frac{d\vartheta_\lambda}{d\lambda} \, = \, \frac{1}{\pi} \int_0^\infty \partial_\lambda b_\lambda (\zeta) \, d\zeta
$$
is bounded and, in particular, we obtain
\begin{equation}
\label{eq:ap:dtheta}
\left. \frac{d\vartheta_\lambda}{d\lambda} \right|_{\lambda=0} \, = \, \frac{1}{\pi} \int_0^\infty \frac{1}{\zeta^2} \ln \lk \frac{s \zeta^2}{(1+\zeta^2)^s-1} \rk d\zeta \, .
\end{equation}
Similarly, we can show existence and boundedness of the second derivative and decay of the derivatives  as $\lambda \to \infty$ by explicit calculations and Lebesgue's dominated convergence.
\end{proof}

To simplify notation we put
$$
\psi_\lambda(E) \, = \, \frac{1-E/\lambda^2}{1-\psi(E)/\psi(\lambda^2)}
$$
for $E > 0$. Moreover, we write $G_\lambda$ for the Laplace transform of $\gamma_\lambda$ and $g_\lambda$ for the Laplace transform of $G_\lambda$. According to \cite{Kwasni10a} we have 
\begin{equation}
\label{eq:ap:laplaceg}
g_\lambda(t) \, =  \, \frac{\lambda \cos \vartheta_\lambda + t \sin \vartheta_\lambda}{\lambda^2+t^2} - \lambda^2 \sqrt{\frac{\psi'(\lambda^2)}{\psi(\lambda^2)}} \frac{\varphi_\lambda(t)}{\lambda^2+t^2} \, , \quad t > 0 \, ,
\end{equation}
with
$$
\varphi_\lambda(t) \, = \, \exp \lk \frac 1\pi \int_0^\infty \frac{t}{t^2+\zeta^2} \ln \lk \psi_\lambda(\zeta^2) \rk d\zeta \rk \, .
$$ 

To prove Lemma \ref{aest} we need the following properties of $\varphi_\lambda$.

\begin{lemma}
\label{lem:ap:rem2}
The function $t\mapsto \varphi_\lambda(t)$ is differentiable in $t > 0$ and its
derivative satisfies
\begin{align*}
\varphi'_\lambda(0) \, &= \, o(1) \quad \textnormal{as} \ \lambda \to \infty
\,, \\
\varphi'_\lambda(0) \, &= \, \left. \frac{d\vartheta_\lambda}{d\lambda}
\right|_{\lambda=0} + O(\lambda)  \quad \textnormal{as} \ \lambda \to 0 \,.
\end{align*}
\end{lemma}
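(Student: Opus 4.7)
My approach begins with the identity $\varphi_\lambda(0)=1$, which holds because the integrand of the exponent in the definition of $\varphi_\lambda$ vanishes at $t=0$. Differentiating under the integral in $t$ and evaluating at $t=0$ yields
\[
\varphi_\lambda'(0) = \frac{1}{\pi}\int_0^\infty \frac{1}{\zeta^2}\ln\psi_\lambda(\zeta^2)\,d\zeta.
\]
Differentiability in $t$ and convergence of this integral rest on control of $\ln\psi_\lambda(\zeta^2)$: near $\zeta=0$ a Taylor expansion $\psi_\lambda(\zeta^2)=1+O(\zeta^2)$ (since $\psi_\lambda(0)=1$) makes $\zeta^{-2}\ln\psi_\lambda(\zeta^2)$ bounded, while for large $\zeta$ the explicit quotient
\[
\psi_\lambda(\zeta^2)=\frac{1-\zeta^2/\lambda^2}{1-\psi(\zeta^2)/\psi(\lambda^2)}
\]
grows only polynomially, like $\zeta^{2-2s}$, so $\zeta^{-2}\ln\psi_\lambda(\zeta^2)$ is integrable at infinity.

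For the asymptotics as $\lambda\to\infty$, I would observe that for each fixed $\zeta>0$ both $\zeta^2/\lambda^2\to 0$ and $\psi(\zeta^2)/\psi(\lambda^2)\to 0$, so $\psi_\lambda(\zeta^2)\to 1$ and $\zeta^{-2}\ln\psi_\lambda(\zeta^2)\to 0$ pointwise. The $o(1)$ conclusion then follows from Lebesgue's dominated convergence theorem, provided I can exhibit a $\lambda$-uniform integrable majorant. I plan to construct such a majorant by splitting the $\zeta$-axis into three regions ($\zeta\lesssim 1$, $1\lesssim\zeta\lesssim\lambda$, $\zeta\gtrsim\lambda$) and estimating $\psi_\lambda(\zeta^2)$ directly from the above quotient; the delicate region is the transition $\zeta\approx\lambda$, where the apparent singularity cancels by l'H\^opital and $\psi_\lambda(\zeta^2)$ stays of order one.

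For the asymptotics as $\lambda\to 0$, I would Taylor expand in $a=\lambda^2$. Using $\psi(a)/a=s-\frac{s(1-s)}{2}a+O(a^2)$ and $\psi(a)-\psi(E)=-\psi(E)+sa+O(a^2)$ and tracking the quotient gives
\[
\psi_\lambda(E)=\psi_0(E)\bigl(1+\lambda^2 g(E)+O(\lambda^4)\bigr),\qquad g(E)=\frac{s}{\psi(E)}-\frac{1}{E}-\frac{1-s}{2},
\]
with $\psi_0(E)=sE/((1+E)^s-1)$, which matches exactly the integrand in \eqref{eq:ap:dtheta} for $\frac{d\vartheta_\lambda}{d\lambda}\big|_{\lambda=0}$. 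Taking logarithms, subtracting, and integrating,
\[
\varphi_\lambda'(0)-\left.\frac{d\vartheta_\lambda}{d\lambda}\right|_{\lambda=0}=\frac{\lambda^2}{\pi}\int_0^\infty\frac{g(\zeta^2)}{\zeta^2}\,d\zeta+O(\lambda^4)=O(\lambda^2),
\]
stronger than the claimed $O(\lambda)$. That $g(\zeta^2)/\zeta^2$ is integrable on $(0,\infty)$ is checked from an expansion near $\zeta=0$ (the combination $s/\psi(\zeta^2)-1/\zeta^2$ is bounded there) and the decay of each summand at infinity.

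The main obstacle is the large-$\lambda$ estimate: producing the $\lambda$-uniform integrable dominant demands careful analysis of $\psi_\lambda(\zeta^2)$ in the transition region $\zeta\approx\lambda$, together with matching control across the different scales. The small-$\lambda$ Taylor expansion is essentially bookkeeping by contrast, but one must justify uniform-in-$\zeta$ control of the remainder in order to legitimately interchange the Taylor error with the $\zeta$-integral.
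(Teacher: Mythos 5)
Your proof follows essentially the same skeleton as the paper's (differentiation under the integral, dominated convergence for $\lambda\to\infty$, small-$\lambda$ expansion compared to \eqref{eq:ap:dtheta}), but two of the steps you flag as obstacles are handled much more economically in the paper, and the first of these is in fact a genuine gap in your write-up.

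For the large-$\lambda$ limit, you need a $\lambda$-uniform integrable majorant and you state you ``plan to construct'' one by splitting $\zeta$ into three regimes; this is the missing piece of the argument. The paper instead observes that for each fixed $\zeta>0$ the map $\lambda\mapsto\psi_\lambda(\zeta^2)$ is non-increasing and tends to $1$, so that $1\le\psi_\lambda(\zeta^2)\le\psi_0(\zeta^2)$ for all $\lambda\ge 0$, and thus $0\le\zeta^{-2}\ln\psi_\lambda(\zeta^2)\le\zeta^{-2}\ln\psi_0(\zeta^2)$, with the right-hand side already known to be integrable from the earlier analysis of $\vartheta_\lambda$. That monotonicity furnishes the dominant immediately and you should either establish it (a short computation) or carry out your region-splitting to completion; as written the $\lambda\to\infty$ bound is announced rather than proved.

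For the small-$\lambda$ expansion, your second-order Taylor analysis in $a=\lambda^2$ is a legitimate alternative route that, if the remainder is controlled uniformly in $\zeta$, would even give the sharper bound $O(\lambda^2)$. But the lemma only claims $O(\lambda)$, and the paper gets this more cheaply: having already shown that $\left.\varphi'_\lambda(0)\right|_{\lambda=0}=\left.\tfrac{d\vartheta_\lambda}{d\lambda}\right|_{\lambda=0}$ (by comparing with \eqref{eq:ap:dtheta}), they simply note that the $\lambda$-derivative of $\lambda\mapsto\varphi'_\lambda(0)$ is bounded (by another dominated-convergence argument parallel to the one used for $\vartheta_\lambda$ in Lemma~\ref{lem:ap:rem1}), yielding $O(\lambda)$ by the mean value theorem. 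Your approach requires the extra work of a uniform Taylor remainder estimate and integrability of $g(\zeta^2)/\zeta^2$, which you partially verify but do not close; since the stronger rate is unused, the paper's soft argument is preferable here.
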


\begin{proof}
For fixed $\zeta \in (0,\infty)$ the function $\lambda\mapsto
\psi_\lambda(\zeta^2)$ is non-increasing in $\lambda > 0$ and tends to $1$ as
$\lambda \to \infty$. Moreover,
$$
\frac{1}{\zeta^2}\ln\left( \psi_0(\zeta^2) \right) \, = \, \frac1{\zeta^2}
\ln\left( \frac{s \zeta^2}{(\zeta^2+1)^s-1} \right)
$$
is integrable with respect to $\zeta\in (0,\infty)$. Hence we find that
$$
\varphi'_\lambda(0) \, = \, \frac1\pi \int_0^\infty \frac{1}{\zeta^2} \ln \lk
\psi_\lambda(\zeta^2) \rk d\zeta
$$
and $\varphi'_\lambda(0) = o(1)$ as $\lambda \to \infty$ by Lebesgue's theorem.

In view of (\ref{eq:ap:dtheta})
$$ 
\left. \varphi'_\lambda(0)\right|_{\lambda=0} \, = \, \frac1\pi \int_0^\infty
\frac{1}{\zeta^2} \ln \lk \psi_0(\zeta^2) \rk d\zeta
\, = \, \left. \frac{d\vartheta_\lambda}{d\lambda}
\right|_{\lambda=0} \, .
$$
The second claim now follows from the fact that the derivative of
$\lambda\mapsto\varphi'_\lambda(0)$ is bounded.
\end{proof}

\begin{proof}[Proof of Lemma \ref{aest}]
In view of Theorem \ref{thm:kwa} we can write
$$
a(\mu) - a^+(t,\mu) \, =\, \frac 1\pi \int_0^\infty \lk (\lambda^2+1)^s - \mu
\rk_- \lk 1 - 2 F_\lambda^2(t) \rk d\lambda
$$
and by \eqref{eq:ap:eig} 
$$
1-2F_\lambda(t)^2 \, = \, \cos(2\lambda t + 2 \vartheta_\lambda ) - 4 \sin (\lambda t + \vartheta_\lambda ) \, G_\lambda(t) - 2 G_\lambda(t)^2  \, .
$$
We get
\begin{equation*}
\int_0^\infty  t^\gamma | a(\mu) - a^+(t,\mu) | dt \, \leq  \, R_1(\mu) + R_2(\mu) 
\end{equation*}
with
\begin{align*}
R_1(\mu)  &= \int_0^\infty t^\gamma \left| \int_0^{(\mu^{1/s}-1)_+^{1/2}}  \lk  \mu -(\lambda^2+1)^s \rk  \cos (2\lambda t + 2 \vartheta_\lambda) \, d\lambda \right| dt \, , \\
R_2(\mu)  &= \int_0^\infty t^\gamma \left| \int_0^{(\mu^{1/s}-1)_+^{1/2}}  \lk  \mu -(\lambda^2+1)^s \rk \lk 2 \sin (\lambda t + \vartheta_\lambda ) \, G_\lambda(t) +  G_\lambda(t)^2 \rk  d\lambda \right| dt \, .
\end{align*}

To estimate $R_1(\mu)$ we split the integration in $t$ and integrate over $t \in [0,1]$ first. We assume $0 < \gamma < 1$. The proof for $\gamma = 0$ follows similarly. 

We write 
$$
\cos (2\lambda t + 2 \vartheta_\lambda ) \, = \, \frac {1}{2t} \, \frac{d}{d\lambda} \sin (2\lambda t + 2 \vartheta_\lambda ) - \frac{\cos (2\lambda t + 2 \vartheta_\lambda )}{t} \, \frac{d \vartheta_\lambda }{d\lambda}
$$
and insert this identity in the expression for $R_1(\mu)$. After integrating by parts in the $\lambda$-integral one can estimate
$$
\int_0^1 t^\gamma \left| \int_0^{(\mu^{1/s}-1)_+^{1/2}}  \lk  \mu -(\lambda^2+1)^s \rk  \cos (2\lambda t + 2 \vartheta_\lambda) \, d\lambda \right| dt
\, \leq \, C \mu \lk (\ln \mu)^2 +1 \rk \, .
$$
To estimate the integral over $t \in [1, \infty)$ we proceed similarly. We integrate by parts twice and get
$$
\int_1^\infty t^\gamma \left| \int_0^{(\mu^{1/s}-1)_+^{1/2}}  \lk  \mu -(\lambda^2+1)^s \rk  \cos (2\lambda t + 2 \vartheta_\lambda) \, d\lambda \right| dt \, \leq \, C \mu (\ln \mu +1) \, . 
$$
We conclude
\begin{equation*}
R_1(\mu) \, \leq \, C \mu \lk (\ln \mu)^2 +1 \rk 
\end{equation*}
and turn to estimating $R_2(\mu)$.

Since $G_\lambda$ is non-negative and uniformly bounded, we have
\begin{equation}
\label{eq:ap:r2}
R_2(\mu) \, \leq  \, C \int_0^{(\mu^{1/s}-1)_+^{1/2}} \lk \mu - (\lambda^2+1)^s \rk \int_0^\infty  t^\gamma \, G_\lambda(t)  dt \, d\lambda \, .
\end{equation}
By definition, $g_\lambda(0) = \int_0^\infty G_\lambda(t)  dt$ and $g'_\lambda(0) = \int_0^\infty t  G_\lambda(t) dt$.  We note that, by \eqref{eq:ap:laplaceg},
$$
g_\lambda(0) \, = \, \frac{\cos \vartheta_\lambda}{\lambda} - \sqrt{\frac{\psi'(\lambda^2)}{\psi(\lambda^2)}} 
$$
and apply Lemma \ref{lem:ap:rem1} to estimate $\int_0^\infty G_\lambda(t)  dt  \leq  C \lk \lambda \land \lambda^{-1} \rk$. 
Moreover, by (\ref{eq:ap:laplaceg}),
$$
g'_\lambda(0) \, = \, \frac{\sin \vartheta_\lambda}{\lambda^2} - \sqrt{\frac{\psi'(\lambda^2)}{\psi(\lambda^2)}} \, \varphi_\lambda'(0) 
$$
and we apply Lemma \ref{lem:ap:rem1} and Lemma \ref{lem:ap:rem2} to estimate $\int_0^\infty t   G_\lambda(t) dt   \leq  C \lk 1 \land \lambda^{-1} \rk$. It follows that
$$
\int_0^\infty t^\gamma  \, G_\lambda(t) \, dt  \,  \leq \,  C \lk 1 \land \lambda^{-1} \rk \, .
$$
Thus, by (\ref{eq:ap:r2}), we arrive at
\begin{equation*}
R_2(\mu) \,  \leq \,  C \int_0^{(\mu^{1/s}-1)_+^{1/2}} \lk \mu - (\lambda^2+1)^s \rk \lk 1 \land \lambda^{-1} \rk \, d\lambda \, \leq \, C \, \mu \lk \ln \mu +1 \rk \, . 
\end{equation*}
This finishes the first part of the proof of Lemma \ref{aest}. 

In order to prove the assertion about $K(t)$, we bound
$$
\int_0^\infty t^\gamma \, |K(t)| \, dt \leq \int_{|\xi'|<1}
|\xi'|^{1+2s} \int_0^\infty t^\gamma | a^+(t|\xi'|,|\xi'|^{-2s}) -
a(|\xi'|^{-2s}) |\,dt \,d\xi' \,.
$$
Here we also used that, since $a(\mu)=a^+(t,\mu)=0$ for $\mu\leq 1$, we can restrict the integration in the definition of $K$ to $|\xi'|<1$. On the other hand, from \eqref{eq:aest} we know that
$$
\int_0^\infty t^\gamma | a^+(t\mu^{-1/2s},\mu) -a(\mu) |\,dt \leq C_\gamma
\mu^{1+(\gamma+1)/(2s)} \lk (\ln\mu)^2 +1 \rk \,.
$$
Combining these two bounds and using that $\gamma<1\leq d-1$ we obtain the
second part of Lemma \ref{aest}.
\end{proof}


%

\subsection{A remainder estimate}
\label{ap:B2}

The following technical lemma was needed in the proof of the upper bound near
the boundary.

\begin{lemma}\label{lem:remest}
 Assume that $\phi \in C_0^1(\R^d)$ is supported in a ball of radius $l =1$ and
that (\ref{eq:int:gradphi}) is satisfied with $l=1$. Then for any $\frac 12 -s <
\sigma < \min\{\frac 12, 1-s\}$ one has
\begin{equation}
\label{derivatives}
\int_{\R^d} \int_{\R^d} \left| ( -\Delta_{x'})^\sigma \frac{|\phi(x) -
\phi(y)|^2}{|x-y|^{d+2s}} \right| dx dy \, \leq \, C
\end{equation}
\end{lemma}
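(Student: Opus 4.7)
The plan is to replace $(-\Delta_{x'})^\sigma$ by its singular integral representation, move the absolute value inside, and bound the resulting quadruple integral by splitting the domain according to whether $|x'-\tilde x'|$ is small or large compared to $|x-y|$. Write $F(x,y) = |\phi(x)-\phi(y)|^2/|x-y|^{d+2s}$. Since $\sigma<1/2$, for a.e.\ $(x,y)$ (all those with $x_d\neq y_d$) the function $x'\mapsto F(x',x_d,y)$ is smooth on $\R^{d-1}$ with polynomial decay, so
\[
(-\Delta_{x'})^\sigma F(x,y) = c_{\sigma,d-1}\int_{\R^{d-1}} \frac{F(x,y)-F(\tilde x,y)}{|x'-\tilde x'|^{d-1+2\sigma}}\,d\tilde x' \,,
\qquad \tilde x=(\tilde x',x_d),
\]
the integral converging absolutely off the diagonal. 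Taking absolute values and applying Fubini, the target quantity is bounded by $c \iiint |F(x,y)-F(\tilde x,y)|\,|x'-\tilde x'|^{-(d-1+2\sigma)}\,d\tilde x'\,dx\,dy$.

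Set $\rho=|x-\tilde x|=|x'-\tilde x'|$ and $r=|x-y|$. I would split the region of integration into $\{\rho\le r/4\}$ and $\{\rho>r/4\}$. In the first region the segment from $x$ to $\tilde x$ stays at distance $\ge r/2$ from $y$, so $F(\cdot,y)$ is smooth there and a direct computation using $|\phi(x)-\phi(y)|\le\min(C,C|x-y|)$ gives
\[
|\nabla_{x'} F(z,y)| \le C\,|z-y|^{1-d-2s}\mathbf{1}_{\{|z-y|\le 1\}} + C\,|z-y|^{-d-2s}\mathbf{1}_{\{|z-y|>1\}},
\]
whence $|F(x,y)-F(\tilde x,y)|\le C\rho\bigl(r^{1-d-2s}\mathbf{1}_{r\le 1}+r^{-d-2s}\mathbf{1}_{r>1}\bigr)$. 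The $\tilde x'$-integral over $\{\rho\le r/4\}$ produces $\int_{|u'|\le r/4}|u'|^{2-d-2\sigma}du' = C r^{1-2\sigma}$, which is finite precisely because $\sigma<1/2$. In the second region I use the crude bound $|F(x,y)-F(\tilde x,y)|\le|F(x,y)|+|F(\tilde x,y)|$ together with $\int_{\rho>r/4}|u'|^{-(d-1+2\sigma)}du' = Cr^{-2\sigma}$; for the term $|F(\tilde x,y)|$ one renames $z=\tilde x$ and observes that the constraint $\rho>r/4$ translates, via the reverse triangle inequality, into $\rho \ge |z-y|/5$, so the same $r^{-2\sigma}$-type bound applies.

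After these reductions, everything reduces to verifying that
\[
\int\!\!\int r^{-d-2s-2\sigma}\,|\phi(x)-\phi(y)|^2\,dx\,dy
\quad\text{and}\quad
\int\!\!\int r^{1-2\sigma}\cdot(r^{1-d-2s}\mathbf{1}_{r\le1}+r^{-d-2s}\mathbf{1}_{r>1})\,dx\,dy
\]
are both finite. Using the compact support of $\phi$, the Lipschitz bound for $r\le 1$, and the bound $|\phi(x)-\phi(y)|\le C$ for $r>1$, one fixes $x$ in a bounded support set and integrates radially in $y$. Convergence at $r=0$ reduces in each case to the condition $\int_0^1 r^{1-2s-2\sigma}dr<\infty$, equivalent to $\sigma+s<1$, which is one of the hypotheses. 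Convergence at $r=\infty$ reduces to $\int_1^\infty r^{-2s-2\sigma}dr<\infty$, equivalent to $\sigma>\tfrac12-s$, which is the other hypothesis. Thus the two endpoints of the allowed range of $\sigma$ are each used exactly once, and combining the estimates establishes the claim.

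The main technical obstacle is the non-smoothness of $F$ on the diagonal $x=y$, which prevents a uniform gradient bound and forces the two-region split; the cleanest way to organize this, I believe, is the $\rho$-vs-$r$ dichotomy above, since it isolates the singular contribution into a region where only $L^1$-type control of $F$ is needed, and keeps the gradient-type estimate confined to where $F$ is actually smooth.
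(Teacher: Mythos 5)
Your overall strategy---the singular--integral representation of $(-\Delta_{x'})^\sigma$, a dichotomy between $\rho := |x'-\tilde x'|$ small versus large compared with $r := |x-y|$, a mean--value bound in the small-$\rho$ region and a crude bound plus change of variables in the large-$\rho$ region---mirrors the paper's proof for the diagonal piece $B_1\times B_1$ (the paper's first split in (\ref{fint})). But the final accounting contains a genuine gap. For the small-$\rho$ region you claim that
\[
\iint r^{1-2\sigma}\bigl(r^{1-d-2s}\mathbf 1_{r\le 1}+r^{-d-2s}\mathbf 1_{r>1}\bigr)\,dx\,dy
\]
is finite ``by fixing $x$ in a bounded support set''. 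This quantity carries no factor of $\phi$ at all---it is a function of $|x-y|$ alone---so the integral over $\R^d\times\R^d$ is infinite for every $\sigma$, and compact support cannot be invoked to rescue it: after $\tilde x'$ is integrated out, nothing in the bound vanishes when $x$ and $y$ lie outside $B_1$.

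The place where this bites is exactly the case the paper isolates by first splitting $\R^d\times\R^d$ into $B_1\times B_1$ and its complement. When $x,y\notin B_1$ the difference $|F(x,y)-F(\tilde x,y)| = F(\tilde x,y)$ is not controlled by $\rho\cdot\sup|\nabla_{x'}F|$: the factor $\rho$ can be arbitrarily large while $F(\tilde x,y)$ stays bounded by $|\tilde x-y|^{-d-2s}$ and is nonzero only for $\tilde x\in B_1$, which forces $\rho\ge|x'|-1$ and confines the direction $(\tilde x'-x')/\rho$ to a cone of aperture $\sim 1/|x'|$. Only after accounting for the angular restriction, for the fact that the portion of the segment inside $B_1$ has length at most $2$ (so the effective weight is $\min(\rho,2)$, not $\rho$), and for the constraint $r\ge 4(|x'|-1)$ coming from $\rho\le r/4$, does the integral over the exterior converge under $\sigma>\tfrac12-s$; with only the ingredients you list the integral diverges (already for $d=2$ one is left with $\int_1^\infty t^{1-2s}dt=\infty$). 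Alternatively the off-diagonal pieces can be handled without any $\rho$--vs--$r$ split at all, using that $\phi,\nabla\phi\in C_0$ vanish on $\partial B_1$, so that $F(\tilde x,y)\lesssim(1-|\tilde x|)^2|\tilde x-y|^{-d-2s}$ while $\rho\gtrsim 1-|\tilde x|$ cuts off the singular kernel. Either way this is a non-trivial step that your argument omits, and as written the proof is incomplete.

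For the diagonal piece $B_1\times B_1$ your treatment is correct and in fact somewhat cleaner than the paper's: the plain mean--value bound suffices in the small-$\rho$ region (the paper uses a fractional H\"older step with an auxiliary exponent $\alpha$ and a geometric inequality, which in the end gives the same power $r^{2-d-2s-2\sigma}$), and your relabeling $z=\tilde x$ in the large-$\rho$ region is a neat substitute for the H\"older inequality with exponents $p,q$ that the paper uses there.
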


\begin{proof}
For  $x = (x',x_d) \in \R^{d-1} \times \R$ and $y = (y',y_d) \in \R^{d-1} \times
\R$ put
$$
F_{x_d,y}(x') \, = \, \frac{\lk \phi(x',x_d) - \phi(y',y_d)\rk^2}{\lk |x'-y'|^2 + (x_d-y_d)^2\rk^{d/2+s}} \, .
$$
To establish (\ref{derivatives}) we use the fact that
\begin{equation}
\label{der}
\left| ( -\Delta_{x'})^\sigma  \frac{|\phi(x) - \phi(y)|^2}{|x-y|^{d+2s}} \right| \, \leq \, C \int_{\R^{d-1}}  \frac{| F_{x_d,y}(x') - F_{x_d,y}(z')|}{|x'-z'|^{d-1+2\sigma}} \, dz'
\end{equation}
and split the integration in $x \in \R^d$ and $y \in \R^d$ in four parts. First we assume that $x$ and $y$ are in $B_1$. Then we have to show that
\begin{align}
\nonumber
& \int_{B_1} \int_{B_1} \int_{\R^{d-1}}  \frac{| F_{x_d,y}(x') - F_{x_d,y}(z')|}{|x'-z'|^{d-1+2\sigma}} \, dz' \, dx \, dy\, =  \\
\nonumber
& \, \int_{B_1} \int_{B_1} \int_{|x'-z'|<|x-y|/2}  \frac{| F_{x_d,y}(x') - F_{x_d,y}(z')|}{|x'-z'|^{d-1+2\sigma}} \, dz' \, dx \, dy \\
\label{fint}
& + \int_{B_1} \int_{B_1} \int_{|x'-z'|\geq |x-y|/2}  \frac{| F_{x_d,y}(x') - F_{x_d,y}(z')|}{|x'-z'|^{d-1+2\sigma}} \, dz' \, dx \, dy
\end{align}
is bounded from above.

To estimate the first integral over $|x'-z'|<|x-y|/2$ we use the fact that
$$
F(z')-F(x') \, = \, \sum_{j=1}^{d-1} \frac{(z_j-x_j)}{|x'-z'|} \int_0^{|x'-z'|} (\partial_j F) \lk x'+t \frac{(z'-x')}{|x'-z'|} \rk \, dt \, .
$$
For $j = 1, \dots, d-1$ we have
$$
(\partial_j F_{x_d,y})(x') \, = \, \frac{2(\phi(x',x_d)-\phi(y)) (\partial_j \phi(x))}{|x-y|^{d+2s}} - (d+2s) (x_j-y_j) \frac{(\phi(x)-\phi(y))^2}{|x-y|^{d+2s+2}} \, ,
$$
thus 
$$
|(\partial_j F_{x_d,y})(x') | \leq C \, |x-y|^{-d+1-2s} \, .
$$
Hence, we obtain
\begin{align}
\nonumber
& |F_{x_d,y}(z')-F_{x_d,y}(x')| \\
\label{fdiff}
& \leq C |x'-z'|^\alpha \lk  \int_0^{|x'-z'|} \lk \left| x'+t \frac{(z'-x')}{|x'-z'|} - y' \right|^2 + (x_d-y_d)^2 \rk^{\beta} dt \rk^{1-\alpha} \, ,
\end{align}
with $0 < \alpha < 1$ and $\beta = (\frac{d-1}{2}+s)/(\alpha-1)$, by applying H\"older's inequality.
Note that
\begin{align*}
\left| x'-y'+t \frac{(z'-x')}{|x'-z'|} \right|^2 +(x_d-y_d)^2 &=  |x-y|^2 + t^2 + 2t \frac{(x'-y') \cdot (z'-x')}{|x'-z'|} \\
&\geq \lk |x-y|-t \rk^2 \, .
\end{align*}
Inserting this into (\ref{fdiff}) we get for $|x'-z'| < |x-y|/2$
\begin{align*}
|F_{x_d,y}(z')-F_{x_d,y}(x')|  &\leq  C |x'-z'|^\alpha \lk  \int_0^{|x-y|/2}  (|x-y|-t)^{2\beta} dt \rk^{1-\alpha} \\
&\leq C |x'-z'|^\alpha |x-y|^{(2\beta+1)(1-\alpha)} \, ,
\end{align*}
where $(2\beta+1)(1-\alpha) = -d-2s+2-\alpha$. We conclude that for any $2\sigma < \alpha < 1$ and $\sigma < 1-s$
\begin{align}
\nonumber
& \int_{B_1} \int_{B_1} \int_{|x'-z'|<|x-y|/2}  \frac{| F_{x_d,y}(x') - F_{x_d,y}(z')|}{|x'-z'|^{d-1+2\sigma}} \, dz' \, dx \, dy \\
\nonumber
& \leq C \int_{B_1} \int_{B_1} \int_{|x'-z'|<|x-y|/2} |x'-z'|^{-d+1-2\sigma+\alpha} dz' \, |x-y|^{-d-2s+2-\alpha} \,  dx \, dy \\
\label{firstint}
&\leq C \, .
\end{align}

Now we turn to the second integral in (\ref{fint}) over $|x'-z'|\geq|x-y|/2$. Since 
\begin{equation}
\label{fest}
0 \leq F_{x_d,y}(x') \leq |x-y|^{-d-2s+2}
\end{equation}
and $\sigma < 1-s$ we have
\begin{equation}
\label{secondint1}
\int_{B_1} \int_{B_1} \int_{|x'-z'|\geq |x-y|/2} \frac{F_{x_d,y}(x')}{|x'-z'|^{d-1+2\sigma}} dz'  dx  dy 
\leq C  \int_{B_1} \int_{B_1} \frac{1}{|x-y|^{d+2s-2+2\sigma}}  \leq  C  .
\end{equation}
Moreover,
$$
\int_{|x'-z'|\geq |x-y|/2} \frac{F_{x_d,y}(z')}{|x'-z'|^{d-1+2\sigma}}  dz'  \leq  C  |x-y|^{-d+1-2\sigma+(d-1)/p} \lk \int_{|x'-z'|\geq |x-y|/2} F_{x_d,y}^q(z')  dz' \rk^{1/q}  
$$
with $\frac 1p + \frac 1q = 1$, by H\"older's inequality. Since $\sigma > \frac 12 -s$  we can choose $p > \frac{d-1}{2\sigma}$ and $q > \frac{d-1}{d+2s-2}$. By (\ref{fest}), we have
\begin{align*}
\lk \int_{|x'-z'|\geq |x-y|/2} F_{x_d,y}^q(z')  dz' \rk^{1/q} \, & \leq \, C \lk \int_{\R^{d-1}} \lk |z'-y'|^2 + (x_d-y_d)^2 \rk^{-q(d/2+s-1)} dz' \rk^{1/q} \\
& \leq C \, |x_d-y_d|^{-d-2s+2+(d-1)/q} \, .
\end{align*}
It follows that
\begin{align*}
&\int_{B_1} \int_{B_1} \int_{|x'-z'|\geq |x-y|/2} \frac{F_{x_d,y}(z')}{|x'-z'|^{d-1+2\sigma}} \, dz'  \, dx \, dy \\
& \leq \, C \int_{B_1} \int_{B_1}  |x-y|^{-d+1-2\sigma+(d-1)/p} \, |x_d-y_d|^{-d-2s+2+(d-1)/q} \, dx \, dy \\
& \leq \, C \int_0^2 t^{-d-2s+2+(d-1)/q} \int_0^2 r^{d-2} \lk  r^2 + t^2 \rk^{(-d+1-2\sigma)/2+(d-1)/(2p)} dr \, dt \, ,
\end{align*}
where we substituted $t = |x_d-y_d|$ and $r = |x'-y'|$. Since $p > \frac{d-1}{2\sigma}$ and $\sigma < 1-s$ we find
\begin{equation}
\label{secondint2}
\int_{B_1} \int_{B_1} \int_{|x'-z'|\geq |x-y|/2} \frac{F_{x_d,y}(z')}{|x'-z'|^{d-1+2\sigma}} \, dz'  \, dx \, dy \, \leq \, C \int_0^2 t^{1-2s-2\sigma} dt \, \leq \, C \, .
\end{equation}
The estimates (\ref{secondint1}) and (\ref{secondint2}) show that 
\begin{equation}
\label{secondint}
\int_{B_1} \int_{B_1} \int_{|x'-z'|\geq |x-y|/2}  \frac{| F_{x_d,y}(x') - F_{x_d,y}(z')|}{|x'-z'|^{d-1+2\sigma}} \, dz' \, dx \, dy  \, \leq \, C 
\end{equation}
and from (\ref{der}), (\ref{firstint}), and (\ref{secondint}) it follows that
$$
\int_{B_1} \int_{B_1}  \left| ( -\Delta_{x'})^\sigma \frac{|\phi(x) - \phi(y)|^2}{|x-y|^{d+2s}} \right| dx dy \, \leq \, C \, .
$$

The proof that the respective integrals over $B_1 \times (\R^d\setminus B_1)$, $(\R^d \setminus B_1) \times B_1$, and $(\R^d \setminus B_1) \times (\R^d \setminus B_1)$ are finite is similar but easier, since $\textnormal{supp} \phi \subset B_1$ and we only have to handle one singularity at a time. 
\end{proof}


\subsection*{Acknowledgements}
The authors are grateful to R. Ba\~nuelos, M. Kwa\'snicki and B. Siudeja for helpful correspondence and to the anonymous referee for his/her help to improve the paper. This work is partially supported by NSF grants PHY-1068285 (R.L.F.) and PHY-1122309 (L.G.) and DFG grant GE 2369/1-1 (L.G.).


\end{document}